\newlength\friezelen
\newcolumntype{Q}{>{\centering}p{\friezelen}<{}}
\newcommand{\til}[0]{\widetilde{\theta}}
\tikzset {->-/.style={decoration={markings, mark=at position .5 with {\arrow{latex}}}, postaction={decorate}}}
\newcommand{\midarrow}{\tikz \draw[-triangle 90] (0,0) -- +(.1,0);}
\numberwithin{equation}{section}
\newtheorem{Theorem}{Theorem}[section]
\newtheorem{Corollary}[Theorem]{Corollary}
\newtheorem{Lemma}[Theorem]{Lemma}
\newtheorem{Proposition}[Theorem]{Proposition}
 {\theoremstyle{Definition}
\newtheorem{Definition}[Theorem]{Definition}
\newtheorem{Example}[Theorem]{Example}
\newtheorem{Remark}[Theorem]{Remark}
\newtheorem{Question}[Theorem]{Question} }
\DeclareMathOperator{\wt}{wt}
\DeclareMathOperator{\inv}{inv}
\newcommand{\s}[1]{{\color{red}#1}}
\renewcommand{\phi}{\varphi}
\renewcommand{\l}[2]{\lambda_{#1#2}}
\begin{document}

\newcommand{\arXivNumber}{2102.09143}

\renewcommand{\PaperNumber}{080}

\FirstPageHeading

\ShortArticleName{An Expansion Formula for Decorated Super-Teichm\"uller Spaces}

\ArticleName{An Expansion Formula\\ for Decorated Super-Teichm\"uller Spaces}

\Author{Gregg MUSIKER, Nicholas OVENHOUSE and Sylvester W. ZHANG}
\AuthorNameForHeading{G.~Musiker, N.~Ovenhouse and S.W.~Zhang}
\Address{School of Mathematics, University of Minnesota, Minneapolis, MN 55455, USA}
\Email{\href{mailto:musiker@math.umn.edu}{musiker@umn.edu}, \href{mailto:ovenh001@umn.edu}{ovenh001@umn.edu}, \href{mailto:swzhang@umn.edu}{swzhang@umn.edu}}

\ArticleDates{Received March 31, 2021, in final form August 27, 2021; Published online September 01, 2021}

\Abstract{Motivated by the definition of super-Teichm\"uller spaces, and Penner--Zeitlin's recent extension of this definition to decorated super-Teichm\"uller space, as examples of super Riemann surfaces, we use the super Ptolemy relations to obtain formulas for super $\lambda$-lengths associated to arcs in a bordered surface. In the special case of a disk, we are able to give combinatorial expansion formulas for the super $\lambda$-lengths associated to diagonals of a polygon in the spirit of Ralf Schiffler's $T$-path formulas for type $A$ cluster algebras. We further connect our formulas to the super-friezes of Morier-Genoud, Ovsienko, and Tabachnikov, and obtain partial progress towards defining super cluster algebras of type $A_n$. In particular, following Penner--Zeitlin, we are able to get formulas (up to signs) for the $\mu$-invariants associated to triangles in a triangulated polygon, and explain how these provide a step towards understanding odd variables of a super cluster algebra.}

\Keywords{cluster algebras; Laurent polynomials; decorated Teichm\"uller spaces; supersymmetry}

\Classification{13F60; 17A70; 30F60}

\section{Introduction}
Cluster algebras were introduced in \cite{fz_02} as certain commutative algebras whose generators are defined by
a combinatorial recursive procedure called ``mutation''. They were originally conceived to study certain problems
in Lie theory and quantum groups, but have since found many surprising and deep connections to other areas of mathematics and physics.

In \cite{fst_08}, a class of cluster algebras was defined starting from the data of a surface with boundary, together with
a collection of punctures and marked points on the boundary. It was shown in \cite{gsv_05} and \cite{fg_06} that these
cluster algebras could be interpreted geometrically as functions on decorated Teichm\"{u}ller spaces.
Specifically, the cluster variables are coordinate functions known as \emph{$\lambda$-lengths} or \emph{Penner coordinates} \cite{penner_12}.

One of the earliest results in the subject is the \emph{Laurent phenomenon}, which says that all cluster variables
can be expressed as Laurent polynomials in terms of a fixed set of initial cluster variables. Over the years, several
explicit formulas have been given for these Laurent expressions in the case of cluster algebras coming from surfaces,
with the Laurent monomials being indexed by various combinatorial objects.
Some of these include Schiffler's ``$T$-paths'' \cite{schiffler_08}, perfect matchings of the snake graphs of Musiker, Schiffler, and Williams \cite{msw_11},
and Yurikusa's ``angle matchings'' \cite{yurikusa_19}.

Going beyond the commutative case, Berenstein and Zelevinsky defined quantum cluster algebras \cite{berenstein2005quantum}
where cluster variables (of the same cluster) quasi-commute with one another, meaning that exchanging the order of multiplication
in a cluster monomial could alter the expression by yielding a power of $q$ out in front of such a term.
More recently, Berenstein and Retakh \cite{berenstein2018noncommutative} defined in the case of surfaces a (completely) non-commutative
model of cluster variables and obtained non-commutative Laurent expansions analogous to $T$-paths.
Such non-commutative expressions could also be defined as quasi-Pl\"ucker coordinates.

Recently, Penner and Zeitlin defined the notion of the \emph{decorated super-Teichm\"{u}ller space} associated to a bordered marked surface \cite{pz_19}.
This work builds off of earlier work on super-Teichm\"uller spaces for super Riemann surfaces \cite{cr_88}.
The coordinates on a super-space are broken into two classes: namely even coordinates and odd coordinates.
Even coordinates are ordinary commutative variables but odd coordinates anti-commute with one another.
Odd coordinates are also commonly known as Grassmann variables. As in the classical commutative case, the coordinates correspond to arcs in a fixed triangulation of the surface.
They described a super version of the Ptolemy relation, which is an expression for how the coordinates change
when changing the choice of triangulation.

Our main result in this paper is an explicit formula for the super $\lambda$-lengths in the case of marked disks, generalizing the $T$-path formulation of Schiffler.
 Like Schiffler's formula, the terms in our formula are also indexed by objects which closely resemble the $T$-paths from the classical case.

\section{Decorated Teichm\"uller theory}

First we review some background on decorated Teichm\"{u}ller spaces. For a detailed reference, see~\cite{penner_12}.
Let $S$ be a surface with boundary, and let $p_1,\dots,p_n$ be a collection of marked points on the boundary,
such that each boundary component contains at least one marked point. More generally, we can also have a collection of interior marked points (or \emph{punctures}),
but we will not be concerned with this case in this paper. We also equip the surface with a triangulation, where the arcs terminate
at the marked boundary points. The \emph{Teichm\"{u}ller space} of~$S$, denoted~$\mathcal{T}(S)$, is the space of (equivalence classes of) hyperbolic metrics on $S$ with constant negative
curvature, with cusps at the marked points. Because of the cusps, any geodesic between marked points has infinite hyperbolic length.

The \emph{decorated Teichm\"{u}ller space} of $S$, written $\widetilde{\mathcal{T}}(S)$, is a trivial vector bundle over $\mathcal{T}(S)$,
with fiber ${\mathbb R}_{> 0}^n$. The fibers represent a choice of a positive real number associated to each marked point.
At each marked point, we draw a horocycle whose size (or \emph{height}) is determined by the corresponding positive number.
Truncating the geodesics using these horocycles, it now makes sense to talk about their lengths. If $\ell$ is the truncated
length of one of these geodesic segments, then the $\lambda$-length (or \emph{Penner coordinate}) associated to that geodesic arc
is defined to be
\[ \lambda := \exp(\ell/2). \]

Fixing a triangulation of the marked surface, the collection of $\lambda$-lengths corresponding to the arcs in the triangulation (including segments
of the boundary) form a system of coordinates for~$\widetilde{\mathcal{T}}(S)$. Choosing a different triangulation results in a different
system of coordinates, but they are related by simple transformations which are a hyperbolic analogue of Ptolemy's theorem from classical Euclidean geometry.
If two triangulations differ by the flip of a single arc as in Figure~\ref{fig:ptolemy},
then the $\lambda$-lengths are related by $ef = ac + bd$.

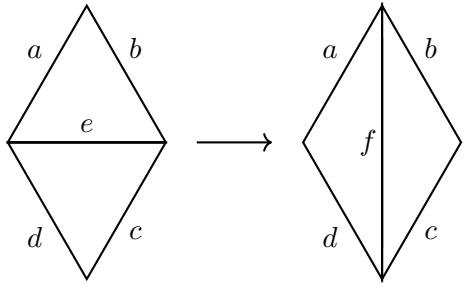
\begin{figure}[h!]
\centering

\begin{tikzpicture}[scale=0.7, baseline, thick]

    \draw (0,0)--(3,0)--(60:3)--cycle;
    \draw (0,0)--(3,0)--(-60:3)--cycle;

    \draw (0,0) -- (3,0);

    \draw node[above]      at (70:1.5){$a$};
    \draw node[above]      at (30:2.8){$b$};
    \draw node[below]      at (-30:2.8){$c$};
    \draw node[below=-0.1] at (-70:1.5){$d$};
    \draw node[above] at (1.5, 0){$e$};

    \draw node[left] at (0,0) {};
    \draw node[above] at (60:3) {};
    \draw node[right] at (3,0) {};
    \draw node[below] at (-60:3) {};

\end{tikzpicture}
\begin{tikzpicture}[baseline]
    \draw[->, thick](0,0)--(1,0);
    \node[above]  at (0.5,0) {};
\end{tikzpicture}
\begin{tikzpicture}[scale=0.7, baseline, thick,every node/.style={sloped,allow upside down}]
    \draw (0,0)--(60:3)--(-60:3)--cycle;
    \draw (3,0)--(60:3)--(-60:3)--cycle;

    \draw node[above]      at (70:1.5)  {$a$};
    \draw node[above]      at (30:2.8)  {$b$};
    \draw node[below]      at (-30:2.8) {$c$};
    \draw node[below=-0.1] at (-70:1.5) {$d$};
    \draw node[left]       at (1.6,0)   {$f$};

    \draw (1.5,-2) --  (1.5,2);

    \draw node[left] at (0,0) {};
    \draw node[above] at (60:3) {};
    \draw node[right] at (3,0) {};
    \draw node[below] at (-60:3) {};

\end{tikzpicture}
\caption{Ptolemy transformation.}\label{fig:ptolemy}
\end{figure}

\section[Laurent expression for lambda-lengths]{Laurent expression for $\boldsymbol{\lambda}$-lengths}

In this paper, we will only be concerned with the case that the surface $S$ is a disk with marked points on its boundary (which we will
picture as a convex polygon). So we restrict to that case now.

Fix a triangulation of an $n$-gon, and label the vertices $1$ through $n$.
Schiffler~\cite{schiffler_08} defined a~\emph{$T$-path} from $i$ to $j$ to be a sequence
\[ \alpha = \big(a_0,\dots,a_{\ell(\alpha)} \,|\, t_1,\dots,t_{\ell(\alpha)}\big) \]
such that
\begin{itemize}\itemsep=0pt
 \item[(T1)] $a_0,\dots,a_{\ell(\alpha)}$ are vertices of the polygon,
 \item[(T2)] $t_k$ is an arc in the triangulation connecting $a_{k-1}$ to $a_k$,
 \item[(T3)] no arc is used more than once,
 \item[(T4)] $\ell(\alpha)$ is odd,
 \item[(T5)] if $k$ is even, then $t_k$ crosses the arc connecting vertices $i$ and $j$,
 \item[(T6)] if $k < l$ and both $t_k$ and $t_l$ cross the arc from $i$ to $j$, then
 the point of intersection with $t_k$ is closer to $i$, and the point of intersection
 with $t_l$ is closer to $j$.
\end{itemize}

An example of a $T$-path in a hexagon is shown in Figure~\ref{fig:t_path}.
The even-numbered edges are colored blue, and the odd edges red, for emphasis.

\begin{figure}[h!] \centering

 \begin {tikzpicture}[scale=1.3]
 \coordinate (A) at (-1,0);
 \coordinate (B) at (-0.5,-0.866);
 \coordinate (C) at (0.5,-0.866);
 \coordinate (D) at (1,0);
 \coordinate (E) at (0.5,0.866);
 \coordinate (F) at (-0.5,0.866);

 \draw (A) -- (B) -- (C) -- (D) -- (E) -- (F) -- cycle;

 \draw (B) -- (F) -- (C) -- (E);

 \draw (A) node[left] {$i$};
 \draw (D) node[right] {$j$};

 \draw[red, line width = 1.2] (A) -- (B);
 \draw[red, line width = 1.2] (F) -- (E);
 \draw[red, line width = 1.2] (C) -- (D);

 \draw[blue, line width = 1.2] (B) -- (F);
 \draw[blue, line width = 1.2] (E) -- (C);
 \end {tikzpicture}

 \caption{A $T$-path from $i$ to $j$.} \label {fig:t_path}
\end{figure}
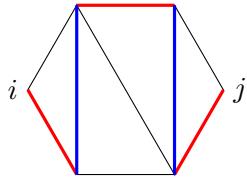

We denote the set of all $T$-paths from $i$ to $j$ by $T_{ij}$. Given a $T$-path $\alpha$,
we define a Laurent monomial $x_\alpha$ (in terms of the $\lambda$-lengths of a fixed triangulation) as
the product of the $\lambda$-lengths used in the $T$-path, with the even-numbered ones inverted. That is,
if the ordered sequence of edges in $\alpha$ is $e_1, e_2, \dots, e_{2m+1}$, then
\[ x_\alpha := \frac{\prod_{k=0}^m x_{e_{2k+1}}}{\prod_{k=1}^m x_{e_{2k}}}. \]

Schiffler proved the following theorem relating $\lambda$-lengths and $T$-paths.

\begin{Theorem}[{\cite[Theorem 1.2]{schiffler_08}}] Let $x_{ij}$ be the $\lambda$-length corresponding to the geodesic arc
 connecting vertices $i$ and $j$ in a triangulated polygon. Then
 \[ x_{ij} = \sum_{\alpha \in T_{ij}} x_\alpha. \]
\end{Theorem}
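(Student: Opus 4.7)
The plan is to argue by induction on $d$, the number of arcs of the fixed triangulation $T$ crossed by the diagonal from $i$ to $j$, using Ptolemy's relation as the reduction tool. In the base case $d = 0$, the arc $ij$ either lies on the boundary or already belongs to $T$, and the sole $T$-path is the single-edge path $(i,j \mid t_1 = ij)$ contributing the monomial $x_{ij}$, so the formula holds trivially.

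For the inductive step, let $(i,p,q)$ be the unique triangle of $T$ incident to $i$ whose opposite edge $pq$ is the first arc of $T$ crossed by $ij$ upon leaving $i$. The vertices $i, p, j, q$ then appear in this cyclic order along the polygon boundary, and Ptolemy's relation applied to the quadrilateral they form yields
\[
x_{ij} \ = \ \frac{x_{ip}\, x_{jq} + x_{iq}\, x_{jp}}{x_{pq}}.
\]
Because $ip$, $iq$, and $pq$ all lie in $T$, any arc of $T$ crossing $jp$ or $jq$ must also cross $ij$ -- otherwise it would be forced to cross one of these three triangulation arcs -- while $pq$ is crossed by $ij$ but shares an endpoint with each of $jp$ and $jq$. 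Hence $jp$ and $jq$ are crossed by strictly fewer than $d$ arcs of $T$, so the inductive hypothesis applies, and substituting the expansions of $x_{jp}$ and $x_{jq}$ expresses $x_{ij}$ as a sum of monomials $(x_{ip}/x_{pq})\, x_\gamma$ for $\gamma \in T_{jq}$ and $(x_{iq}/x_{pq})\, x_\beta$ for $\beta \in T_{jp}$.

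It remains to exhibit a monomial-preserving bijection between $T_{ij}$ and $T_{jq} \sqcup T_{jp}$. The basic construction: given $\gamma \in T_{jq}$, reverse it and prepend the two edges $t_1 = ip$ (odd, not crossing $ij$) and $t_2 = pq$ (even, crossing $ij$ at the point nearest $i$); axioms (T4)--(T6) are then immediate consequences of those for $\gamma$ together with the observation that all remaining crossings of $ij$ take place on the $j$-side of $pq$. The symmetric construction handles $\beta \in T_{jp}$, and every $\alpha \in T_{ij}$ decomposes this way because (T5)--(T6) and the choice of $(i,p,q)$ force its first two edges to be $(ip, pq)$ or $(iq, pq)$. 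The principal obstacle is axiom (T3): when $\gamma$ has terminal edge $pq$, naive prepending repeats this arc, and one must instead drop the terminal $pq$ and prepend only $t_1 = ip$, yielding a $T$-path of length $|\gamma|$ whose monomial still matches $(x_{ip}/x_{pq})\, x_\gamma$ thanks to the cancellation of the $x_{pq}$ factor in $x_\gamma$. Verifying that this refined correspondence is a genuine bijection -- partitioning $T_{ij}$ according to whether $t_2 = pq$ and preserving (T3)--(T6) in both cases -- is the technical heart of the proof.
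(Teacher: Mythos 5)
The paper does not prove this statement itself; it is quoted from Schiffler \cite{schiffler_08}, so there is no in-paper argument to compare against. Your strategy---induction on the number $d$ of crossings, reducing via the Ptolemy relation applied to the quadrilateral $ipjq$ whose diagonal $pq$ is the first arc of $T$ crossed by $ij$---is a sound reduction, and your observation that any arc of $T$ crossing $jp$ or $jq$ must also cross $ij$ (while $pq$ crosses neither) correctly delivers the strict drop in crossing number needed for the induction.

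That said, the bijection step, which you yourself flag as the technical heart, contains a false assertion and a real gap. The claim that (T5)--(T6) ``force the first two edges of $\alpha$ to be $(ip,pq)$ or $(iq,pq)$'' is wrong, and is even inconsistent with your own fix for the (T3) obstacle: when $\gamma$ ends in $pq$ you prepend only $ip$, which produces an $\alpha$ whose second edge is not $pq$. (Concretely, in a pentagon $12345$ triangulated by $(1,3),(1,4)$, with $i=2$, $j=5$, $p=3$, $q=1$, the path $(2,1,4,5)\in T_{2,5}$ begins $(2,1),(1,4)$.) Only the first edge is forced to be $ip$ or $iq$; whether the second edge is $pq$ is exactly what separates the two branches of your map. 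More substantively, your phrase ``axioms (T4)--(T6) are then immediate'' hides the actual content: one must verify that even-indexed edges of $\gamma$ cross $ij$ and not just $jq$ (you do address this), that $pq$ cannot reappear at an interior odd position of $\gamma$ (it cannot, but this needs an argument using (T5)--(T6)), and, decisively for (T6), that the linear order of intersection points along $jq$ read from $q$ to $j$ coincides with the linear order of the corresponding intersection points along $ij$ read from $i$ to $j$---a nesting argument around the common endpoint $j$ of the two arcs which you leave entirely implicit. None of these is an obstruction in principle, but as written the proof does not establish that the image of the proposed map actually lands in $T_{ij}$, and that is the whole theorem.
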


\begin{Remark} \label{rem:subtriang}\rm
As a consequence of the definition of $T$-paths, to compute $x_{ij}$ it is sufficient to consider only the sub-polygon consisting of the
triangles which the geodesic arc connecting vertices $i$ and $j$ crosses.
\end{Remark}

\section{Decorated super-Teichm\"{u}ller theory}\label{sec:dec_super}

Super-Teichm\"{u}ller spaces have been studied for several years now (see for example~\cite{cr_88}).
Recently, Penner and Zeitlin introduced a decorated version of super-Teichm\"{u}ller spaces~\cite{pz_19}.
It is generated by even variables corresponding to the
$\lambda$-lengths of a triangulation, as well as odd variables (called \emph{$\mu$-invariants}) corresponding to the triangles. They give
a super version of the Ptolemy relation, which reads as follows (see Figure~\ref{fig:super_ptolemy} for the meanings of the variables)
\begin{gather*}
 ef = (ac+bd)\left( 1+{{\sigma\theta\sqrt{\chi}}\over{1+\chi}}\right),\qquad
 \sigma' = {{\sigma-\sqrt{\chi}\theta}\over{\sqrt{1+\chi}}},\qquad
 \theta' = {{\theta+\sqrt{\chi}\sigma}\over{\sqrt{1+\chi}}},
\end{gather*}
where $ \chi = \frac{ac}{bd}$.

\begin{figure}[h!]
\centering

\begin{tikzpicture}[scale=0.7, baseline, thick]

    \draw (0,0)--(3,0)--(60:3)--cycle;
    \draw (0,0)--(3,0)--(-60:3)--cycle;

    \draw (0,0)--node {\midarrow} (3,0);

    \draw node[above]      at (70:1.5){$a$};
    \draw node[above]      at (30:2.8){$b$};
    \draw node[below]      at (-30:2.8){$c$};
    \draw node[below=-0.1] at (-70:1.5){$d$};
    \draw node[above] at (1,-0.12){$e$};

    \draw node[left] at (0,0) {};
    \draw node[above] at (60:3) {};
    \draw node[right] at (3,0) {};
    \draw node[below] at (-60:3) {};

    \draw node at (1.5,1){$\theta$};
    \draw node at (1.5,-1){$\sigma$};
\end{tikzpicture}
\begin{tikzpicture}[baseline]
    \draw[->, thick](0,0)--(1,0);
    \node[above]  at (0.5,0) {};
\end{tikzpicture}
\begin{tikzpicture}[scale=0.7, baseline, thick,every node/.style={sloped,allow upside down}]
    \draw (0,0)--(60:3)--(-60:3)--cycle;
    \draw (3,0)--(60:3)--(-60:3)--cycle;

    \draw node[above]      at (70:1.5)  {$a$};
    \draw node[above]      at (30:2.8)  {$b$};
    \draw node[below]      at (-30:2.8) {$c$};
    \draw node[below=-0.1] at (-70:1.5) {$d$};
    \draw node[left]       at (1.7,1)   {$f$};

    \draw (1.5,-2) --node {\midarrow} (1.5,2);

    \draw node[left] at (0,0) {};
    \draw node[above] at (60:3) {};
    \draw node[right] at (3,0) {};
    \draw node[below] at (-60:3) {};

    \draw node at (0.8,0){$\theta'$};
    \draw node at (2.2,0){$\sigma'$};
\end{tikzpicture}
\caption{Super Ptolemy transformation.}\label{fig:super_ptolemy}
\end{figure}
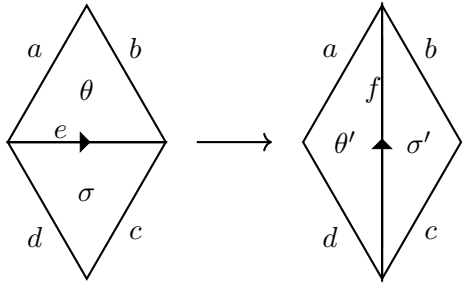

We will usually find it convenient to re-write these equations without $\chi$, as follows
\begin{gather}
 ef = ac+bd+\sqrt{abcd} \sigma\theta, \label{eq:mutation}\\
 \sigma' = {{\sigma\sqrt{bd}-\theta\sqrt{ac}}\over{\sqrt{ac+bd}}}, \label{eq:mu_mutation_neg} \\
 \theta' = {{ \theta\sqrt{bd}+\sigma \sqrt{ac}}\over{\sqrt{ac+bd}}}.\label{eq:mu_mutation_pos}
\end{gather}
An important corollary of these equations is the following
\begin{equation}\label{eq:mu_cross}
	\sigma\theta=\sigma'\theta',
\end{equation}
which will be used frequently in our proofs.

To understand the oriented arrows in Figure~\ref{fig:super_ptolemy} and the minus sign in equation~\eqref{eq:mu_mutation_neg},
we need the combinatorial data of a \emph{spin structure}. In~\cite{cr_07} and~\cite{cr_08}, an isomorphism was shown between
the set of equivalence classes of spin structures on a surface and the set of isomorphism classes of Kasteleyn orientations
of a~fatgraph spine of the surface. Dual to any fatgraph spine is a~triangulation of the surface, and so an orientation of
the fatgraph corresponds to an orientation of a~triangulation (requiring the dual edge to cross from left to right). For our
purposes a~\emph{spin structure} will be a~choice of orientation of the edges of a~triangulation, modulo a~certain equivalence relation,
which we now describe.

Fix a triangulation $\Delta$ and an orientation $\tau$ of the edges in that triangulation. For any triangle~$t$,
consider the transformation which reverses the orientation of the three sides of~$t$. Define an equivalence relation on orientations by declaring that $\tau \sim \tau'$ if they differ by a~sequence of these transformations. In~\cite{pz_19}, a spin
structure is represented combinatorially by an equivalence class of orientations.

In \cite{pz_19}, the authors did not consider surfaces with boundary, as we do here. Because the orientation of boundary segments
will play no role in our formulas for $\lambda$-lengths (but not $\mu$-invariants), we can often ignore the boundary orientations.
Note that the hypothesis of Proposition~\ref{prop:equiv_rel} on the triangulation is sufficient because of Remark~\ref{rem:subtriang}.

\begin {Proposition} \label{prop:equiv_rel}
 Fix a triangulation of a polygon in which every triangle has at least one boundary edge. Then there is a unique spin structure after ignoring the boundary edges. In particular, this means that, from any representative orientation of a fixed spin structure, one can obtain all other orientations on the interior diagonals, without leaving this spin structure.
\end {Proposition}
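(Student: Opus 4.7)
The plan is to translate the hypothesis into a statement about the dual graph of the triangulation, and then carry out an elementary telescoping argument. The dual graph has one vertex per triangle and one edge per interior diagonal; it is a tree on $n-2$ vertices with $n-3$ edges. The degree of a triangle in this tree equals the number of its sides that are interior diagonals, so the hypothesis that every triangle has at least one boundary edge is exactly the statement that the dual tree has maximum degree $2$, i.e., it is a path. I would therefore label the triangles $T_1,T_2,\ldots,T_{n-2}$ in the order they appear along this path, and write $d_1,\ldots,d_{n-3}$ for the interior diagonals, where $d_i$ is the common edge of $T_i$ and $T_{i+1}$. With the convention that $d_0$ and $d_{n-2}$ are empty, the triangle-reversal move at $T_i$ toggles on the interior edges precisely the set $\{d_{i-1},d_i\}$; the leaf triangles $T_1$ and $T_{n-2}$ each toggle a single interior edge, and the remaining boundary edges are ignored.

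The proposition then reduces to proving that, modulo the triangle-flip relation, the orientation of each single interior diagonal $d_k$ can be reversed independently of the others. For this I would compose the flips of $T_1,T_2,\ldots,T_k$ in order. Working over $\mathbb F_2$ on the set of interior diagonals, the total effect is
\[
\sum_{i=1}^{k}(d_{i-1}+d_i)=d_0+2(d_1+\cdots+d_{k-1})+d_k=d_k,
\]
so every intermediate $d_i$ with $i<k$ is toggled exactly twice and returns to its original orientation, while $d_k$ is toggled exactly once. The base case $k=1$ is just the single flip of the leaf $T_1$. Hence each elementary reversal of a single interior diagonal is realized by a sequence of triangle flips, i.e., by an element of the spin-structure equivalence relation.

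Since these elementary reversals generate all orientation changes on the interior diagonals, any two orientations of the interior edges can be connected by a sequence of triangle flips; equivalently, after ignoring boundary edges there is a single equivalence class, which is the uniqueness claim. The ``in particular'' clause is then immediate: starting from any fixed representative $\tau$, one can reach any prescribed orientation on the interior diagonals by concatenating the telescoping sequences above, all while staying inside the equivalence class of $\tau$. I do not foresee a substantive obstacle; the only delicate point is bookkeeping at the two leaves of the dual path, where the toggling pattern is shorter by one edge, but this is handled automatically by the $d_0=d_{n-2}=\emptyset$ convention used above.
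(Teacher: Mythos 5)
Your proof is correct and is essentially the same as the paper's: the paper sequences the triangles left-to-right and inductively shows that one can flip just $d_k$ by reversing the orientations around $t_k$ and then undoing the change to $d_{k-1}$, which unrolls to precisely your telescoping sequence of flips on $T_1,\dots,T_k$. Your dual-graph/path framing and the explicit $\mathbb F_2$ telescoping calculation are a slightly more structured packaging of the same induction, but the underlying idea and flip sequence are identical.
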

\begin {proof}
 Because every triangle has at least one boundary edge, one can naturally sequence the triangles (and internal diagonals) from left-to-right.
 We may picture the polygon as follows, to emphasize this:
 \begin{center}
 \begin{tikzpicture}
 \draw (0,0) -- (0,1) -- (5,1) -- (5,0) -- cycle;

 \draw (0,1) -- (1,0) -- (1,1);
 \draw (1,0) -- (2,1);

 \draw (2,1) -- (2,0);
 \draw (2,1) -- (3,0);
 \draw (2,1) -- (4,0);

 \draw (4,0) -- (4,1);
 \draw (4,0) -- (5,1);
 \end{tikzpicture}
 \end{center}
 We will demonstrate that we may change the orientation of a single edge while remaining in the same equivalence class. Label
 the triangles from left to right as $t_1,t_2,\dots$ and the internal diagonals as $d_1,d_2,\dots$. We will argue that we can change
 the orientation of just $d_k$, by induction on $k$.

 If $k=1$, then $d_1$ is an edge of $t_1$, and the other two edges of $t_1$ are on the boundary. Because we ignore boundary orientations,
 the equivalence relation allows us to reverse the orientation of just $d_1$.

 Now, for arbitrary $d_k$, reverse the orientations around triangle~$t_k$. This affects both $d_k$ and~$d_{k-1}$. But by induction, we may
 change just $d_{k-1}$ while staying in the equivalence class. This proves the claim.
\end {proof}

In Figure~\ref{fig:super_ptolemy}, the arrows on the edges labelled $e$ and $f$ represent the choice of orientation. In the figure,
the orientations of the edges around the boundary of the quadrilateral are not indicated. Three of the four edges are unchanged
in the super Ptolemy transformation, and only the orientation of the edge labelled~$b$ is changed, see Figure~\ref{fig:flip_spin}. The sign in equation~\eqref{eq:mu_mutation_neg} is determined by the relative positions of the triangular faces with respect to the chosen orientation.

\begin{figure}[h!]

\centering

\begin{tikzpicture}[scale=0.7, baseline, thick]

\draw (0,0)--(3,0)--(60:3)--cycle;

\draw (0,0)--(3,0)--(-60:3)--cycle;

\draw (0,0)--node {\midarrow} (3,0);

\draw node[above] at (70:1.5){$\epsilon_a$};

\draw node[above] at (30:2.8){$\epsilon_b$};

\draw node[below] at (-30:2.8){$\epsilon_c$};

\draw node[below=-0.1] at (-70:1.5){$\epsilon_d$};


\draw node[left] at (0,0) {};

\draw node[above] at (60:3) {};

\draw node[right] at (3,0) {};

\draw node[below] at (-60:3) {};

\draw node at (1.5,1){$\theta$};

\draw node at (1.5,-1){$\sigma$};

\end{tikzpicture}
\begin{tikzpicture}[baseline]

\draw[->, thick](0,0)--(1,0);

\node[above]  at (0.5,0) {};

\end{tikzpicture}
\begin{tikzpicture}[scale=0.7, baseline, thick,every node/.style={sloped,allow upside down}]
\draw (0,0)--(60:3)--(-60:3)--cycle;
\draw (3,0)--(60:3)--(-60:3)--cycle;

\draw node[above] at (70:1.5){$\epsilon_a$};

\draw node[above] at (30:2.8){$-\epsilon_b$};

\draw node[below] at (-30:2.8){$\epsilon_c$};

\draw node[below=-0.1] at (-70:1.5){$\epsilon_d$};

\draw (1.5,-2) --node {\midarrow} (1.5,2);


\draw node[left] at (0,0) {};

\draw node[above] at (60:3) {};

\draw node[right] at (3,0) {};

\draw node[below] at (-60:3) {};

\draw node at (0.8,0){$\theta'$};

\draw node at (2.2,0){$\sigma'$};

\end{tikzpicture}
\begin{tikzpicture}[baseline]

\draw[->, thick](0,0)--(1,0);

\node[above]  at (0.5,0) {};

\end{tikzpicture}
\begin{tikzpicture}[scale=0.7, baseline, thick]

\draw (0,0)--(3,0)--(60:3)--cycle;

\draw (0,0)--(3,0)--(-60:3)--cycle;

\draw (3,0) --node {\reflectbox{\midarrow}} (0,0);

\draw node[above] at (70:1.5){$-\epsilon_a$};

\draw node[above] at (30:2.8){$-\epsilon_b$};

\draw node[below] at (-30:2.8){$\epsilon_c$};

\draw node[below=-0.1] at (-70:1.5){$\epsilon_d$};


\draw node[left] at (0,0) {};

\draw node[above] at (60:3) {};

\draw node[right] at (3,0) {};

\draw node[below] at (-60:3) {};

\draw node at (1.5,1){$-\theta$};

\draw node at (1.5,-1){$\sigma$};

\end{tikzpicture}
\caption{Flip effect on spin structures. Here $\epsilon_x$ denotes the orientation of an edge $x$.}
\label{fig:flip_spin}
\end{figure}
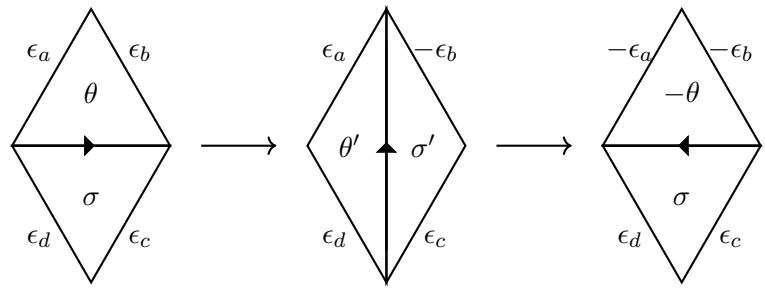

As illustrated in Figure~\ref{fig:flip_spin}, the super Ptolemy relation is \emph{not} an involution. Performing a flip
twice results in reversing the orientations around the top triangle. This leads us to the following observation that $\mu$-invariants are well-defined only up to sign. Consequently, in the main results of our paper regarding $\mu$-invariants (Theorems~\ref{thm:mu_single_fan} and~\ref{thm:proof_zigzag}), we must specify a mutation sequence to obtain a formula for the $\mu$-invariants.

\begin{Remark} \label{rem:mu-invariants}\rm
The above equivalence relation, i.e., as used in Proposition~\ref{prop:equiv_rel}, guarantees that the result after flipping twice represents the same spin structure, but algebraically it has the effect of negating the $\mu$-invariant of that triangle
($\theta \mapsto -\theta$ in the figure). This means that the specific $\mu$-invariants are not a feature of the triangulation
and spin structure alone, but also the choice of representative orientation. Choosing a different orientation corresponds to
changing the sign of some of the $\mu$-invariants.
\end{Remark}

\begin{Remark} \label{rem:lambda}\rm
Unlike the $\mu$-invariants, the expressions of $\lambda$-lengths in terms of an initial triangulation (as will be described more fully in Corollary~\ref{cor:laurent}) are independent of the orientation of the arc as part of a spin structure, and of the flip sequence used to obtain a triangulation containing that arc. This is proven in \cite{pz_19} in the case of surfaces without boundaries. The fact that $\lambda$-lengths are well-defined can also be seen as a consequence of the well-definedness of super-freizes, as we will describe in Sections~\ref{sec:super-friezes} and~\ref{sec:subfrieze}, based on \cite{M-GOT15,pz_19}. We provide a more direct proof in Appendix~\ref{appendix}.
\end{Remark}

\section[Super T-paths]{Super $\boldsymbol{T}$-paths}
Let $P$ be an $(n+3)$-gon (a disk with $n+3$ marked points on the boundary), and $T = \{t_1,\dots,t_{2n+3}\}$ the set of arcs in some triangulation.
We will denote by $V_0$ the set of vertices (marked points).
Let~$a$ and~$b$ be two non-adjacent vertices on the boundary and let $(a,b)$ be the arc that connects~$a$ and~$b$.

\subsection{Fans of a triangulation and their centers}

We call a triangulation a \emph{fan} if all the internal diagonals meet at a common vertex.
We will define a canonical way to break any triangulation $T$ of $P$ into smaller polygons with fan triangulations.
For this purpose, certain vertices in $P$ will be distinguished as \emph{centers of fans}.

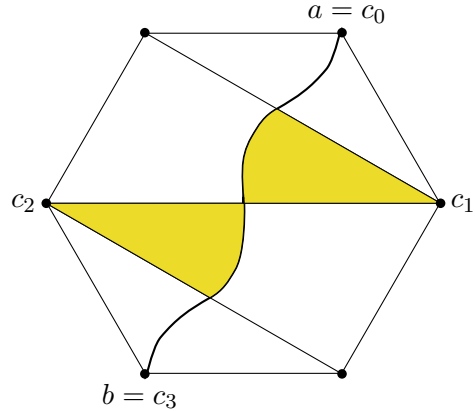
\begin{figure}[h]
	\centering
	\begin{tikzpicture}[scale=1.1]
	\tikzstyle{every path}=[draw] 
		\path
    node[
      regular polygon,
      regular polygon sides=6,
      draw,
      inner sep=1.6cm,
    ] (hexagon) {}
    %
    (hexagon.corner 1) node[above] {$\ a=c_0$}
    (hexagon.corner 3) node[left] {$c_2$}
    (hexagon.corner 4) node[below] {$b=c_3\ $}
    (hexagon.corner 6) node[right] {$c_1$}

  ;
  \coordinate (m1) at (1,1.62);
  \coordinate (m2) at (0.1,0.8);
  \coordinate (m3) at (-0.1,-0.8);
  \coordinate (m4) at (-1,-1.62);
  
  \draw [name path = m ,thin,opacity=0] (hexagon.corner 1) to [out=90,in=50] (m1) to [out=-180+50,in=60] (m2) to [out=-180+60,in=60] (m3) to [out=-180+60,in=50] (m4) to [out=-180+50,in=-180+90](hexagon.corner 4);%
  
  \draw [name path = L1] (hexagon.corner 6) to (hexagon.corner 2);
  \draw [name path = L2] (hexagon.corner 6) to (hexagon.corner 3);
  \draw [name path = L3] (hexagon.corner 5) to (hexagon.corner 3);
  
  \path [name intersections={of= m and L1,by=X1}];
  \path [name intersections={of= m and L2,by=X2}];
  \path [name intersections={of= m and L3,by=X3}];
  
  \draw [name path = m ,thick, black] (hexagon.corner 1) to [out=90,in=50] (m1) to [out=-180+50,in=30] (X1) to [out=-180+30,in=60] (m2) to [out=-180+60,in=90] (X2) to [out=90,in=60] (m3) to [out=-180+60,in=30] (X3) to [out=-180+30,in=50] (m4) to [out=-180+50,in=-180+90](hexagon.corner 4);

  \draw [fill=yellow!90!black] (X1) to [out=-180+30,in=60] (m2) to [out=-180+60,in=90] (X2) to (hexagon.corner 6) to (X1);
  
  \draw[fill=yellow!90!black](X2) to [out=90,in=60] (m3) to [out=-180+60,in=30] (X3) to (hexagon.corner 3)--(X2);

  

  \draw (hexagon.corner 1) node [fill,circle,scale=0.35] {};
  \draw (hexagon.corner 2) node [fill,circle,scale=0.35] {};
  \draw (hexagon.corner 3) node [fill,circle,scale=0.35] {};
  \draw (hexagon.corner 4) node [fill,circle,scale=0.35] {};
  \draw (hexagon.corner 5) node [fill,circle,scale=0.35] {};
  \draw (hexagon.corner 6) node [fill,circle,scale=0.35] {};
  
  \end{tikzpicture}
		\caption{Centers of fan segments.}
		\label{fig:fan_centers}
	\end{figure}

Let $P$ be a polygon and $T$ a triangulation. Following Remark~\ref{rem:subtriang}, without loss of generality, we may assume that
$(a,b)$ crosses all internal diagonals of $T$.
Consider the intersection of $(a,b)$ with triangles in $T$ which do not contain $a$ or $b$.
These intersections create small triangles (colored yellow in Figure~\ref{fig:fan_centers}) whose vertices in $P$ we call \emph{fan centers}.
We set $a=c_0$ and $b=c_{N+1}$ and as a convention we will name these centers $c_1,\dots,c_N$ such that
\begin{enumerate}\itemsep=0pt
 \item[1)] for $1\leq i\leq N-1$, the edge $(c_i,c_{i+1})$ is in $T$ which crosses $(a,b)$,
 \item[2)] the intersection $(c_i,c_{i+1}) \cap (a,b)$ is closer to $a$ than $(c_j,c_{j+1}) \cap (a,b)$ if $i<j$.
\end{enumerate}

Now the edges $(c_i,c_{i+1})$ naturally break the triangulation $T$ into $N$ smaller polygons, each of which
comes with an induced fan triangulation. Let $F_i$ denote the subgraph of $T$ bounded by $c_{i-1}$, $c_{i}$ and $c_{i+1}$,
which are called the \emph{fan segments} of $T$. We say that $c_i$ is the \emph{center} of $F_i$.
 See Figure~\ref{fig:bigger_auxiliary_graph} for an illustration, where the fan segments are indicated by different colors.

\subsection{The auxiliary graph}

We shall now define an auxiliary graph associated to $(T,a,b)$, which will be used to define the super $T$-paths from $a$ to $b$.
	
For a triangulation $T$ and a pair of vertices $a$ and $b$, we define the graph $\Gamma_T^{a,b}$ to be the graph of the triangulation $T$ with some additional vertices and edges.
\begin{enumerate}\itemsep=0pt
 \item For each face of the triangulation $T$, we place an internal vertex, which lies on the arc $(a,b)$.
 We denote the internal vertices $V_1=\{\theta_1,\dots,\theta_{n+1}\}$, such that~$\theta_i$ is closer to $a$ than~$\theta_j$ if and only if $i<j$.

 \item For each face of~$T$, we add an edge $\sigma_i := (\theta_i, c_j)$ connecting the internal vertex $\theta_i$ to the center of the fan segment which contains~$\theta_i$.
 We denote by~$\sigma$ the set of all such edges.

 \item For each $\theta_i$ and $\theta_j$ with $i<j$, we add an edge connecting $\theta_i$ and $\theta_j$.
 We denote the collection of these edges as $\tau = \{\tau_{ij}\colon i<j\}$. For simplicity the $\tau$-edges are drawn to be overlapping.
\end{enumerate}
See Figure~\ref{fig:auxiliary_graph} for example.
\begin{figure}[h]
	\centering
	\begin{tikzpicture}[scale=1]
	\tikzstyle{every path}=[draw] 
		\path
    node[
      regular polygon,
      regular polygon sides=6,
      draw,
      inner sep=1.6cm,
    ] (hexagon) {}
    %
    (hexagon.corner 1) node[above] {$\ \ a$}
    (hexagon.corner 2) node[above] {$x\ \ $}
    (hexagon.corner 3) node[left ] {$c_2$}
    (hexagon.corner 4) node[below] {$b\ \ $}
    (hexagon.corner 5) node[below] {$\ \ y$}
    (hexagon.corner 6) node[right] {$c_1$}

  ;
  \coordinate (m1) at (1,1.62);
  \coordinate (m2) at (0.1,0.8);
  \coordinate (m3) at (-0.1,-0.8);
  \coordinate (m4) at (-1,-1.62);
  
  \draw [name path = m ,opacity=1] (m1) to [out=-180+50,in=60] (m2) to [out=-180+60,in=60] (m3) to [out=-180+60,in=50] (m4);%
  
  \draw [name path = L1] (hexagon.corner 6) to (hexagon.corner 2);
  \draw [name path = L2] (hexagon.corner 6) to (hexagon.corner 3);
  \draw [name path = L3] (hexagon.corner 5) to (hexagon.corner 3);

  \draw (m1) node [fill,circle,scale=0.3] {};
  \draw (m2) node [fill,circle,scale=0.3] {};
  \draw (m3) node [fill,circle,scale=0.3] {};
  \draw (m4) node [fill,circle,scale=0.3] {};
  
  \node[left] at (m1) {$\theta_1$};
  \node[left] at (m2) {$\theta_2$};
  \node[right] at (m3) {$\theta_3$}; 
  \node[left] at (m4) {$\theta_4$};
  
  \node[right] at (1.2,1.4) {$\sigma_1$};
  \node[right] at (0.4,0.4) {$\sigma_2$};
  \node[right] at (-1,-0.4) {$\sigma_3$};
  \node[right] at (-1.7,-1) {$\sigma_4$};

  \draw (hexagon.corner 1) node [fill,circle,scale=0.35] {};
  \draw (hexagon.corner 2) node [fill,circle,scale=0.35] {};
  \draw (hexagon.corner 3) node [fill,circle,scale=0.35] {};
  \draw (hexagon.corner 4) node [fill,circle,scale=0.35] {};
  \draw (hexagon.corner 5) node [fill,circle,scale=0.35] {};
  \draw (hexagon.corner 6) node [fill,circle,scale=0.35] {};
  
  \draw [] (m1)--(hexagon.corner 6);
  \draw [] (m2)-- (hexagon.corner 6);
  \draw [] (m3)--(hexagon.corner 3);
  \draw [] (m4)-- (hexagon.corner 3);

	\end{tikzpicture}
		\caption{The auxiliary graph $\Gamma_T^{a,b}$.}
		\label{fig:auxiliary_graph}
	\end{figure}
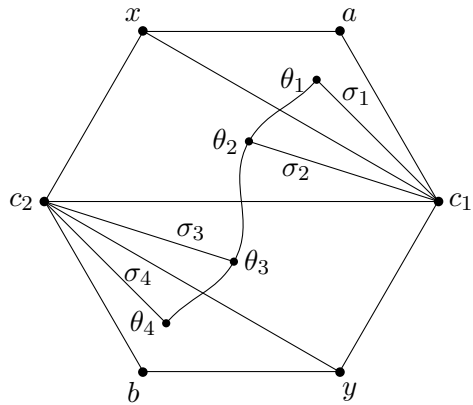

\begin{Remark}\rm
 Note that the arc $(a,b)$ divides the first and last triangle into two triangles, as opposed to into a triangle and a quadrilateral
 like the case of every other triangle of~$T$. Consequently, the convention of the yellow coloring as in Figure~\ref{fig:fan_centers}
 can be extended to the first and last triangles of $T$ in multiple ways.
 Thus we may define the first and last $\sigma$-edge in a~different way: $\sigma_1=(\theta_1,x)$ and $\sigma_4=(\theta_4,y)$ in Figure~\ref{fig:auxiliary_graph},
 in which case the (not yet defined) super $T$-paths produces the same weight. We make this choice for sake of consistency of doing induction.
 This means that when $P$ is a quadrilateral, we can view its triangulation~$T$ as either a single fan or having 2 fans, and define the auxiliary graph in $4$ different ways.
\end{Remark}
In Figure~\ref{fig:bigger_auxiliary_graph}, we give another example of constructing the auxiliary graph.

\begin{figure}[h!]
\centering
 \begin{tikzpicture}[decoration={
    markings,
    mark=at position 0.5 with {\arrow{>}}},
    scale=0.8
    ] 

\tikzstyle{every path}=[draw] 
		\path
    node[
      regular polygon,
      regular polygon sides=12,
      draw,
      inner sep=2cm,
    ] (hexagon) {}

    (hexagon.corner 1) node[above] {$c_0$}
    (hexagon.corner 2) node[above] {}
    (hexagon.corner 3) node[left] {}
    (hexagon.corner 4) node[left] {$c_2$}
    (hexagon.corner 5) node[below] {}
    (hexagon.corner 6) node[right] {}
    (hexagon.corner 7) node[below] {$c_4$}
    (hexagon.corner 8) node[below] {$c_5$}
    (hexagon.corner 9) node[left] {}
    (hexagon.corner 10) node[right] {$c_3$}
    (hexagon.corner 11) node[below] {}
    (hexagon.corner 12) node[right] {$c_1$}
;

\coordinate (m 1) at  ($0.38*(hexagon.corner 2)+0.38*(hexagon.corner 1)+0.24*(hexagon.corner 12)$);
\coordinate (m 2) at  ($0.28*(hexagon.corner 2)+0.28*(hexagon.corner 3)+0.44*(hexagon.corner 12)$);
\coordinate (m 3) at  ($0.245*(hexagon.corner 4)+0.245*(hexagon.corner 3)+0.51*(hexagon.corner 12)$);
\coordinate (m 4) at  ($0.52*(hexagon.corner 4)+0.24*(hexagon.corner 11)+0.24*(hexagon.corner 12)$);
\coordinate (m 5) at  ($0.56*(hexagon.corner 4)+0.22*(hexagon.corner 11)+0.22*(hexagon.corner 10)$);
\coordinate (m 6) at  ($0.27*(hexagon.corner 4)+0.27*(hexagon.corner 5)+0.46*(hexagon.corner 10)$);
\coordinate (m 7) at  ($0.23*(hexagon.corner 6)+0.23*(hexagon.corner 5)+0.44*(hexagon.corner 10)$);
\coordinate (m 8) at  ($0.26*(hexagon.corner 6)+0.26*(hexagon.corner 7)+0.48*(hexagon.corner 10)$);
\coordinate (m 9) at  ($0.25*(hexagon.corner 9)+0.5*(hexagon.corner 7)+0.25*(hexagon.corner 10)$);
\coordinate (m 10) at  ($0.4*(hexagon.corner 9)+0.3*(hexagon.corner 7)+0.3*(hexagon.corner 8)$);


\draw [name path = M ,opacity=0] (hexagon.corner 1) to [out=15,in=40] (m 1) to [out=-180+40,in=55] (m 2) to [out=-180+55,in=75] (m 3) to [out=-180+75,in=80] (m 4) to [out=-180+80,in=90] (m 5) to [out=-180+90,in=110] (m 6) to [out=-180+110,in=125] (m 7) to [out=-180+125,in=125] (m 8) to [out=-180+125,in=105] (m 9) to [out=-180+105,in=80] (m 10) to [out=-180+80,in=80] (hexagon.corner 8);%


\draw[name path = L 2] (hexagon.corner 12)--(hexagon.corner 4);
\draw[name path = L 1] (hexagon.corner 12)--(hexagon.corner 2);
\draw[] (hexagon.corner 12)--(hexagon.corner 3);
\draw[name path = L 3](hexagon.corner 4)--(hexagon.corner 10);
\draw[](hexagon.corner 4)--(hexagon.corner 11);
\draw[name path = L 4](hexagon.corner 10)--(hexagon.corner 7);
\draw[](hexagon.corner 10)--(hexagon.corner 5);
\draw[](hexagon.corner 10)--(hexagon.corner 6);
\draw[name path = L 5](hexagon.corner 7)--(hexagon.corner 9);

\path [name intersections={of= M and L 1,by=X 1}];
\path [name intersections={of= M and L 2,by=X 2}];
\path [name intersections={of= M and L 3,by=X 3}];
\path [name intersections={of= M and L 4,by=X 4}];
\path [name intersections={of= M and L 5,by=X 5}];

\draw [fill=yellow!90!black, draw=none] (X 1) to (hexagon.corner 12) to (X 2) to [out=180-100,in=180+42] (X 1);

\draw [fill=yellow!90!black, draw=none] (X 2) to (hexagon.corner 4) to (X 3) to  [out=180-85,in=180+79] (X 2);
\draw [fill=yellow!90!black, draw=none] (X 3) to (hexagon.corner 10) to (X 4) to  [out=180-63,in=180+96] (X 3);
\draw [fill=yellow!90!black, draw=none] (X 4) to (hexagon.corner 7) to (X 5) to  [out=180-100,in=180+110] (X 4);

\draw[name path = L 2] (hexagon.corner 12)--(hexagon.corner 4);
\draw[name path = L 1] (hexagon.corner 12)--(hexagon.corner 2);
\draw[] (hexagon.corner 12)--(hexagon.corner 3);
\draw[name path = L 3](hexagon.corner 4)--(hexagon.corner 10);
\draw[](hexagon.corner 4)--(hexagon.corner 11);
\draw[name path = L 4](hexagon.corner 10)--(hexagon.corner 7);
\draw[](hexagon.corner 10)--(hexagon.corner 5);
\draw[](hexagon.corner 10)--(hexagon.corner 6);
\draw[name path = L 5](hexagon.corner 7)--(hexagon.corner 9);

\draw [opacity=1, thick] (hexagon.corner 1) to [out=180+40,in=40] (m 1) to [out=-180+40,in=55] (m 2) to [out=-180+55,in=75] (m 3) to [out=-180+75,in=80] (m 4) to [out=-180+80,in=90] (m 5) to [out=-180+90,in=110] (m 6) to [out=-180+110,in=125] (m 7) to [out=-180+125,in=125] (m 8) to [out=-180+125,in=105] (m 9) to [out=-180+105,in=80] (m 10) to [out=-180+80,in=80] (hexagon.corner 8);%



\end{tikzpicture}\ \ \begin{tikzpicture}[decoration={
    markings,
    mark=at position 0.5 with {\arrow{>}}},scale=0.8
    ] 

\tikzstyle{every path}=[draw] 
		\path
    node[
      regular polygon,
      regular polygon sides=12,
      draw,
      inner sep=2cm,
    ] (hexagon) {}
    %
    (hexagon.corner 1) node[above] {$c_0$}
    (hexagon.corner 2) node[above] {}
    (hexagon.corner 3) node[left] {}
    (hexagon.corner 4) node[left] {$c_2$}
    (hexagon.corner 5) node[below] {}
    (hexagon.corner 6) node[right] {}
    (hexagon.corner 7) node[below] {$c_4$}
    (hexagon.corner 8) node[below] {$c_5$}
    (hexagon.corner 9) node[left] {}
    (hexagon.corner 10) node[right] {$c_3$}
    (hexagon.corner 11) node[below] {}
    (hexagon.corner 12) node[right] {$c_1$}
;

\coordinate (m 1) at  ($0.38*(hexagon.corner 2)+0.38*(hexagon.corner 1)+0.24*(hexagon.corner 12)$);
\coordinate (m 2) at  ($0.28*(hexagon.corner 2)+0.28*(hexagon.corner 3)+0.44*(hexagon.corner 12)$);
\coordinate (m 3) at  ($0.24*(hexagon.corner 4)+0.24*(hexagon.corner 3)+0.52*(hexagon.corner 12)$);
\coordinate (m 4) at  ($0.52*(hexagon.corner 4)+0.24*(hexagon.corner 11)+0.24*(hexagon.corner 12)$);
\coordinate (m 5) at  ($0.56*(hexagon.corner 4)+0.22*(hexagon.corner 11)+0.22*(hexagon.corner 10)$);
\coordinate (m 6) at  ($0.27*(hexagon.corner 4)+0.27*(hexagon.corner 5)+0.46*(hexagon.corner 10)$);
\coordinate (m 7) at  ($0.23*(hexagon.corner 6)+0.23*(hexagon.corner 5)+0.44*(hexagon.corner 10)$);
\coordinate (m 8) at  ($0.26*(hexagon.corner 6)+0.26*(hexagon.corner 7)+0.48*(hexagon.corner 10)$);
\coordinate (m 9) at  ($0.25*(hexagon.corner 9)+0.5*(hexagon.corner 7)+0.25*(hexagon.corner 10)$);
\coordinate (m 10) at  ($0.4*(hexagon.corner 9)+0.3*(hexagon.corner 7)+0.3*(hexagon.corner 8)$);

\foreach \x in {1,...,10}{
\draw (m \x) node [fill,circle,scale=0.2] {};}

\foreach \x in {1,2,...,12}{
\draw (hexagon.corner \x) node [fill,circle,scale=0.2] {};}

\draw[] (hexagon.corner 12)--(hexagon.corner 4);
\draw[] (hexagon.corner 12)--(hexagon.corner 2);
\draw[] (hexagon.corner 12)--(hexagon.corner 3);

\draw[](hexagon.corner 4)--(hexagon.corner 10);
\draw[](hexagon.corner 4)--(hexagon.corner 11);

\draw[](hexagon.corner 10)--(hexagon.corner 7);
\draw[](hexagon.corner 10)--(hexagon.corner 5);
\draw[](hexagon.corner 10)--(hexagon.corner 6);

\draw[](hexagon.corner 7)--(hexagon.corner 9);

\draw[fill=yellow!50!white, nearly transparent] (hexagon.corner 12)--(hexagon.corner 4)--(hexagon.corner 3)--(hexagon.corner 2)--(hexagon.corner 1)--cycle;
\draw[fill=red!50!white, nearly transparent] (hexagon.corner 12)--(hexagon.corner 4)--(hexagon.corner 10)--(hexagon.corner 11)--cycle;

\draw[fill=green!50!white, nearly transparent] (hexagon.corner 10)-- (hexagon.corner 4)--(hexagon.corner 5)--(hexagon.corner 6)--(hexagon.corner 7)--cycle;
\draw[fill=blue!50!white, nearly transparent] (hexagon.corner 10)--(hexagon.corner 7)--(hexagon.corner 8)--(hexagon.corner 9)--cycle;   

\draw [opacity=1]  (m 1) to [out=-180+40,in=55] (m 2) to [out=-180+55,in=75] (m 3) to [out=-180+75,in=80] (m 4) to [out=-180+80,in=90] (m 5) to [out=-180+90,in=110] (m 6) to [out=-180+110,in=125] (m 7) to [out=-180+125,in=125] (m 8) to [out=-180+125,in=105] (m 9) to [out=-180+105,in=80] (m 10);%

\foreach \i in {1,2,3}{
\draw [] (hexagon.corner 12)--(m \i);}

\foreach \i in {4,5}{
\draw [] (hexagon.corner 4)--(m \i);}

\foreach \i in {6,7,8}{
\draw [] (hexagon.corner 10)--(m \i);}

\foreach \i in {9,10}{
\draw [] (hexagon.corner 7)--(m \i);}

\end{tikzpicture}
\caption{Left: yellow shading indicates the fan centers. Right: the auxiliary graph where different fans are indicated by different colors.}
\label{fig:bigger_auxiliary_graph}
\end{figure}
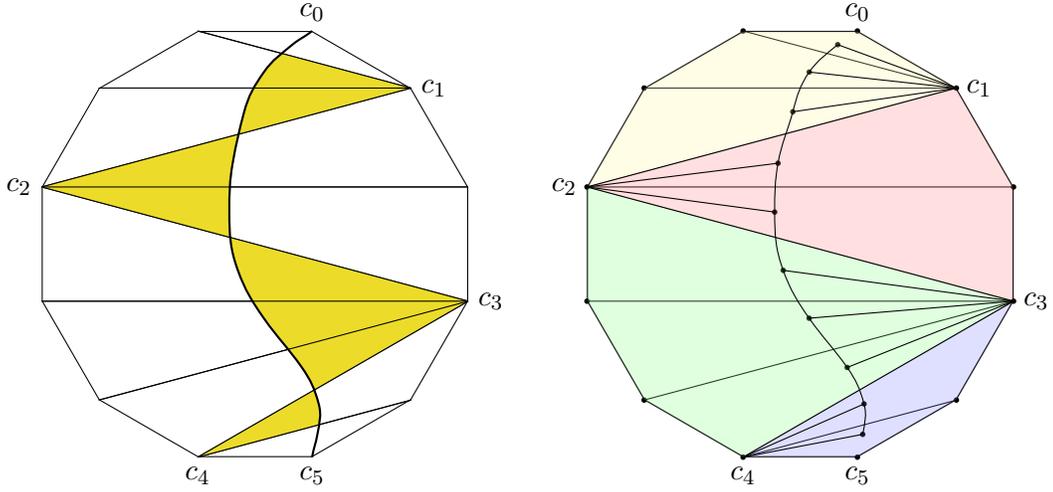

\subsection[Super T-paths]{Super $\boldsymbol{T}$-paths}

Now we define the super $T$-paths from $a$ to $b$ to be paths on edges of the auxiliary graph $\Gamma_T^{a,b}$ satisfying certain axioms.

\begin{Definition}[super $T$-paths]
\label{def:super-T}
A \emph{super $T$-path} $t$ from $a$ to $b$ is a sequence
 \[t=(a_0,a_1,\dots,a_{\ell(t)} \,|\, t_1,t_2,\dots,t_{\ell(t)})\]
 such that
 \begin{enumerate}\itemsep=0pt
 \item[\rm (T1)] $a=a_0,a_1,\dots,a_{\ell(t)}=b\in V_0\cup V_1$ are vertices on $\Gamma_T^{a,b}$,
 \item[\rm (T2)] for each $1\leq i\leq \ell(t)$, $t_i$ is an edge in $\Gamma_T^{a,b}$ connecting $a_{i-1}$ and $a_i$,
 \item[\rm (T3)] $t_i\neq t_j$ if $i\neq j$,
 \item[\rm (T4)] $\ell(t)$ is odd,
 \item[\rm (T5)] $t_i$ crosses $(a,b)$ if $i$ is even. The $\sigma$-edges are considered to cross $(a,b)$,
 \item[\rm (T6)] $t_i\in\sigma$ only if $i$ is even, $t_i\in\tau$ only if $i$ is odd,
 \item[\rm (T7)] if $i<j$ and both $t_i$ and $t_j$ cross the arc $(a,b)$, then the intersection $t_i\cap (a,b)$ is closer
 to the vertex $a$ than the intersection $t_j\cap (a,b)$.
 \end{enumerate}

 We let $\mathcal T_{a,b}$ denote the set of super $T$-paths from $a$ to $b$.
 Furthermore, let $\mathcal T_{a,b}^0$ be the set of super $T$-paths from $a$ to $b$ which do not use $\sigma$ or $\tau$ edges.
 We naturally identify $\mathcal{T}_{a,b}^0$ with $T_{a,b}$, the set of ordinary $T$-paths from $a$ to $b$.
 We also define $\mathcal T_{a,b}^1 := \mathcal T_{a,b} - \mathcal T_{a,b}^0$ as the set of super $T$-paths which do not
 correspond to ordinary $T$-paths.
\end{Definition}

An immediate observation is that every super $T$-path must have an even number of $\sigma$-edges.
More specifically, they always appear as a sequence of $\sigma$-edge, $\tau$-edge, and $\sigma$-edge.
Here $\tau$- stands for \emph{teleportation}: instead of following along an edge of the triangulation $T$, a $\tau$-step teleports
from one internal vertex to another. We call a subsequence of a super $T$-path of the form $(\dots,\theta_i,\theta_j,\dots|\dots,\sigma_i,\tau_{ij},\sigma_j,\dots)$ a \emph{super step}.
In other words, a super $T$-path is a~concatenation of certain ordinary $T$-paths and super steps.

\begin{Example} \label{example:14some}\rm
 Figure~\ref{fig:examplepath} illustrates several examples of super $T$-paths from $1$ to $4$.
 Odd-numbered edges are colored red and even-numbered edges are colored blue.

\begin{figure}[t]

\centering
\begin{tabular}{c}
\begin{tikzpicture}[scale=0.9]
	\tikzstyle{every path}=[draw]
		\path
    node[
      regular polygon,
      regular polygon sides=6,
      draw,
      inner sep=1.6cm,
    ] (hexagon) {}
    %
    (hexagon.corner 1) node[above] {$1$}
    (hexagon.corner 2) node[above] {$2$}
    (hexagon.corner 3) node[left] {$3$}
    (hexagon.corner 4) node[below] {$4 $}
    (hexagon.corner 5) node[below] {$5$}
    (hexagon.corner 6) node[right] {$6$}

  ;
  \coordinate (m1) at (1,1.62);
  \coordinate (m2) at (0.1,0.8);
  \coordinate (m3) at (-0.1,-0.8);
  \coordinate (m4) at (-1,-1.62);

  \draw [name path = m ,opacity=1] (m1) to [out=-180+50,in=60] (m2) to [out=-180+60,in=60] (m3) to [out=-180+60,in=50] (m4);%

  \draw [name path = L1] (hexagon.corner 6) to (hexagon.corner 2);
  \draw [name path = L2] (hexagon.corner 6) to (hexagon.corner 3);
  \draw [name path = L3] (hexagon.corner 5) to (hexagon.corner 3);

  \draw (m1) node [fill,circle,scale=0.3] {};
  \draw (m2) node [fill,circle,scale=0.3] {};
  \draw (m3) node [fill,circle,scale=0.3] {};
  \draw (m4) node [fill,circle,scale=0.3] {};

  \node[left] at (m1) {$\theta_1$};
  \node[left] at (m2) {$\theta_2$};
  \node[right] at (m3) {$\theta_3$};
  \node[left] at (m4) {$\theta_4$};

  \node at(2.2,1.5 ){$x_1$};
  \node at(0,2.5 ){$x_2$};
  \node at(-2.2,1.5 ){$x_3$};
  \node at(-2.2,-1.5 ){$x_4$};
  \node at(2.2,-1.5 ){$x_6$};
  \node at(0,-2.5 ){$x_5$};
  \node at(-0.4,1.5 ){$x_7$};
  \node at(-0.4,-1.5 ){$x_9$};
  \node at(-0.7,0.27 ){$x_8$};

  \draw (hexagon.corner 1) node [fill,circle,scale=0.35] {};
  \draw (hexagon.corner 2) node [fill,circle,scale=0.35] {};
  \draw (hexagon.corner 3) node [fill,circle,scale=0.35] {};
  \draw (hexagon.corner 4) node [fill,circle,scale=0.35] {};
  \draw (hexagon.corner 5) node [fill,circle,scale=0.35] {};
  \draw (hexagon.corner 6) node [fill,circle,scale=0.35] {};

  \draw [] (m1)--(hexagon.corner 6);
  \draw [] (m2)-- (hexagon.corner 6);
  \draw [] (m3)--(hexagon.corner 3);
  \draw [] (m4)-- (hexagon.corner 3);

  \draw [ultra thick, red] (hexagon.corner 1) to (hexagon.corner 6) ;
  \draw [ultra thick,blue] (hexagon.corner 6) to (m1);
  \draw [ultra thick,red]  (m1) to [out=-180+50,in=60] (m2) to (hexagon.corner 6);
   \draw [ultra thick,blue] (m2) to (hexagon.corner 6);
  \draw [ultra thick, red] (hexagon.corner 6) to (hexagon.corner 3) ;
  \draw [ultra thick, blue] (hexagon.corner 3) to (m3);
  \draw [ultra thick,red] (m3) to [out=-180+60,in=50] (m4) to (hexagon.corner 3);
  \draw [ultra thick,blue]  (m4) to (hexagon.corner 3);
  \draw [ultra thick, red] (hexagon.corner 4) to (hexagon.corner 3) ;

	\end{tikzpicture}

  \\
  \( t=(1,6,\theta_1,\theta_2,6,3,\theta_3,\theta_4,3,4\,|\,x_1,\sigma_1,\tau_{12},\sigma_2,x_8,\sigma_3,\tau_{34},\sigma_4,x_4)\)
\end{tabular}
\begin{tabular}{c}
	
 \begin{tikzpicture}[scale=0.9]
	\tikzstyle{every path}=[draw]
		\path
    node[
      regular polygon,
      regular polygon sides=6,
      draw,
      inner sep=1.6cm,
    ] (hexagon) {}
    %
    (hexagon.corner 1) node[above] {$1$}
    (hexagon.corner 2) node[above] {$2$}
    (hexagon.corner 3) node[left] {$3$}
    (hexagon.corner 4) node[below] {$4$}
    (hexagon.corner 5) node[below] {$5$}
    (hexagon.corner 6) node[right] {$6$}

  ;
  \coordinate (m1) at (1,1.62);
  \coordinate (m2) at (0.1,0.8);
  \coordinate (m3) at (-0.1,-0.8);
  \coordinate (m4) at (-1,-1.62);
  \node at(2.2,1.5 ){$x_1$};
  \node at(0,2.5 ){$x_2$};
  \node at(-2.2,1.5 ){$x_3$};
  \node at(-2.2,-1.5 ){$x_4$};
  \node at(2.2,-1.5 ){$x_6$};
  \node at(0,-2.5 ){$x_5$};
  \node at(-0.4,1.5 ){$x_7$};
  \node at(-0.4,-1.5 ){$x_9$};
  \node at(-0.7,0.27 ){$x_8$};

  \draw [name path = m ,opacity=1] (m1) to [out=-180+50,in=60] (m2) to [out=-180+60,in=60] (m3) to [out=-180+60,in=50] (m4);%

  \draw [name path = L1] (hexagon.corner 6) to (hexagon.corner 2);
  \draw [name path = L2] (hexagon.corner 6) to (hexagon.corner 3);
  \draw [name path = L3] (hexagon.corner 5) to (hexagon.corner 3);

  \draw (m1) node [fill,circle,scale=0.3] {};
  \draw (m2) node [fill,circle,scale=0.3] {};
  \draw (m3) node [fill,circle,scale=0.3] {};
  \draw (m4) node [fill,circle,scale=0.3] {};

  \node[left] at (m1) {$\theta_1$};
  \node[left] at (m2) {$\theta_2$};
  \node[right] at (m3) {$\theta_3$};
  \node[left] at (m4) {$\theta_4$};

  \draw (hexagon.corner 1) node [fill,circle,scale=0.35] {};
  \draw (hexagon.corner 2) node [fill,circle,scale=0.35] {};
  \draw (hexagon.corner 3) node [fill,circle,scale=0.35] {};
  \draw (hexagon.corner 4) node [fill,circle,scale=0.35] {};
  \draw (hexagon.corner 5) node [fill,circle,scale=0.35] {};
  \draw (hexagon.corner 6) node [fill,circle,scale=0.35] {};

  \draw [] (m1)--(hexagon.corner 6);
  \draw [] (m2)-- (hexagon.corner 6);
  \draw [] (m3)--(hexagon.corner 3);
  \draw [] (m4)-- (hexagon.corner 3);

  \draw [ultra thick, red] (hexagon.corner 1) to (hexagon.corner 6) ;
  \draw [ultra thick, blue] (hexagon.corner 6) to (m1) ;
  \draw [ultra thick, red] (m1) to [out=-180+50,in=60] (m2) to (hexagon.corner 6);
  \draw [ultra thick, blue] (m2) to (hexagon.corner 6);
  \draw [ultra thick, red] (hexagon.corner 6) to (hexagon.corner 3) ;
  \draw [ultra thick, blue] (hexagon.corner 3) to (hexagon.corner 5) ;

  \draw [ultra thick, red] (hexagon.corner 4) to (hexagon.corner 5) ;

	\end{tikzpicture}
	\\
	$t=(1,6,\theta_1,\theta_2,6,3,5,4\,|\,x_1,\sigma_1,\tau_{12},\sigma_2,x_8,x_9,x_5)$
	\end{tabular}
	\begin{tabular}{c}
	\begin{tikzpicture}[scale=0.9]
	\tikzstyle{every path}=[draw]
		\path
    node[
      regular polygon,
      regular polygon sides=6,
      draw,
      inner sep=1.6cm,
    ] (hexagon) {}
    %
    (hexagon.corner 1) node[above] {$1$}
    (hexagon.corner 2) node[above] {$2$}
    (hexagon.corner 3) node[left] {$3$}
    (hexagon.corner 4) node[below] {$4$}
    (hexagon.corner 5) node[below] {$5$}
    (hexagon.corner 6) node[right] {$6$}

  ;
  \coordinate (m1) at (1,1.62);
  \coordinate (m2) at (0.1,0.8);
  \coordinate (m3) at (-0.1,-0.8);
  \coordinate (m4) at (-1,-1.62);

  \node at(2.2,1.5 ){$x_1$};
  \node at(0,2.5 ){$x_2$};
  \node at(-2.2,1.5 ){$x_3$};
  \node at(-2.2,-1.5 ){$x_4$};
  \node at(2.2,-1.5 ){$x_6$};
  \node at(0,-2.5 ){$x_5$};
  \node at(-0.4,1.5 ){$x_7$};
  \node at(-0.4,-1.5 ){$x_9$};
  \node at(-0.7,0.27 ){$x_8$};

  \draw [name path = m ,opacity=1] (m1) to [out=-180+50,in=60] (m2) to [out=-180+60,in=60] (m3) to [out=-180+60,in=50] (m4);%

  \draw [name path = L1] (hexagon.corner 6) to (hexagon.corner 2);
  \draw [name path = L2] (hexagon.corner 6) to (hexagon.corner 3);
  \draw [name path = L3] (hexagon.corner 5) to (hexagon.corner 3);

  \draw (m1) node [fill,circle,scale=0.3] {};
  \draw (m2) node [fill,circle,scale=0.3] {};
  \draw (m3) node [fill,circle,scale=0.3] {};
  \draw (m4) node [fill,circle,scale=0.3] {};

  \node[left] at (m1) {$\theta_1$};
  \node[left] at (m2) {$\theta_2$};
  \node[right] at (m3) {$\theta_3$};
  \node[left] at (m4) {$\theta_4$};

  \draw (hexagon.corner 1) node [fill,circle,scale=0.35] {};
  \draw (hexagon.corner 2) node [fill,circle,scale=0.35] {};
  \draw (hexagon.corner 3) node [fill,circle,scale=0.35] {};
  \draw (hexagon.corner 4) node [fill,circle,scale=0.35] {};
  \draw (hexagon.corner 5) node [fill,circle,scale=0.35] {};
  \draw (hexagon.corner 6) node [fill,circle,scale=0.35] {};

  \draw [] (m1)--(hexagon.corner 6);
  \draw [] (m2)-- (hexagon.corner 6);
  \draw [] (m3)--(hexagon.corner 3);
  \draw [] (m4)-- (hexagon.corner 3);

  \draw [ultra thick, red] (hexagon.corner 1) to (hexagon.corner 6) ;
  \draw [ultra thick, blue] (hexagon.corner 6) to (m1) to [out=-180+50,in=60] (m2) to [out=-180+60,in=60] (m3) to (hexagon.corner 3);
   \draw [ultra thick, red] (m1) to [out=-180+50,in=60] (m2) to [out=-180+60,in=60] (m3) to (hexagon.corner 3);

   \draw [ultra thick, red] (m1) to [out=-180+50,in=60] (m2) to [out=-180+60,in=60] (m3);
   \draw [ultra thick, blue](m3) to (hexagon.corner 3);

  \draw [ultra thick, red] (hexagon.corner 3) to (hexagon.corner 4) ;
\end{tikzpicture}
\\
$t=(1,6,\theta_1,\theta_3,3,4\,|\,x_1,\sigma_1,\tau_{13},\sigma_3,x_4)$

\end{tabular}
\caption{Examples of super $T$-paths.}
\label{fig:examplepath}
\end{figure}
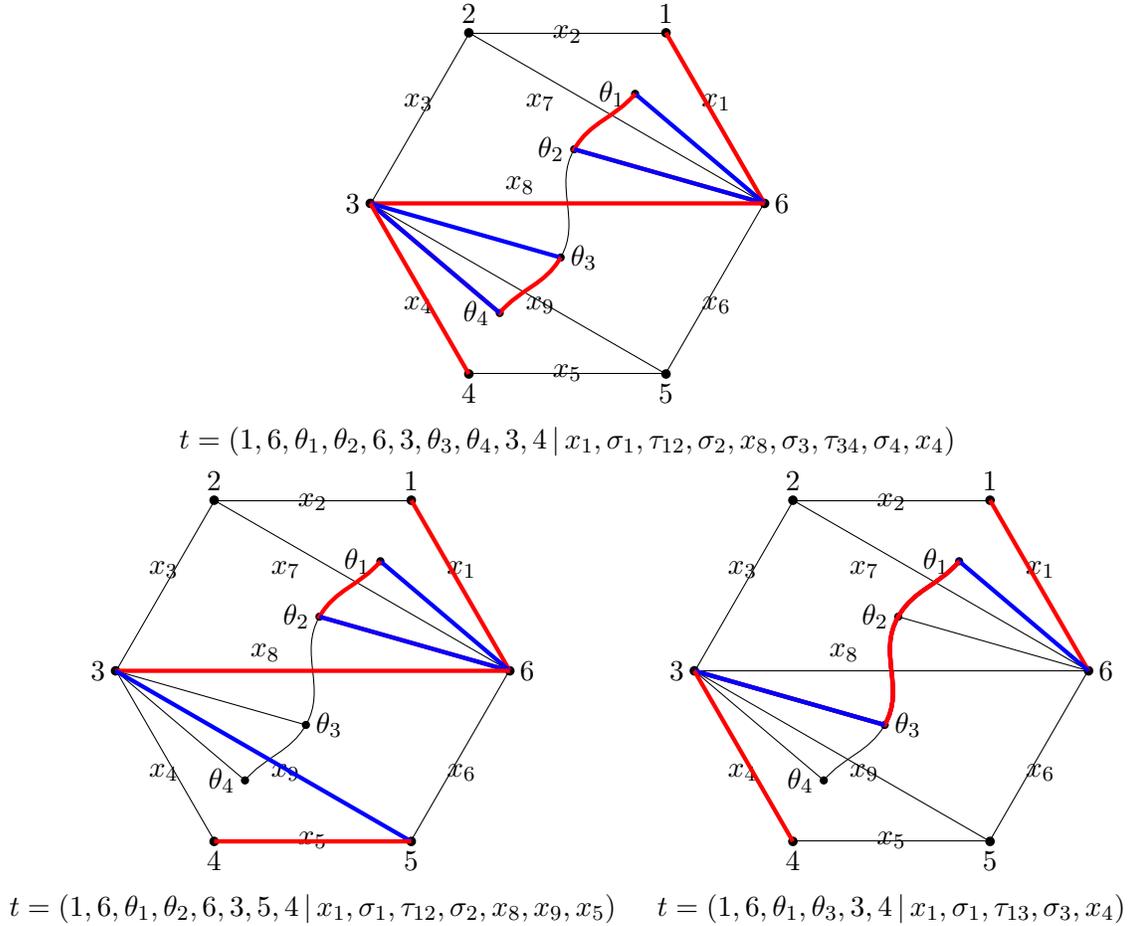
\end{Example}

\subsection{Default orientation and positive order}	

{Fix an arc $(a,b)$, and as mentioned in the previous section, we assume that this arc crosses all diagonals in the chosen triangulation.
We also choose a direction $a \to b$ for this arc.}
Based on these choices, we will define a \emph{default orientation}, which guarantees an ordering of the $\mu$-invariants
in which the coefficients in the $\lambda$-length expansion have positive coefficients.
Accordingly, we will call this ordering the \emph{positive ordering}.
Notice that only the orientation of interior edges affects our calculation of $\lambda$-lengths, therefore the orientation of boundary edges will be omitted.

Recall the convention for labelling the vertices of a polygon: given two vertices $a$ and $b$ and a chosen
direction $a \to b$, we label $c_0 = a$, $c_{N+1} = b$, and the fan centers are labelled $c_1,\dots,c_N$ in such a way that $c_i$
is closer to $a$ than $c_{i+1}$ is. See Figure~\ref{fig:default_spin} for an illustration.

\begin{Definition}[default orientation]\rm
 When the triangulation is a single fan with $c_1$ being the center, every interior edge is oriented away from $c_1$.
 When $T$ is a triangulation with $N>1$ fans, where $c_1,\dots,c_N$ are the centers, the interior edges within each fan segment are oriented away
 from its center. The edges where two fans meet each other are oriented as
 \( c_1\rightarrow c_2\rightarrow\cdots\rightarrow c_{N-1}\rightarrow c_N \).
 See Figures~\ref{fig:default_spin} and \ref{fig:default_spin_more}.
\end{Definition}

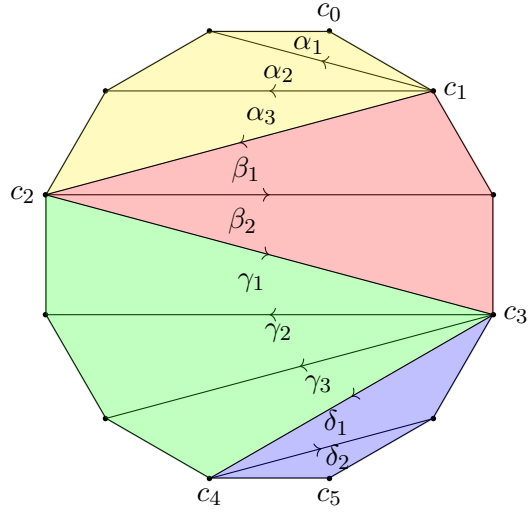
\begin{figure}[h]
\centering
\begin{tikzpicture}[decoration={
    markings,
    mark=at position 0.5 with {\arrow{>}}},
    scale=1.2] 

\tikzstyle{every path}=[draw] 
		\path
    node[
      regular polygon,
      regular polygon sides=12,
      draw,
      inner sep=2.1cm,
    ] (hexagon) {}
    %
    (hexagon.corner 1) node[above] {$c_0$}
    (hexagon.corner 2) node[above] {}
    (hexagon.corner 3) node[left] {}
    (hexagon.corner 4) node[left] {$c_2$}
    (hexagon.corner 5) node[below] {}
    (hexagon.corner 6) node[right] {}
    (hexagon.corner 7) node[below] {$c_4$}
    (hexagon.corner 8) node[below] {$c_5$}
    (hexagon.corner 9) node[left] {}
    (hexagon.corner 10) node[right] {$c_3$}
    (hexagon.corner 11) node[below] {}
    (hexagon.corner 12) node[right] {$c_1$}
;

\foreach \x in {1,2,...,12}{
\draw (hexagon.corner \x) node [fill,circle,scale=0.2] {};}

\draw[postaction={decorate}] (hexagon.corner 12)--(hexagon.corner 4);
\draw[postaction={decorate}] (hexagon.corner 12)--(hexagon.corner 2);
\draw[postaction={decorate}] (hexagon.corner 12)--(hexagon.corner 3);

\draw[postaction={decorate}](hexagon.corner 4)--(hexagon.corner 10);
\draw[postaction={decorate}](hexagon.corner 4)--(hexagon.corner 11);

\draw[postaction={decorate}](hexagon.corner 10)--(hexagon.corner 7);
\draw[postaction={decorate}](hexagon.corner 10)--(hexagon.corner 5);
\draw[postaction={decorate}](hexagon.corner 10)--(hexagon.corner 6);

\draw[postaction={decorate}](hexagon.corner 7)--(hexagon.corner 9);

\draw[fill=yellow, nearly transparent] (hexagon.corner 12)--(hexagon.corner 4)--(hexagon.corner 3)--(hexagon.corner 2)--(hexagon.corner 1)--cycle;
\draw[fill=red, nearly transparent] (hexagon.corner 12)--(hexagon.corner 4)--(hexagon.corner 10)--(hexagon.corner 11)--cycle;

\draw[fill=green, nearly transparent] (hexagon.corner 10)-- (hexagon.corner 4)--(hexagon.corner 5)--(hexagon.corner 6)--(hexagon.corner 7)--cycle;
\draw[fill=blue, nearly transparent] (hexagon.corner 10)--(hexagon.corner 7)--(hexagon.corner 8)--(hexagon.corner 9)--cycle;

\coordinate (m 1) at  ($0.38*(hexagon.corner 2)+0.38*(hexagon.corner 1)+0.24*(hexagon.corner 12)$);
\coordinate (m 2) at  ($0.28*(hexagon.corner 2)+0.28*(hexagon.corner 3)+0.44*(hexagon.corner 12)$);
\coordinate (m 3) at  ($0.24*(hexagon.corner 4)+0.24*(hexagon.corner 3)+0.52*(hexagon.corner 12)$);
\coordinate (m 4) at  ($0.52*(hexagon.corner 4)+0.24*(hexagon.corner 11)+0.24*(hexagon.corner 12)$);
\coordinate (m 5) at  ($0.56*(hexagon.corner 4)+0.22*(hexagon.corner 11)+0.22*(hexagon.corner 10)$);
\coordinate (m 6) at  ($0.27*(hexagon.corner 4)+0.27*(hexagon.corner 5)+0.46*(hexagon.corner 10)$);
\coordinate (m 7) at  ($0.23*(hexagon.corner 6)+0.23*(hexagon.corner 5)+0.44*(hexagon.corner 10)$);
\coordinate (m 8) at  ($0.26*(hexagon.corner 6)+0.26*(hexagon.corner 7)+0.48*(hexagon.corner 10)$);
\coordinate (m 9) at  ($0.25*(hexagon.corner 9)+0.5*(hexagon.corner 7)+0.25*(hexagon.corner 10)$);
\coordinate (m 10) at  ($0.36*(hexagon.corner 9)+0.24*(hexagon.corner 7)+0.4*(hexagon.corner 8)$);

\node at (m 1) {$\alpha_1$};
\node at (m 2) {$\alpha_2$};
\node at (m 3) {$\alpha_3$};
\node at (m 4) {$\beta_1$};
\node at (m 5) {$\beta_2$};
\node at (m 6) {$\gamma_1$};
\node at (m 7) {$\gamma_2$};
\node at (m 8) {$\gamma_3$};
\node at (m 9) {$\delta_1$};
\node at (m 10) {$\delta_2$};

\end{tikzpicture}
\caption{The default orientation of a generic triangulation where each fan segment is colored differently. The faces are labelled by their $\mu$-invariants.}

\label{fig:default_spin}
	
\end{figure}
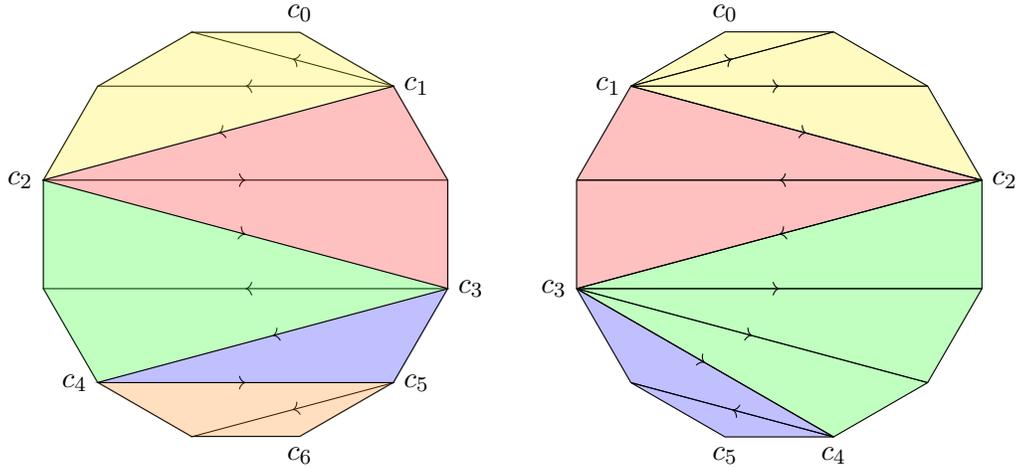
\begin{figure}[h]
\centering
\begin{tikzpicture}[decoration={
    markings,
    mark=at position 0.5 with {\arrow{>}}}
    ,scale=1.1] 

\tikzstyle{every path}=[draw] 
		\path
    node[
      regular polygon,
      regular polygon sides=12,
      draw,
      inner sep=1.9cm,
    ] (hexagon) {}
    %
    (hexagon.corner 1) node[above] {$c_0$}
    (hexagon.corner 2) node[above] {}
    (hexagon.corner 3) node[left] {}
    (hexagon.corner 4) node[left] {$c_2$}
    (hexagon.corner 5) node[below] {}
    (hexagon.corner 6) node[left] {$c_4$}
    (hexagon.corner 7) node[below] {}
    (hexagon.corner 8) node[below] {$c_6$}
    (hexagon.corner 9) node[right] {$c_5$}
    (hexagon.corner 10) node[right] {$c_3$}
    (hexagon.corner 11) node[below] {}
    (hexagon.corner 12) node[right] {$c_1$}
;

\draw[postaction={decorate}] (hexagon.corner 12)--(hexagon.corner 4);
\draw[postaction={decorate}] (hexagon.corner 12)--(hexagon.corner 2);
\draw[postaction={decorate}] (hexagon.corner 12)--(hexagon.corner 3);

\draw[postaction={decorate}](hexagon.corner 4)--(hexagon.corner 10);
\draw[postaction={decorate}](hexagon.corner 4)--(hexagon.corner 11);

\draw[postaction={decorate}](hexagon.corner 6)--(hexagon.corner 9);
\draw[postaction={decorate}](hexagon.corner 10)--(hexagon.corner 5);
\draw[postaction={decorate}](hexagon.corner 10)--(hexagon.corner 6);

\draw[postaction={decorate}](hexagon.corner 9)--(hexagon.corner 7);

\draw[fill=yellow, nearly transparent] (hexagon.corner 12)--(hexagon.corner 4)--(hexagon.corner 3)--(hexagon.corner 2)--(hexagon.corner 1)--cycle;
\draw[fill=red, nearly transparent] (hexagon.corner 12)--(hexagon.corner 4)--(hexagon.corner 10)--(hexagon.corner 11)--cycle;

\draw[fill=green, nearly transparent] (hexagon.corner 10)-- (hexagon.corner 4)--(hexagon.corner 5)--(hexagon.corner 6)--cycle;
\draw[fill=blue, nearly transparent] (hexagon.corner 10)--(hexagon.corner 6)--(hexagon.corner 9)--cycle;   
\draw[fill=orange, nearly transparent] (hexagon.corner 6)--(hexagon.corner 9)--(hexagon.corner 8)--(hexagon.corner 7)--cycle;

\coordinate (m 1) at  ($0.38*(hexagon.corner 2)+0.38*(hexagon.corner 1)+0.24*(hexagon.corner 12)$);
\coordinate (m 2) at  ($0.28*(hexagon.corner 2)+0.28*(hexagon.corner 3)+0.44*(hexagon.corner 12)$);
\coordinate (m 3) at  ($0.24*(hexagon.corner 4)+0.24*(hexagon.corner 3)+0.52*(hexagon.corner 12)$);
\coordinate (m 4) at  ($0.52*(hexagon.corner 4)+0.24*(hexagon.corner 11)+0.24*(hexagon.corner 12)$);
\coordinate (m 5) at  ($0.56*(hexagon.corner 4)+0.22*(hexagon.corner 11)+0.22*(hexagon.corner 10)$);
\coordinate (m 6) at  ($0.27*(hexagon.corner 4)+0.27*(hexagon.corner 5)+0.46*(hexagon.corner 10)$);
\coordinate (m 7) at  ($0.23*(hexagon.corner 6)+0.23*(hexagon.corner 5)+0.44*(hexagon.corner 10)$);
\coordinate (m 8) at  ($0.26*(hexagon.corner 6)+0.26*(hexagon.corner 7)+0.48*(hexagon.corner 10)$);
\coordinate (m 9) at  ($0.25*(hexagon.corner 9)+0.5*(hexagon.corner 7)+0.25*(hexagon.corner 10)$);
\coordinate (m 10) at  ($0.36*(hexagon.corner 9)+0.24*(hexagon.corner 7)+0.4*(hexagon.corner 8)$);

\end{tikzpicture}
\quad
\begin{tikzpicture}[decoration={
    markings,
    mark=at position 0.5 with {\arrow{>}}}
    ,scale=1.1] 
\begin{scope}[xscale=1,yscale=1]
\tikzstyle{every path}=[draw] 
		\path
    node[
      regular polygon,
      regular polygon sides=12,
      draw=none,
      inner sep=1.9cm,
    ] (T) {}

    (T.corner 1) node[above] {}
    (T.corner 2) node[above] {$c_0$}
    (T.corner 3) node[left] {$c_1$}
    (T.corner 4) node[left] {}
    (T.corner 5) node[left] {$c_3$}
    (T.corner 6) node[right] {}
    (T.corner 7) node[below] {$c_{5}$}
    (T.corner 8) node[below] {$c_{4}$}
    (T.corner 9) node[left] {}
    (T.corner 10) node[right] {}
    (T.corner 11) node[right] {$c_2$}
    (T.corner 12) node[right] {}
    
;

\draw [-] (T.corner 1) to (T.corner 2) to (T.corner 3) to (T.corner 4) to (T.corner 5);
\draw [ ] (T.corner 1) to (T.corner 12);
\draw [ ] (T.corner 5) to (T.corner 6);
\draw [-] (T.corner 12)--(T.corner 11)--(T.corner 10);
\draw [ ] (T.corner 10) to (T.corner 9)to (T.corner 8);
\draw [-] (T.corner 6)--(T.corner 7)--(T.corner 8);

\coordinate (m 1) at  ($0.38*(T.corner 1)+0.38*(T.corner 2)+0.24*(T.corner 3)$);
\coordinate (m 2) at  ($0.28*(T.corner 1)+0.28*(T.corner 12)+0.44*(T.corner 3)$);
\coordinate (m 3) at  ($0.245*(T.corner 11)+0.245*(T.corner 12)+0.51*(T.corner 3)$);
\coordinate (m 4) at  ($0.52*(T.corner 11)+0.24*(T.corner 4)+0.24*(T.corner 3)$);
\coordinate (m 5) at  ($0.56*(T.corner 11)+0.22*(T.corner 4)+0.22*(T.corner 5)$);
\coordinate (m 6) at  ($0.27*(T.corner 11)+0.27*(T.corner 10)+0.46*(T.corner 5)$);
\coordinate (m 7) at  ($0.23*(T.corner 9)+0.23*(T.corner 10)+0.44*(T.corner 5)$);
\coordinate (m 8) at  ($0.26*(T.corner 9)+0.26*(T.corner 8)+0.48*(T.corner 5)$);
\coordinate (m 9) at  ($0.25*(T.corner 6)+0.5*(T.corner 8)+0.25*(T.corner 5)$);
\coordinate (m 10) at  ($0.4*(T.corner 6)+0.3*(T.corner 8)+0.3*(T.corner 7)$);


\draw [name path = M ,opacity=0] (T.corner 1) to [out=15,in=40] (m 1) to [out=-180+40,in=55] (m 2) to [out=-180+55,in=75] (m 3) to [out=-180+75,in=80] (m 4) to [out=-180+80,in=90] (m 5) to [out=-180+90,in=110] (m 6) to [out=-180+110,in=125] (m 7) to [out=-180+125,in=125] (m 8) to [out=-180+125,in=105] (m 9) to [out=-180+105,in=80] (m 10) to [out=-180+80,in=80] (T.corner 8);%

\draw[name path = L 2] (T.corner 3)--(T.corner 11);
\draw[name path = L 1] (T.corner 3)--(T.corner 1);
\draw[] (T.corner 3)--(T.corner 12);
\draw[name path = L 3](T.corner 11)--(T.corner 5);
\draw[](T.corner 4)--(T.corner 11);
\draw[name path = L 4, ](T.corner 5)--(T.corner 8);
\draw[](T.corner 10)--(T.corner 5);
\draw[ ](T.corner 5)--(T.corner 9);
\draw[name path = L 5](T.corner 8)--(T.corner 6);

\path [name intersections={of= M and L 1,by=X 1}];
\path [name intersections={of= M and L 2,by=X 2}];
\path [name intersections={of= M and L 3,by=X 3}];
\path [name intersections={of= M and L 4,by=X 4}];
\path [name intersections={of= M and L 5,by=X 5}];

\draw[fill=yellow, nearly transparent] (T.corner 11)--(T.corner 12)--(T.corner 1)--(T.corner 2)--(T.corner 3)--cycle;
\draw[fill=red, nearly transparent] (T.corner 5)--(T.corner 4)--(T.corner 3)--(T.corner 11)--cycle;
\draw[fill=green, nearly transparent] (T.corner 11)--(T.corner 5)--(T.corner 8)--(T.corner 9)--(T.corner 10)--cycle;
\draw[fill=blue, nearly transparent] (T.corner 5)--(T.corner 8)--(T.corner 7)--(T.corner 6)--cycle;

\draw[postaction={decorate}] (T.corner 3)--(T.corner 1);
\draw[postaction={decorate}] (T.corner 3)--(T.corner 12);
\draw[postaction={decorate}] (T.corner 3)--(T.corner 11);
\draw[postaction={decorate}] (T.corner 11)--(T.corner 4);
\draw[postaction={decorate}] (T.corner 11)--(T.corner 5);
\draw[postaction={decorate}] (T.corner 5)--(T.corner 10);
\draw[postaction={decorate}] (T.corner 5)--(T.corner 9);
\draw[postaction={decorate}] (T.corner 5)--(T.corner 8);
\draw[postaction={decorate}] (T.corner 8)--(T.corner 6);

\end{scope}
\end{tikzpicture}

\caption{More examples of default orientation.}
\label{fig:default_spin_more}
\end{figure}

\begin{Remark}\rm
 As mentioned above, the definition of default orientation depends on the choice of direction $a \to b$.
 In particular, choosing the opposite direction $b \to a$ would change the labelling so that $c_i$ becomes $c_{N-i}$.
 The effect is that the orientation of the diagonals within a fan are unchanged, but the diagonals connecting
 two fan centers would have the reverse orientation.
\end{Remark}

\begin{Definition}[positive ordering]\label{def:pos_order}\rm
 For $F$ a single fan triangulation with center $c_1$, let $\theta_1,\dots,\theta_k$ be its faces. The \emph{positive ordering} is defined to be
 \[\theta_1>\theta_2>\cdots>\theta_k, \]
 where $\theta_1,\dots,\theta_k$ are ordered counterclockwise around $c_1$.

 For a triangulation $T$ with fans $F_1,\dots,F_N$, we order the fans as follows in two different cases
 \begin{enumerate}\itemsep=0pt
 	\item If $c_{N-1},c_{N},c_{N+1}$ are oriented {counterclockwise}, then we order the fans as follows
\[\begin{cases}
	F1>F3>\cdots>F_{N-1}>F_N>F_{N-2}>\cdots>F4>F2&\text{if }N\text{ is even,}\\
	F2>F4>\cdots>F_{N-1}>F_N>F_{N-2}>\cdots>F3>F1&\text{if }N\text{ is odd.}
\end{cases}
\]
\item If $c_{N-1}$, $c_{N}$, $c_{N+1}$ are oriented {clockwise}, then we order the fans as follows
\[\begin{cases}
	F1>F3>\cdots>F_{N-2}>F_N>F_{N-1}>\cdots>F4>F2&\text{if }N\text{ is odd,}\\
	F2>F4>\cdots>F_{N-2}>F_N>F_{N-1}>\cdots>F3>F1&\text{if }N\text{ is even.}
\end{cases}
\]
\end{enumerate}

Then the \emph{positive ordering} on faces of $T$ is induced by the ordering on fans and the positive ordering within each fan.
\end{Definition}

\begin{Remark} \label{rem:pos_order}\rm
 The positive ordering may also be described inductively, triangle-by-triangle, as follows:
 Recall from the definition of auxiliary graph that the triangles are labelled $\theta_1, \theta_2, \dots, \theta_n$ in order from $a$ to $b$.
 For each triangle $\theta_k$, look at the edge separating $\theta_k$ and $\theta_{k+1}$. If the edge is
 oriented so that $\theta_k$ is to the right, then we declare that $\theta_k > \theta_i$ for all $i > k$.
 On the other hand, if $\theta_k$ is to the left, we declare that $\theta_k < \theta_i$ for all $i > k$.
\end{Remark}

For example, in Figure~\ref{fig:default_spin}, the positive ordering on the faces is
\[\alpha_1>\alpha_2>\alpha_3>\gamma_1>\gamma_2>\gamma_3> \delta_2>\delta_1>\beta_2>\beta_1.\]

\subsection{Expansion formula}

\begin{Definition}[weight] \label{def:weight}\rm
 Let $t \in \mathcal{T}_{ab}$ be a super $T$-path which uses edges $t_1,t_2,\dots$ in the auxiliary graph $\Gamma^T_{a,b}$.
 We will assign to each edge $t_i$ a weight, which will be an element in the super algebra
 $\mathbb R\bigl[x_1^{\pm\frac{1}{2}},\dots,x_{2n+3}^{\pm\frac{1}{2}} \,|\, \theta_1,\dots,\theta_{n+1}\bigr]$ (where $\theta_i$'s are the odd generators) as follows.
 For the parity of edges $t_i \in \sigma$ or $\tau$, we recall axiom (T6) of Definition~\ref{def:super-T}:
 \[
 \wt(t_i) := \begin{cases}
 x_j &\text{if } t_i \in T \text{, with }\lambda\text{-length }x_j, \text{ and }i \text{ is odd}, \\
 x_j^{-1} &\text{if } t_i \in T \text{, with }\lambda\text{-length }x_j, \text{ and }i \text{ is even}, \\
 \sqrt{\frac{x_j}{x_kx_l}} \, \theta_s &\text{if } t_i \in\sigma \text{ and the face containing }t_i \\
 & \text{is as pictured below} ~ (i \text{ must be even}), \\
 1&\text{if }t_i\in\tau~ (i \text{ must be odd}).
 \end{cases} \]

 In keeping with the intuition mentioned after Definition~\ref{def:super-T}, teleportation is unweighted:
 \begin{center}
 \begin{tikzpicture}[scale=1.25]
 \coordinate (1) at (1,0);
 \coordinate (2) at (-1,0);
 \coordinate (3) at (0,1.618);
 \coordinate (4) at (0, 0.618);
 \draw (1)--(2)--(3)--cycle;
 \draw [thick](3)--(4);
 \node at (-0.2, 0.518) {$\theta_s$};
 \node at (0.1,1.118) {$t_i$};
 \node at (0.65,0.9) {$x_l$};
 \node at (-0.65,0.9) {$x_k$};
 \node at (0,-0.15) {$x_j$};
 \end{tikzpicture}
 \end{center}
 Here, $\theta_s$ is the $\mu$-invariant associated to the face containing $t_i$.
	Finally, we define the weight of a super $T$-path to be the product of the weights of its edges
 \[\wt(t)=\prod_{t_i\in t}\wt(t_i),\]
 where the product of $\mu$-invariants is taken under the positive ordering.
\end{Definition}

 In what follows, we will use $\lambda_{ab}$ to denote the $\lambda$-length of the arc $(a,b)$,
 and we will write $\boxed{ijk}$ to denote the $\mu$-invariant associated to the triple of
 ideal points $(i,j,k)$, subject to Remark~\ref{rem:mu-invariants}.
	
The following theorem is the main result of the current paper, giving an explicit expression for arbitrary super $\lambda$-lengths
in terms of the $\lambda$-lengths and $\mu$-invariants of a fixed triangulation.

\begin{Theorem}\label{thm:main}
Under the default orientation,
 the $\lambda$-length of $(a,b)$ is given by
 \[\lambda_{a,b} = \sum_{t\in\mathcal T_{a,b}} \wt(t).
 \]
\end{Theorem}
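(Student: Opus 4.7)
The plan is to prove Theorem \ref{thm:main} by induction on the number of triangles of $T$ crossed by the arc $(a,b)$. By Remark \ref{rem:subtriang}, we may assume $(a,b)$ crosses every internal diagonal of $T$, so this count equals the total number of triangles of $T$. The base case in which $T$ consists of a single triangle reduces both sides to one monomial, and the quadrilateral case is precisely the super Ptolemy relation \eqref{eq:mutation}: its three summands $ac$, $bd$, and $\sqrt{abcd}\,\sigma\theta$ match the weights of the two ordinary $T$-paths together with the unique genuine super $T$-path (a pair of $\sigma$-edges joined by a single $\tau$-edge).

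For the inductive step, I would flip a well-chosen diagonal of $T$, for instance one bounding the last fan segment $F_N$, using the super Ptolemy equations \eqref{eq:mutation}, \eqref{eq:mu_mutation_neg}, and \eqref{eq:mu_mutation_pos}. This re-expresses $\lambda_{a,b}$ in terms of $\lambda$-lengths and $\mu$-invariants of a triangulation $T'$ whose relevant subpolygons are combinatorially simpler. Applying the inductive hypothesis to each subpolygon and then concatenating super $T$-paths across the flipped diagonal should reproduce the full sum over $\mathcal{T}_{a,b}$. The super step created by the $\sqrt{abcd}\,\sigma\theta$ term in \eqref{eq:mutation} supplies precisely the new super $T$-paths that traverse the flipped region, while the ordinary summands $ac + bd$ account for the paths that remain combinatorially intact. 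As a consistency check, setting all odd variables to zero should recover Schiffler's classical $T$-path formula (Theorem 3.1).

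The main obstacle is the simultaneous bookkeeping of three ingredients: (i) that the minus sign in \eqref{eq:mu_mutation_neg}, combined with the default orientation, yields no negative coefficients and that the required ordering of $\mu$-invariants is exactly the positive ordering of Definition \ref{def:pos_order}; (ii) that the square-root factors in the weights of $\sigma$-edges from Definition \ref{def:weight} combine correctly across the flip, so that each assembled super $T$-path has precisely the $\lambda$-length monomial produced by Ptolemy; and (iii) that the concatenation map from pairs of sub-polygon super $T$-paths to super $T$-paths in $\Gamma_T^{a,b}$ is bijective, with no double-counting and no missing configurations. It may be cleanest to proceed in two layers: first establish the single-fan case by a secondary induction on the number of triangles in the fan, and then absorb fan segments one at a time for the multi-fan case, exploiting the triangle-by-triangle characterization of the positive ordering in Remark \ref{rem:pos_order} to maintain compatibility at each step.
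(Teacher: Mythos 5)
Your high-level strategy --- induct by applying super Ptolemy flips until $(a,b)$ is the diagonal of a quadrilateral, beginning with the single-fan case and then handling multiple fans --- coincides with the paper's, and your base-case observations and the sanity check (setting odd variables to zero recovers Schiffler's formula) are correct. However, there is a genuine gap in your inductive step: you need, but do not supply, a companion expansion formula for the $\mu$-invariants created during the flip sequence. Concretely, in the single-fan case, after flipping $(1,3),\dots,(1,k-2)$ so that $(2,k)$ lies in a quadrilateral, the super Ptolemy relation \eqref{eq:mutation} involves the $\mu$-invariant $\boxed{1,2,k-1}$, which is \emph{not} one of the initial $\mu$-invariants of $T$ --- it was produced by the preceding flips. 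Your inductive hypothesis only expands $\lambda$-lengths, so the term $\sqrt{abcd}\,\sigma\theta$ cannot be interpreted as a sum of super-step weights without an independent formula for this accumulated $\mu$-invariant. The paper handles this by running a \emph{joint} induction: Theorems~\ref{thm:mu_single_fan} and~\ref{thm:proof_zigzag} each have a part~(a), expanding the relevant new $\mu$-invariant (suitably normalized) as $\sum_i\wt(\sigma_i)$ or as a double sum of $T$-path weights times $\wt(\sigma_i)$, and a part~(b), the $\lambda$-length super-$T$-path expansion; parts~(a) and~(b) feed one another at each step. Without identifying and proving the $\mu$-invariant companion statement, your induction cannot close.

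Two smaller points. First, flipping a diagonal ``bounding the last fan segment $F_N$'' (e.g., $(c_{N-1},c_N)$) does not reduce the number of triangles crossed by $(a,b)$, since the replacement diagonal is still crossed by $(a,b)$; the crossing count drops only when you flip a diagonal incident to $a$ or $b$, which is precisely what the paper's flip sequences do. Second, your two-layer plan (single fan, then absorb fan segments one at a time) differs from the paper's three-part plan (single fan, zig-zag, combine). The zig-zag intermediate case --- where every fan consists of a single triangle --- decouples intra-fan from inter-fan structure, and the general case then reduces to a clean substitution argument (Theorem~\ref{thm:mu_single_fan}(a) supplies $\wt(\sigma'_i)=\sum_j\wt(\sigma_i^j)$, which plugs into the zig-zag formula); your direct absorption of arbitrary fan segments would require an ad hoc version of the $\mu$-invariant formula across each fan boundary, which is essentially re-deriving the zig-zag case without the benefit of its isolation.
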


When starting with a generic orientation, one can first apply a sequence of equivalence relations (reversing the arrows around a triangle and negating the $\mu$-invariant)
to get to the default orientation, with some of the $\mu$-invariants having signs changed. This is always possible due to Proposition~\ref{prop:equiv_rel}. See Example~\ref{example:main}.

Equivalently, we can state the main theorem with respect to an arbitrary choice of orientations (not necessarily the default one) as follows.

\begin{Corollary}\label{cor:main}
 With a generic choice of orientation, the $\lambda$-length of $(a,b)$ is given by
 \[ \lambda_{a,b} = \sum_{t\in\mathcal T_{a,b}}(-1)^{\inv(t)} \wt(t), \]
 where ${\rm inv}(t)$ is the number of edges in the triangulation which cross a $\tau$-edge of $t$ and are oriented opposite the default orientation.
\end{Corollary}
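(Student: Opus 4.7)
The plan is to derive Corollary~\ref{cor:main} from Theorem~\ref{thm:main} by invoking Proposition~\ref{prop:equiv_rel} to convert the given (generic) orientation to the default one, then tracking how the $\mu$-invariants, and hence the super $T$-path weights, are modified. By Remark~\ref{rem:subtriang}, I restrict attention to the sub-polygon whose triangles are crossed by $(a,b)$; label these triangles $\theta_1,\dots,\theta_n$ in the order encountered along the arc, and let $e_k$ denote the triangulation edge separating $\theta_k$ from $\theta_{k+1}$.

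Proposition~\ref{prop:equiv_rel} supplies a sequence of triangle-reversals transforming the generic orientation into the default one; let $s_k\in\{0,1\}$ record the parity of the number of times $\theta_k$ is reversed along this sequence. By Remark~\ref{rem:mu-invariants}, the default $\mu$-invariant of $\theta_k$ then equals $(-1)^{s_k}$ times its generic counterpart, while all $\lambda$-lengths are unchanged. Applying Theorem~\ref{thm:main} in the default orientation gives $\lambda_{a,b}=\sum_{t\in\mathcal{T}_{a,b}}\wt_{\mathrm{def}}(t)$. For any super $T$-path $t$, the $\mu$-invariants enter the weight only through super steps $(\sigma_i,\tau_{ij},\sigma_j)$, each contributing a product of exactly two such invariants, and the positive ordering (being purely combinatorial) is untouched by the sign flips. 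Therefore
\[
\wt_{\mathrm{def}}(t) = \Bigl(\prod_{(i,j)}(-1)^{s_i+s_j}\Bigr)\,\wt_{\mathrm{gen}}(t),
\]
where the product ranges over the super steps of $t$.

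The key combinatorial identity is that the total exponent $\sum_{(i,j)}(s_i+s_j)$ agrees with $\inv(t)$ modulo $2$. Since $e_k$ is shared by exactly the two triangles $\theta_k$ and $\theta_{k+1}$, its orientation differs from the default precisely when $s_k+s_{k+1}$ is odd. A $\tau$-edge $\tau_{ij}$ crosses exactly the edges $e_i,e_{i+1},\dots,e_{j-1}$, so a telescoping sum yields
\[
\#\bigl\{k\in\{i,\dots,j-1\}:e_k\text{ is reversed}\bigr\} \equiv \sum_{k=i}^{j-1}(s_k+s_{k+1}) \equiv s_i+s_j \pmod{2}.
\]
Axiom~(T7), applied to the $\sigma$-edges flanking consecutive $\tau$-edges of $t$, forces the index ranges $[i,j-1]$ of distinct super steps to be pairwise disjoint, so these per-step counts aggregate cleanly into $\inv(t)$ without double counting. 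Substituting back gives $\wt_{\mathrm{def}}(t)=(-1)^{\inv(t)}\,\wt_{\mathrm{gen}}(t)$ and hence the claimed formula.

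The main obstacle is ensuring the correctness of the telescoping input: one must verify that each $e_k$ is incident to only the two arc-triangles $\theta_k$ and $\theta_{k+1}$ (so triangle-reversals elsewhere cannot affect $e_k$), that the sub-polygon still satisfies the hypothesis of Proposition~\ref{prop:equiv_rel}, and that (T7) genuinely forces pairwise disjoint super-step ranges. Each of these is essentially a local check from the structural setup of the auxiliary graph, after which the telescoping bookkeeping delivers the signed formula uniformly.
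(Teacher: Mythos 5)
Your proof is correct and follows essentially the same route as the paper: restrict to the crossed sub-polygon, use Proposition~\ref{prop:equiv_rel} to pass from the given orientation to the default one while tracking sign changes of the $\mu$-invariants, observe that the sign change of a super step $(\sigma_i,\tau_{ij},\sigma_j)$ is governed by the parity of the number of reversed diagonals between $\theta_i$ and $\theta_j$, and use Axiom (T7) to ensure the super-step ranges are disjoint so these parities aggregate to $\inv(t)$. Your bookkeeping via the per-triangle reversal parities $s_k$ and the telescoping identity $\sum_{k=i}^{j-1}(s_k+s_{k+1})\equiv s_i+s_j\pmod 2$ is a minor notational variant of the paper's direct count $m_{ij}$ of wrongly-oriented diagonals between the two triangles, but the substance of the argument is identical.
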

\begin {proof}
 Label the internal diagonals $d_1,d_2,\dots,d_{n-1}$ in order of their proximity from $c_0$ to $c_N$, and similarly label
 the $\mu$-invariants of the triangles $\theta_1,\theta_2,\dots,\theta_n$.
 Suppose $d_k$ is the last diagonal whose orientation disagrees with the default orientation. Proposition~\ref{prop:equiv_rel} describes how we can
 find another orientation, representing the same spin structure, where $d_k$ is the only internal diagonal whose orientation is changed.
 From Remark~\ref{rem:mu-invariants}, we see that in doing so, $\theta_i$ must be replaced by $-\theta_i$ for all $i \leq k$.

 This process may then be repeated for all diagonals whose orientation differs from the default one. The end result is as follows: if $d_{i_1},d_{i_2},\dots,d_{i_k}$
 are all the internal diagonals whose orientation disagrees with the default orientation (and $i_1 < i_2 < \dots < i_k$), then the $\mu$-invariants of
 triangles between $d_{i_{k-1}}$ and $d_{i_k}$ are negated, those between $d_{i_{k-2}}$ and $d_{i_{k-1}}$ are not, those between $d_{i_{k-3}}$ and
 $d_{i_{k-2}}$ are negated, those between $d_{i_{k-4}}$ and $d_{i_{k-3}}$ are not, etc.

 Any super $T$-path $t$ contains some number of super-steps. Suppose $t$ contains a super-step $(\dots,\theta_i,\theta_j,\dots\,|\,\dots,\sigma_i,\tau_{ij},\sigma_j,\dots)$.
 Let $m_{ij}$ be the number of diagonals between $\theta_i$ and $\theta_j$ whose orientation disagrees with the default. If $m_{ij}$ is even, then when we change from
 the given orientation to the default one, either both $\theta_i$ and $\theta_j$ are negated, or both stay the same. In this case, the product $\theta_i \theta_j$
 is unchanged when passing to the default orientation. On the other hand, if $m_{ij}$ is odd, then
 one of them is negated, and the other stays the same, in which case the product $\theta_i \theta_j$ is negated.

 The number ${\rm inv}(t)$ is simply the sum of these $m_{ij}$'s over all super steps in the path $t$.
\end {proof}

It is apparent from the super $T$-path formulation in Theorem~\ref{thm:main} that these $\lambda$-lengths satisfy something analogous
to the Laurent phenomenon exhibited by ordinary cluster algebras. This is summarized in the following corollary.

\begin{Corollary}\label{cor:laurent}
 Let $\tilde{\theta}_i := {\rm wt}(\sigma_i) = \sqrt{\frac{x_j}{x_kx_l}} \theta_i$ $($see Definition {\rm \ref{def:weight})}.
 For any pair of vertices~$a$,~$b$ of the polygon,
 \begin{itemize}\itemsep=0pt
 \item[$(a)$] $\lambda_{ab} \in {\mathbb R}\bigl[x_1^{\pm 1}, \dots, x_{2n+3}^{\pm 1} \,|\, \tilde{\theta}_1, \dots, \tilde{\theta}_{n+1}\bigr]$.
 In other words, each term of $\lambda_{ab}$ is the product of a~Laurent monomial
 in the $x_i$'s and a monomial in the $\tilde{\theta}_i$'s.
 \item[$(b)$] $\lambda_{ab} \in {\mathbb R}\bigl[x_{1}^{\pm \frac{1}{2}}, \dots, x_{2n+3}^{\pm \frac{1}{2}} \,|\, \theta_1, \dots, \theta_{n+1}\bigr]$.
 In other words, each term of~$\lambda_{ab}$ is the product of a~Laurent monomial in the square roots of the~$x_i$'s
 and a monomial in the~$\theta_i$'s.
 \end{itemize}
\end{Corollary}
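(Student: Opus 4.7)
The plan is to deduce the corollary directly from Theorem~\ref{thm:main} by reading off the structure of $\mathrm{wt}(t)$ from Definition~\ref{def:weight}. Since $\lambda_{ab}=\sum_{t\in\mathcal{T}_{a,b}}\mathrm{wt}(t)$, it suffices to show that for every super $T$-path $t$, the monomial $\mathrm{wt}(t)$ lies in the claimed ring in each part.

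For part (b), I would simply observe that each edge of $\Gamma_T^{a,b}$ contributes a weight which already lies in $\mathbb{R}\bigl[x_1^{\pm \frac{1}{2}},\dots,x_{2n+3}^{\pm \frac{1}{2}} \,\bigm|\, \theta_1,\dots,\theta_{n+1}\bigr]$: a triangulation edge contributes $x_j^{\pm 1}$, a $\tau$-edge contributes $1$, and a $\sigma$-edge contributes $\sqrt{x_j/(x_kx_l)}\,\theta_s$. The product over edges of a super $T$-path therefore lies in the same ring, and the statement follows after summing.

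For part (a), the key point is that the only source of half-integer exponents on the $x_i$ in $\mathrm{wt}(t)$ is the $\sigma$-edges. By Definition~\ref{def:super-T} and the remarks immediately following it, the $\sigma$-edges of any super $T$-path occur exclusively inside super steps of the form $(\dots,\theta_i,\theta_j,\dots\mid\dots,\sigma_i,\tau_{ij},\sigma_j,\dots)$, so they always appear in pairs. Each such super step contributes
\[ \mathrm{wt}(\sigma_i)\,\mathrm{wt}(\tau_{ij})\,\mathrm{wt}(\sigma_j)=\pm\,\tilde{\theta}_i\tilde{\theta}_j, \]
where the sign is determined by the positive ordering of Definition~\ref{def:pos_order}. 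All remaining edges are triangulation edges contributing $x_j^{\pm 1}$. Grouping the factors within each super step into products of the $\tilde{\theta}_k$'s shows that $\mathrm{wt}(t)$ is an integer Laurent monomial in the $x_i$'s times a monomial in the $\tilde{\theta}_k$'s, which proves part (a).

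There is no substantial obstacle here: the result is essentially a bookkeeping consequence of Theorem~\ref{thm:main} and the definition of the edge weights. The only small point requiring care is verifying that the $\sigma$-edges of a super $T$-path truly come only in paired super-step configurations (so that the half-integer exponents always combine into the $\tilde{\theta}_k$), which is already guaranteed by axioms (T5), (T6) together with $\ell(t)$ being odd as stated in Definition~\ref{def:super-T}.
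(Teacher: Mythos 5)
Your proof is correct, and it matches the approach the paper implicitly takes: the paper gives no explicit proof of this corollary, stating only that it is ``apparent from the super $T$-path formulation in Theorem~\ref{thm:main},'' and your argument simply makes that reading-off precise. Part~(b) is exactly as you say. For part~(a), your reasoning is sound, though the pairing of $\sigma$-edges into super steps is not actually needed: by Definition~\ref{def:weight} the weight of each $\sigma$-edge \emph{is} the generator $\tilde{\theta}_s$, so the half-integer powers of the $x_i$'s are already absorbed individually, edge by edge, and the remaining (triangulation-edge) factors are integer powers of the $x_i$'s regardless of whether the $\tilde{\theta}$'s occur in pairs. The pairing observation (which the paper records just after Definition~\ref{def:super-T}) is a true structural fact about super $T$-paths, and it is what forces the $\tilde{\theta}$-monomials to have even degree, but it does no extra work in establishing ring membership. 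Your remark about the sign coming from the positive ordering is also correct and harmless, since signs are real scalars.
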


\input{example_main_theorem.tex}

\begin{Example}\label{example:main}\rm
An example of the expansion formula given in Theorem~\ref{thm:main} and Corollary~\ref{cor:main} is shown in Figure~\ref{fig:main_example}. Continuing from Example~\ref{example:14some},
this figure shows all super $T$-paths in $\mathcal{T}_{1,4}$. For example, to obtain the default orientation, we would need to flip the arrow on edge $(3,5)$.
We can do this by flipping all arrows of the last triangle while negating the $\mu$-invariant to $-\theta_4$.
Keeping the same positive ordering $\theta_1>\theta_2>\theta_4>\theta_3$, this would make all terms in Figure~\ref{fig:main_example} positive.

In this example, the Laurent expansion can also be written in terms of the $\tilde \theta$'s (defined in Corollary~\ref{cor:laurent}) as
\begin{gather*}
\lambda_{14}=\frac{x_1x_4}{x_8}+\frac{x_1 x_3 x_5}{x_7 x_9}+\frac{x_1x_3x_4x_6}{x_7x_8x_9}+\frac{x_2x_5x_8}{x_7x_9} +\frac{x_2x_4x_6}{x_7x_9}+\frac{x_1x_5x_8}{x_9}\tilde\theta_1\tilde\theta_2 +\frac{x_1x_4x_6}{x_9}\tilde\theta_1\tilde\theta_2\\
\hphantom{\lambda_{14}=}{}
+x_1x_4\tilde\theta_1\tilde\theta_3-x_1x_4\tilde\theta_1\tilde\theta_4 +x_1x_4\tilde\theta_2\tilde\theta_3
-x_1x_4\tilde\theta_2\tilde\theta_4
-\frac{x_1x_3x_4}{x_7}\tilde\theta_4\tilde\theta_3
-\frac{x_2x_4x_8}{x_7}\tilde\theta_4\tilde\theta_3\\
\hphantom{\lambda_{14}=}{}
-x_1x_4x_8\tilde\theta_1\tilde\theta_2\tilde\theta_4\tilde\theta_3.
\end{gather*}
\end{Example}

\subsection[Super T-paths in a single fan triangulation]{Super $\boldsymbol{T}$-paths in a single fan triangulation}

Let $P$ be an $n$-gon and $T$ a fan triangulation. Let vertex $1$ be the fan center.

\begin{Lemma}[Schiffler] \label{lemma:Schiffler}
	For $2\leq i\leq n-1$, define
	\[\alpha_{i-1}:=(2,1,i,i+1,1,n\,|\,\dots),\qquad2\leqslant i\leqslant n-1.\]
	Then $\mathcal T^0_{2,n}=\{\alpha_i\colon 1\leq i\leq n-2\}$. See Example~{\rm \ref{exmp:T-Path-fan}}.
	
	Note that in case of $i=2$ or $i=n-1$, $\alpha_{i-1}$ collapses to a shorter $T$-path after removing backtracking. In the special case of $n=3$, there is a unique $T$-path in $\mathcal T^0_{2,n}$, i.e., $\alpha_2 = \alpha_{n-1}$, which consists of the single edge~$(2,3)$.
\end{Lemma}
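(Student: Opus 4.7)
The plan is to enumerate $\mathcal T^0_{2,n}$ directly by exploiting how restrictive axioms (T1)--(T6) become in a fan triangulation. In a fan centered at $1$, the only arcs of $T$ that cross $(2,n)$ are the internal diagonals $(1,k)$ for $3\leq k\leq n-1$. Consequently, axiom (T5) forces every even-indexed edge of a $T$-path $\alpha=(a_0,\ldots,a_{2m+1}\mid t_1,\ldots,t_{2m+1})$ to have the form $t_{2j}=(1,k_j)$, and axiom (T6) arranges these in the order $k_1<k_2<\cdots<k_m$.

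The crux of the argument is a case analysis of the middle odd edges $t_{2j+1}$ for $1\leq j\leq m-1$. Encode by $\epsilon_j\in\{0,1\}$ whether $a_{2j-1}=1$ (in which case $a_{2j}=k_j$) or $a_{2j-1}=k_j$ (in which case $a_{2j}=1$). The edge $t_{2j+1}$ must then join a vertex of $\{1,k_j\}$ to a vertex of $\{1,k_{j+1}\}$. This is the heart of the argument, and I expect it to be the main obstacle, though it is ultimately short: three of the four possibilities for $(\epsilon_j,\epsilon_{j+1})$ either produce a loop or reuse $t_{2j}$ or $t_{2j+2}$, violating (T3). The only surviving case is $(\epsilon_j,\epsilon_{j+1})=(0,1)$, which forces $t_{2j+1}$ to be the boundary edge $(k_j,k_j+1)$, whence $k_{j+1}=k_j+1$.

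Imposing this constraint at both $j$ and $j+1$ would require $\epsilon_{j+1}$ to equal both $0$ and $1$, so necessarily $m\leq 2$. It then suffices to treat $m=0,1,2$ by hand. For $m=2$, combined with the endpoint constraints (that $t_1=(2,a_1)$ and $t_5=(a_4,n)$ be arcs of $T$), we obtain exactly $(2,1,k_1,k_1+1,1,n)$ for each $k_1\in\{3,\ldots,n-2\}$, matching $\alpha_{i-1}$ with $i\in\{3,\ldots,n-2\}$. For $m=1$, a short analysis of when $t_1=(2,a_1)$ and $t_3=(a_2,n)$ are triangulation edges yields exactly two paths, coinciding with the collapses of $\alpha_1$ (using $(2,3)$) and $\alpha_{n-2}$ (using $(n-1,n)$). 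The case $m=0$ occurs only when $n=3$, matching the stated special case. Combining these, $\mathcal T^0_{2,n}=\{\alpha_{i-1}\colon 2\leq i\leq n-1\}$.
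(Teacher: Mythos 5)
The paper does not supply its own proof of Lemma~\ref{lemma:Schiffler}: it is stated with attribution to Schiffler, as a special case of the general $T$-path expansion in \cite{schiffler_08}, so there is no in-paper argument to compare against. Your direct enumeration is correct and self-contained. The key observation---that in a fan centered at $1$ every even step is forced to be some $(1,k_j)$ with $k_1 < k_2 < \cdots < k_m$, that the markers $\epsilon_j$ must satisfy $(\epsilon_j,\epsilon_{j+1})=(0,1)$ at each interior odd step to avoid loops and edge reuse, and that this both forces $k_{j+1}=k_j+1$ and caps $m$ at $2$---cleanly reduces the problem to the three small cases $m=0,1,2$. Those cases yield $(2,1,k_1,k_1{+}1,1,n)$ for $k_1\in\{3,\dots,n-2\}$ (the non-degenerate $\alpha_{i-1}$), the two collapsed paths $(2,3,1,n)$ and $(2,1,n{-}1,n)$ (the $i=2$ and $i=n-1$ degenerations), and the single edge $(2,3)$ when $n=3$, which together are exactly the set claimed. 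Compared to Schiffler's original proof, which treats arbitrary triangulations by induction on flips and then can be specialized, yours is a shorter ad hoc case analysis that exploits the fan structure directly, which is all this lemma needs.
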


\begin{Example}[ordinary $T$-paths of a fan]\label{exmp:T-Path-fan}\rm
	The following are the ordinary $T$-paths of a fan triangulation of an octagon. Notice that each of these $T$-paths surrounds (and is in bijection with) one of the triangles of $T$, as illustrated in yellow below:
\begin{center}
\begin{tabular}{cccccc}
 \begin{tikzpicture}[]
 	\tikzstyle{every path}=[draw] 
		\path
    node[
      regular polygon,
      regular polygon sides=8,
      draw,
      inner sep=.7cm,
    ] (T) {}
    %
    (T.corner 1) node[above] {$1$}
    (T.corner 2) node[above] {$2$}
    (T.corner 3) node[left] {$3$}
    (T.corner 4) node[left] {$4$}
    (T.corner 5) node[below] {$5$}
    (T.corner 6) node[below] {$6$}
    (T.corner 7) node[right] {$7$}
    (T.corner 8) node[right] {$8$}
    ;
    
    \draw [fill=yellow, nearly transparent] (T.corner 1) to (T.corner 2) to (T.corner 3) to cycle;
    
    \foreach \i in {3,...,7}{
    \draw[-] (T.corner 1) to (T.corner \i);
    }
   
    \draw [color=red, thick] (T.corner 2) to (T.corner 3);
    \draw [color=blue, thick] (T.corner 1) to (T.corner 3);
    \draw [color=red, thick] (T.corner 1) to (T.corner 8);

    \end{tikzpicture}
 &
  \begin{tikzpicture}[]
 	\tikzstyle{every path}=[draw] 
		\path
    node[
      regular polygon,
      regular polygon sides=8,
      draw,
      inner sep=.7cm,
    ] (T) {}
    %
    (T.corner 1) node[above] {$1$}
    (T.corner 2) node[above] {$2$}
    (T.corner 3) node[left] {$3$}
    (T.corner 4) node[left] {$4$}
    (T.corner 5) node[below] {$5$}
    (T.corner 6) node[below] {$6$}
    (T.corner 7) node[right] {$7$}
    (T.corner 8) node[right] {$8$}
    ;
    
    \draw [fill=yellow, nearly transparent] (T.corner 1) to (T.corner 4) to (T.corner 3) to cycle;
    
    \foreach \i in {3,...,7}{
    \draw[-] (T.corner 1) to (T.corner \i);
    }
   
    \draw [color=red, thick] (T.corner 2) to (T.corner 1);
    \draw [color=blue, thick] (T.corner 1) to (T.corner 3);
    \draw [color=red, thick] (T.corner 3) to (T.corner 4);
    \draw [color=blue, thick] (T.corner 1) to (T.corner 4);
    \draw [color=red, thick] (T.corner 1) to (T.corner 8);
    
    \end{tikzpicture}
 &
\begin{tikzpicture}[]
 	\tikzstyle{every path}=[draw] 
		\path
    node[
      regular polygon,
      regular polygon sides=8,
      draw,
      inner sep=.7cm,
    ] (T) {}
    %
    (T.corner 1) node[above] {$1$}
    (T.corner 2) node[above] {$2$}
    (T.corner 3) node[left] {$3$}
    (T.corner 4) node[left] {$4$}
    (T.corner 5) node[below] {$5$}
    (T.corner 6) node[below] {$6$}
    (T.corner 7) node[right] {$7$}
    (T.corner 8) node[right] {$8$}
    ;
    
    \draw [fill=yellow, nearly transparent] (T.corner 1) to (T.corner 4) to (T.corner 5) to cycle;
    
    \foreach \i in {3,...,7}{
    \draw[-] (T.corner 1) to (T.corner \i);
    }
   
    \draw [color=red, thick] (T.corner 2) to (T.corner 1);
    \draw [color=blue, thick] (T.corner 1) to (T.corner 5);
    \draw [color=red, thick] (T.corner 5) to (T.corner 4);
    \draw [color=blue, thick] (T.corner 1) to (T.corner 4);
    \draw [color=red, thick] (T.corner 1) to (T.corner 8);
    
    \end{tikzpicture}
 &
 \begin{tikzpicture}[]
 	\tikzstyle{every path}=[draw] 
		\path
    node[
      regular polygon,
      regular polygon sides=8,
      draw=none,
      inner sep=.6cm,
    ] (T) {}
    %
    (T.corner 1) node[above] {}
    (T.corner 2) node[above] {}
    (T.corner 3) node[left] {}
    (T.corner 4) node[left] {}
    (T.corner 5) node[below] {}
    (T.corner 6) node[below] {}
    (T.corner 7) node[right] {}
    (T.corner 8) node[right] {}
    ;
    \coordinate (a) at ($0.35*(T.corner 1)+0.15*(T.corner 6)+0.15*(T.corner 5)+0.35*(T.corner 2)$);
    \node at (a) {$\cdots\cdot$};
    \end{tikzpicture}
 &
 \begin{tikzpicture}[]
 	\tikzstyle{every path}=[draw] 
		\path
    node[
      regular polygon,
      regular polygon sides=8,
      draw,
      inner sep=.7cm,
    ] (T) {}
    %
    (T.corner 1) node[above] {$1$}
    (T.corner 2) node[above] {$2$}
    (T.corner 3) node[left] {$3$}
    (T.corner 4) node[left] {$4$}
    (T.corner 5) node[below] {$5$}
    (T.corner 6) node[below] {$6$}
    (T.corner 7) node[right] {$7$}
    (T.corner 8) node[right] {$8$}
    ;
    
    \draw [fill=yellow, nearly transparent] (T.corner 1) to (T.corner 7) to (T.corner 8) to cycle;
    
    \foreach \i in {3,...,7}{
    \draw[-] (T.corner 1) to (T.corner \i);
    }
   
    \draw [color=red, thick] (T.corner 2) to (T.corner 1);
    \draw [color=blue, thick] (T.corner 1) to (T.corner 7);
    \draw [color=red, thick] (T.corner 7) to (T.corner 8);
    
    \end{tikzpicture}
    
\end{tabular}
\end{center}
\end{Example}

\begin{Lemma}\label{lm:super-path-single-fan}
	Every super $T$-path in $\mathcal T^1_{2,n}$ is of the following form
	\[(2,1,\theta_i,\theta_j,1,n\,|\,x_1,\sigma_i,\tau_{ij},\sigma_j,x_n)\]
	for $1\leq i<j\leq n-1$. See Example~{\rm \ref{exmp:super-T-path-fan}}.
\end{Lemma}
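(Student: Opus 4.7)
I would give a direct classification argument based on two structural observations: (a)~in a single-fan triangulation with center vertex~$1$, every $\sigma$-edge has vertex~$1$ as an endpoint, so every super step $\sigma_i\tau_{ij}\sigma_j$ begins and ends at~$1$; and (b)~as noted immediately after Definition~\ref{def:super-T}, every $\sigma$- and every $\tau$-edge in a super $T$-path appears inside a super step. Consequently, every element of $\mathcal T^1_{2,n}$ factors uniquely as a concatenation
\[ P_0\cdot S_1\cdot P_1\cdot S_2\cdots S_s\cdot P_s, \]
where each $S_i$ is a super step and each $P_i$ is a (possibly trivial) sequence of triangulation edges only. By~(a), each intermediate $P_i$ ($0<i<s$) runs from~$1$ to~$1$, $P_0$ runs from~$2$ to~$1$, and $P_s$ runs from~$1$ to~$n$; a parity count on the starting positions of the super steps together with axiom~(T4) forces every $P_i$ to have odd length.

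The main work would be to show $s=1$ and $P_0=(x_1)$, $P_s=(x_n)$. For $s=1$: an intermediate $P_i$ of length~$1$ would be a self-loop at~$1$ (impossible); for length~$\ge 3$, the first edge must be an odd triangulation edge $(1,m)$ with $m\neq 1$, and the next edge must be an even triangulation edge from~$m$ crossing~$(2,n)$. But in the fan, the \emph{only} triangulation edge incident to a non-center vertex $m\in\{3,\dots,n-1\}$ crossing $(2,n)$ is the diagonal $(1,m)$ itself, which is now forbidden by~(T3); at $m\in\{2,n\}$ no crossing edge exists at all, and $\sigma$-edges are forbidden inside~$P_i$ by~(b). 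This contradiction forces $s=1$. For the outer segments: if $P_0$ has length~$\ge 3$, I would case-split on $t_1\in\{x_1,(2,3)\}$, and in each case the same dead-end mechanism---the next even edge after a diagonal $(1,k)$ must be a diagonal $(1,k\pm 1)$, where the $-$ sign violates (T7) on the ordering of crossings and the $+$ sign returns the path to~$1$ after an even number of edges, contradicting odd length---rules it out. An identical argument, reading the path from~$n$ backward, handles $P_s$.

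At this point the super $T$-path has the form $(2,1,\theta_i,\theta_j,1,n\mid x_1,\sigma_i,\tau_{ij},\sigma_j,x_n)$, and axiom~(T7) applied to the crossings of $\sigma_i$ and $\sigma_j$ at $\theta_i$ and $\theta_j$ along $(2,n)$ forces $i<j$, giving the claimed form. The main obstacle is the dead-end analysis above; its success rests on the extreme rigidity of a single fan, namely that every non-center vertex is incident to \emph{exactly one} edge crossing $(2,n)$---the diagonal to the center---so that once that edge has been used as the arrival edge, no further crossing from that vertex is available.
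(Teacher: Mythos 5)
Your proof is correct, and it rests on exactly the same structural observation that drives the paper's argument: in a single fan with center $1$, every $\sigma$-edge is incident to $1$, so every super step begins and ends at the center, and the rigidity of the fan (each non-center vertex has a unique edge crossing $(2,n)$) shuts down any wandering. Your version is in fact more explicit than the paper's terse proof — you decompose into $P_0 S_1 P_1\cdots S_s P_s$, use the parity of $\sigma$-positions to force each $P_i$ to have odd length, and rule out $s>1$ and $|P_0|,|P_s|>1$ by the dead-end argument, whereas the paper simply asserts that the first step is $(2,1)$ and that ``clearly'' the tail is the single edge $(1,n)$; this makes yours a cleaner record of the same reasoning (modulo the slightly loose phrasing of ``the next even edge after a diagonal $(1,k)$ must be $(1,k\pm1)$,'' which is a shorthand for the two-step bounce $(1,k)\to(k,k\pm1)\to(k\pm1,1)$).
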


We illustrate this lemma with an example before proving it.

\begin{Example}\label{exmp:super-T-path-fan}\rm \allowdisplaybreaks
 The following are the ${4 \choose 2}=6$ different non-ordinary super $T$-paths (elements of $\mathcal{T}^1_{2,6}$) of a single-fan hexagon:
\begin{center}
\begin{center}
\begin{tabular}{ccc}

\begin{tikzpicture}
	\tikzstyle{every path}=[draw] 
		\path
    node[
      regular polygon,
      regular polygon sides=6,
      draw,
      inner sep=0.9cm,
    ] (hexagon) {}
    %
    (hexagon.corner 1) node[above] {$1$}
    (hexagon.corner 2) node[above] {$2$}
    (hexagon.corner 3) node[left] {$3$}
    (hexagon.corner 4) node[below] {$4$}
    (hexagon.corner 5) node[below] {$5$}
    (hexagon.corner 6) node[right] {$6$}
    ;

  \foreach \i in {3,...,5}{
  \draw [-] (hexagon.corner 1) to (hexagon.corner \i);
  }
  
  \coordinate (m1) at ($0.35*(hexagon.corner 2)+0.25*(hexagon.corner 1)+0.4*(hexagon.corner 3)$);
  \coordinate (m2) at ($0.4*(hexagon.corner 3)+0.4*(hexagon.corner 1)+0.3*(hexagon.corner 4)$);
  \coordinate (m3) at ($0.4*(hexagon.corner 5)+0.4*(hexagon.corner 1)+0.3*(hexagon.corner 4)$);
  \coordinate (m4) at ($0.4*(hexagon.corner 5)+0.25*(hexagon.corner 1)+0.4*(hexagon.corner 6)$);

  \draw [name path = m ,opacity=1] (m1) to [out=-90,in=120] (m2) to [out=-180+120,in=180] (m3) to [out=-180+180,in=180+30] (m4);%
  \foreach \i in {1,...,4}{
  \draw [-] (m\i) to (hexagon.corner 1);}

  
  \draw [red,thick] (hexagon.corner 2) to (hexagon.corner 1);
  \draw [blue,thick] (m1) to (hexagon.corner 1);
  \draw [red,thick] (m1) to [out=-90,in=120] (m2);
  \draw [blue,thick] (m2) to (hexagon.corner 1);
  \draw [red,thick] (hexagon.corner 6) to (hexagon.corner 1);

\end{tikzpicture}

&

\begin{tikzpicture}
	\tikzstyle{every path}=[draw] 
		\path
    node[
      regular polygon,
      regular polygon sides=6,
      draw,
      inner sep=0.9cm,
    ] (hexagon) {}
    %
    (hexagon.corner 1) node[above] {$1$}
    (hexagon.corner 2) node[above] {$2$}
    (hexagon.corner 3) node[left] {$3$}
    (hexagon.corner 4) node[below] {$4$}
    (hexagon.corner 5) node[below] {$5$}
    (hexagon.corner 6) node[right] {$6$}
    ;

  \foreach \i in {3,...,5}{
  \draw [-] (hexagon.corner 1) to (hexagon.corner \i);
  }
  
  \coordinate (m1) at ($0.35*(hexagon.corner 2)+0.25*(hexagon.corner 1)+0.4*(hexagon.corner 3)$);
  \coordinate (m2) at ($0.4*(hexagon.corner 3)+0.4*(hexagon.corner 1)+0.3*(hexagon.corner 4)$);
  \coordinate (m3) at ($0.4*(hexagon.corner 5)+0.4*(hexagon.corner 1)+0.3*(hexagon.corner 4)$);
  \coordinate (m4) at ($0.4*(hexagon.corner 5)+0.25*(hexagon.corner 1)+0.4*(hexagon.corner 6)$);

  \draw [name path = m ,opacity=1] (m1) to [out=-90,in=120] (m2) to [out=-180+120,in=180] (m3) to [out=-180+180,in=180+30] (m4);%
  \foreach \i in {1,...,4}{
  \draw [-] (m\i) to (hexagon.corner 1);}

  
  \draw [red,thick] (hexagon.corner 2) to (hexagon.corner 1);
  \draw [blue,thick] (m3) to (hexagon.corner 1);
  \draw [red,thick] (m2) to [out=-180+120,in=180] (m3) ;
  \draw [blue,thick] (m2) to (hexagon.corner 1);
  \draw [red,thick] (hexagon.corner 6) to (hexagon.corner 1);

\end{tikzpicture} 
  
  & 
  
\begin{tikzpicture}
	\tikzstyle{every path}=[draw] 
		\path
    node[
      regular polygon,
      regular polygon sides=6,
      draw,
      inner sep=0.9cm,
    ] (hexagon) {}
    %
    (hexagon.corner 1) node[above] {$1$}
    (hexagon.corner 2) node[above] {$2$}
    (hexagon.corner 3) node[left] {$3$}
    (hexagon.corner 4) node[below] {$4$}
    (hexagon.corner 5) node[below] {$5$}
    (hexagon.corner 6) node[right] {$6$}
    ;

  \foreach \i in {3,...,5}{
  \draw [-] (hexagon.corner 1) to (hexagon.corner \i);
  }
  
  \coordinate (m1) at ($0.35*(hexagon.corner 2)+0.25*(hexagon.corner 1)+0.4*(hexagon.corner 3)$);
  \coordinate (m2) at ($0.4*(hexagon.corner 3)+0.4*(hexagon.corner 1)+0.3*(hexagon.corner 4)$);
  \coordinate (m3) at ($0.4*(hexagon.corner 5)+0.4*(hexagon.corner 1)+0.3*(hexagon.corner 4)$);
  \coordinate (m4) at ($0.4*(hexagon.corner 5)+0.25*(hexagon.corner 1)+0.4*(hexagon.corner 6)$);

  \draw [name path = m ,opacity=1] (m1) to [out=-90,in=120] (m2) to [out=-180+120,in=180] (m3) to [out=-180+180,in=180+30] (m4);%
  \foreach \i in {1,...,4}{
  \draw [-] (m\i) to (hexagon.corner 1);}

  
  \draw [red,thick] (hexagon.corner 2) to (hexagon.corner 1);
  \draw [blue,thick] (m3) to (hexagon.corner 1);
  \draw [red,thick] (m3) to [out=-180+180,in=180+30] (m4) ;
  \draw [blue,thick] (m4) to (hexagon.corner 1);
  \draw [red,thick] (hexagon.corner 6) to (hexagon.corner 1);

\end{tikzpicture} 

\\

\begin{tikzpicture}
	\tikzstyle{every path}=[draw] 
		\path
    node[
      regular polygon,
      regular polygon sides=6,
      draw,
      inner sep=0.9cm,
    ] (hexagon) {}
    %
    (hexagon.corner 1) node[above] {$1$}
    (hexagon.corner 2) node[above] {$2$}
    (hexagon.corner 3) node[left] {$3$}
    (hexagon.corner 4) node[below] {$4$}
    (hexagon.corner 5) node[below] {$5$}
    (hexagon.corner 6) node[right] {$6$}
    ;

  \foreach \i in {3,...,5}{
  \draw [-] (hexagon.corner 1) to (hexagon.corner \i);
  }
  
  \coordinate (m1) at ($0.35*(hexagon.corner 2)+0.25*(hexagon.corner 1)+0.4*(hexagon.corner 3)$);
  \coordinate (m2) at ($0.4*(hexagon.corner 3)+0.4*(hexagon.corner 1)+0.3*(hexagon.corner 4)$);
  \coordinate (m3) at ($0.4*(hexagon.corner 5)+0.4*(hexagon.corner 1)+0.3*(hexagon.corner 4)$);
  \coordinate (m4) at ($0.4*(hexagon.corner 5)+0.25*(hexagon.corner 1)+0.4*(hexagon.corner 6)$);

  \draw [name path = m ,opacity=1] (m1) to [out=-90,in=120] (m2) to [out=-180+120,in=180] (m3) to [out=-180+180,in=180+30] (m4);%
  \foreach \i in {1,...,4}{
  \draw [-] (m\i) to (hexagon.corner 1);}

  
  \draw [red,thick] (hexagon.corner 2) to (hexagon.corner 1);
  \draw [blue,thick] (m1) to (hexagon.corner 1);
  \draw [red,thick] (m1) to [out=-90,in=120] (m2) to [out=-180+120,in=180] (m3);
  \draw [blue,thick] (m3) to (hexagon.corner 1);
  \draw [red,thick] (hexagon.corner 6) to (hexagon.corner 1);

\end{tikzpicture}

  & 

\begin{tikzpicture}
	\tikzstyle{every path}=[draw] 
		\path
    node[
      regular polygon,
      regular polygon sides=6,
      draw,
      inner sep=0.9cm,
    ] (hexagon) {}
    %
    (hexagon.corner 1) node[above] {$1$}
    (hexagon.corner 2) node[above] {$2$}
    (hexagon.corner 3) node[left] {$3$}
    (hexagon.corner 4) node[below] {$4$}
    (hexagon.corner 5) node[below] {$5$}
    (hexagon.corner 6) node[right] {$6$}
    ;

  \foreach \i in {3,...,5}{
  \draw [-] (hexagon.corner 1) to (hexagon.corner \i);
  }
  
  \coordinate (m1) at ($0.35*(hexagon.corner 2)+0.25*(hexagon.corner 1)+0.4*(hexagon.corner 3)$);
  \coordinate (m2) at ($0.4*(hexagon.corner 3)+0.4*(hexagon.corner 1)+0.3*(hexagon.corner 4)$);
  \coordinate (m3) at ($0.4*(hexagon.corner 5)+0.4*(hexagon.corner 1)+0.3*(hexagon.corner 4)$);
  \coordinate (m4) at ($0.4*(hexagon.corner 5)+0.25*(hexagon.corner 1)+0.4*(hexagon.corner 6)$);

  \draw [name path = m ,opacity=1] (m1) to [out=-90,in=120] (m2) to [out=-180+120,in=180] (m3) to [out=-180+180,in=180+30] (m4);%
  \foreach \i in {1,...,4}{
  \draw [-] (m\i) to (hexagon.corner 1);}

  
  \draw [red,thick] (hexagon.corner 2) to (hexagon.corner 1);
  \draw [blue,thick] (m4) to (hexagon.corner 1);
  \draw [red,thick] (m2) to [out=-180+120,in=180] (m3) to [out=-180+180,in=180+30] (m4);
  \draw [blue,thick] (m2) to (hexagon.corner 1);
  \draw [red,thick] (hexagon.corner 6) to (hexagon.corner 1);

\end{tikzpicture}   
  
  & 
  
  \begin{tikzpicture}
	\tikzstyle{every path}=[draw] 
		\path
    node[
      regular polygon,
      regular polygon sides=6,
      draw,
      inner sep=0.9cm,
    ] (hexagon) {}
    %
    (hexagon.corner 1) node[above] {$1$}
    (hexagon.corner 2) node[above] {$2$}
    (hexagon.corner 3) node[left] {$3$}
    (hexagon.corner 4) node[below] {$4$}
    (hexagon.corner 5) node[below] {$5$}
    (hexagon.corner 6) node[right] {$6$}
    ;

  \foreach \i in {3,...,5}{
  \draw [-] (hexagon.corner 1) to (hexagon.corner \i);
  }
  
  \coordinate (m1) at ($0.35*(hexagon.corner 2)+0.25*(hexagon.corner 1)+0.4*(hexagon.corner 3)$);
  \coordinate (m2) at ($0.4*(hexagon.corner 3)+0.4*(hexagon.corner 1)+0.3*(hexagon.corner 4)$);
  \coordinate (m3) at ($0.4*(hexagon.corner 5)+0.4*(hexagon.corner 1)+0.3*(hexagon.corner 4)$);
  \coordinate (m4) at ($0.4*(hexagon.corner 5)+0.25*(hexagon.corner 1)+0.4*(hexagon.corner 6)$);

  \draw [name path = m ,opacity=1, red, thick] (m1) to [out=-90,in=120] (m2) to [out=-180+120,in=180] (m3) to [out=-180+180,in=180+30] (m4);%
  \foreach \i in {1,...,4}{
  \draw [-] (m\i) to (hexagon.corner 1);}

  
  \draw [red,thick] (hexagon.corner 2) to (hexagon.corner 1);
  \draw [blue,thick] (m4) to (hexagon.corner 1);
  \draw [blue,thick] (m1) to (hexagon.corner 1);
  \draw [red,thick] (hexagon.corner 6) to (hexagon.corner 1);

\end{tikzpicture} 
\end{tabular}
\end{center}
\end{center}
In this example, $\lambda_{26}$ can be expressed as
 \begin{gather*}
 \lambda_{26} = \frac{\lambda_{23}\lambda_{16}}{\lambda_{31}}
 + \frac{\lambda_{21}\lambda_{34}\lambda_{16}}{\lambda_{13}\lambda_{41}}
 + \frac{\lambda_{21}\lambda_{45}\lambda_{16}}{\lambda_{14}\lambda_{51}}
 + \frac{\lambda_{21}\lambda_{56}}{\lambda_{15}} \\
 \phantom{\lambda_{26}=} + \lambda_{21} \sqrt{\frac{\lambda_{23}}{\lambda_{12}\lambda_{13}}} \, \boxed{123} \, \sqrt{\frac{\lambda_{34}}{\lambda_{13}\lambda_{14}}} \, \boxed{134} \, \lambda_{16}
+ \lambda_{21} \sqrt{\frac{\lambda_{34}}{\lambda_{13}\lambda_{14}}} \, \boxed{134} \, \sqrt{\frac{\lambda_{45}}{\lambda_{14}\lambda_{15}}} \, \boxed{145} \, \lambda_{16} \\
 \phantom{\lambda_{26}=} + \lambda_{21} \sqrt{\frac{\lambda_{45}}{\lambda_{14}\lambda_{15}}} \, \boxed{145} \, \sqrt{\frac{\lambda_{56}}{\lambda_{15}\lambda_{16}}} \, \boxed{156} \, \lambda_{16}
+ \lambda_{21} \sqrt{\frac{\lambda_{23}}{\lambda_{12}\lambda_{13}}} \, \boxed{123} \, \sqrt{\frac{\lambda_{45}}{\lambda_{14}\lambda_{15}}} \, \boxed{145} \, \lambda_{16} \\
 \phantom{\lambda_{26}=} + \lambda_{21} \sqrt{\frac{\lambda_{34}}{\lambda_{13}\lambda_{14}}} \; \boxed{134} \; \sqrt{\frac{\lambda_{56}}{\lambda_{15}\lambda_{16}}} \, \boxed{156} , \lambda_{16}
+ \lambda_{21} \sqrt{\frac{\lambda_{23}}{\lambda_{12}\lambda_{13}}} \, \boxed{123} \, \sqrt{\frac{\lambda_{56}}{\lambda_{15}\lambda_{16}}} \, \boxed{156} \, \lambda_{16}.
 \end{gather*}
 The first four terms are the weights of all ordinary $T$-paths (as described in Example~\ref{exmp:T-Path-fan}),
 and the remaining six terms correspond to the $T$-paths pictured above (in the same order).
 It can also be written as
 \begin{gather*}
 \lambda_{26} = \frac{\lambda_{23}\lambda_{16}}{\lambda_{31}}
 + \frac{\lambda_{21}\lambda_{34}\lambda_{16}}{\lambda_{13}\lambda_{41}}
 + \frac{\lambda_{21}\lambda_{45}\lambda_{16}}{\lambda_{14}\lambda_{51}}
 + \frac{\lambda_{21}\lambda_{56}}{\lambda_{15}}
 + \lambda_{16}\lambda_{21} \, \widetilde{\boxed{123}} \; \widetilde{\boxed{134}}
 + \lambda_{16}\lambda_{21} \, \widetilde{\boxed{134}} \; \widetilde{\boxed{145} }\\
 \hphantom{\lambda_{26} =}{}
 + \lambda_{21}\lambda_{16}\, \widetilde{\boxed{145}} \; \widetilde{\boxed{156} }
 + \lambda_{16}\lambda_{21} \, \widetilde{\boxed{123}} \; \widetilde{ \boxed{145}}
 + \lambda_{16}\lambda_{21} \, \widetilde{\boxed{134}}\;\widetilde{\boxed{156}}
 + \lambda_{16}\lambda_{21}\,
 \widetilde{ \boxed{123} }\, \widetilde{ \boxed{156}}\,,
 \end{gather*}
 where the $\widetilde{\boxed{ijk}}$ are the weights of the corresponding $\sigma$-edges (following the notation of Corollary~\ref{cor:laurent}).
\end{Example}

\begin{proof}[Proof of Lemma~\ref{lm:super-path-single-fan}]
 A $\mathcal T^1_{2,n}$-path must use one of the $\sigma$-edges, therefore the first step in the super $T$-path needs to be $(2,1)$.
 Then after traveling through $\sigma_i$, what follows must be $\tau_{ij}$ which leads the $T$-path to another internal vertex $\theta_j$.
 The next step is an even step hence has to be a $\sigma$-edge which will take us to vertex $1$:
 \[(2,1,\theta_i,\theta_j,1,\dots\,|\,x_1,\sigma_i,\tau_{ij},\sigma_j,\dots).\]
 Now we are at vertex $1$ and have completed even number of steps, therefore the rest of the this super $T$-path must be a super $T$-path
 from~$1$ to~$n$~-- clearly there is only one possibility which is the single edge $(1,n)$.
 Hence a super $T$-path in $\mathcal T^1_{2,n}$ must have the form
 \begin{gather*}
 (2,1,\theta_i,\theta_j,1,\dots\,|\,x_1,\sigma_i,\tau_{ij},\sigma_j,\dots)+(\dots,1,n\,|\,\dots,x_n)\\
 \qquad{} =(2,1,\theta_i,\theta_j,1,n\,|\,x_1,\sigma_i,\tau_{ij},\sigma_j,x_n).\tag*{\qed}
 \end{gather*}\renewcommand{\qed}{}
\end{proof}

\section{Proof of Theorem~\ref{thm:main}}
\label{sec:proof}

In this section we prove our main theorem.
It turns out that the default orientation guarantees a positive sign on all terms of the expansion of
$\lambda$-lengths (Theorem~\ref{thm:main}), and is also preserved by induction.

Our proof has three parts: we first prove the case of single fan triangulations, and then prove the case of zig-zag triangulations. Finally we prove Theorem~\ref{thm:main} in full generality by combining the two cases mentioned above.

Before proving our theorem, we state the following results that will be used in our proofs.

\begin{Proposition} \label{lemma:theta32}
 Let $A$, $\beta,$ and $\Sigma$ be elements in the super algebra $\mathcal A$, which for convenience, we assume is written as
 $\mathcal A = {\mathbb R}\bigl[x_1^{\pm\frac{1}{2}},\dots,x_{2n+3}^{\pm\frac{1}{2}} \,|\, \theta_1,\dots,\theta_{n+1}\bigr]$ as in Definition~{\rm \ref{def:weight}}.
 Further, we assume that $A$ is an even element with a non-zero body,\footnote{The \emph{body} of an element $A$ of super algebra $\mathcal A$ is the constant term when expanded out in terms of the $\theta_i$'s.}
 and that both $\beta$ and $\Sigma$ are odd elements of~$\mathcal A$. Then we have
 \[\sqrt{A+\beta\Sigma}= \sqrt{A}+\frac{\beta}{2\sqrt{A}}\Sigma,\]
 where the square root of $A$ is taken to be the positive square root.
\end{Proposition}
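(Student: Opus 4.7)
The proof strategy is to verify the identity directly by squaring the proposed right-hand side. The main super-algebraic input is the observation that any odd element of $\mathcal A$ squares to zero: if $\gamma$ is odd, then $\gamma\cdot\gamma = (-1)^{|\gamma|^2}\gamma\cdot\gamma = -\gamma^2$, forcing $\gamma^2=0$. Applying this to $\beta$ and $\Sigma$ separately, together with the fact that two odd elements anticommute, one computes
\[
(\beta\Sigma)^2 \;=\; \beta(\Sigma\beta)\Sigma \;=\; -\beta^2\Sigma^2 \;=\; 0.
\]

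Next I would set $X := \sqrt{A} + \frac{\beta}{2\sqrt{A}}\Sigma$, where $\sqrt{A}$ denotes the positive square root of the even element $A$. Because $A$ has nonzero body, $\sqrt{A}$ may be defined unambiguously by writing $A = A_0 + N$ with $A_0 > 0$ the body and $N$ even and nilpotent, then applying the (terminating) binomial series to $\sqrt{A_0}\sqrt{1+N/A_0}$. The result $\sqrt{A}$ is itself even, hence central in $\mathcal A$. Using centrality to move $\sqrt{A}$ past $\beta$ and $\Sigma$, a direct expansion gives
\[
X^2 \;=\; A \;+\; \sqrt{A}\cdot\tfrac{\beta\Sigma}{2\sqrt{A}} \;+\; \tfrac{\beta\Sigma}{2\sqrt{A}}\cdot\sqrt{A} \;+\; \Bigl(\tfrac{\beta\Sigma}{2\sqrt{A}}\Bigr)^{\!2} \;=\; A + \beta\Sigma,
\]
where the first two cross terms combine to $\beta\Sigma$ and the last term vanishes by the nilpotency $(\beta\Sigma)^2=0$ established above.

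To conclude, I would verify that $X$ is in fact the \emph{positive} square root. Since $\beta\Sigma$ has zero body (being a product of odd generators), the body of $A+\beta\Sigma$ equals $A_0$, whose positive square root is $\sqrt{A_0}$; and indeed the body of $X$ is $\sqrt{A_0}$ by construction. Thus $X=\sqrt{A+\beta\Sigma}$, completing the verification.

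There is no real obstacle here: the whole argument is a one-line calculation once $(\beta\Sigma)^2=0$ is noted. The only point requiring attention is the evenness, and therefore centrality, of $\sqrt{A}$ in the super algebra, which is precisely what lets the two cross terms in $X^2$ add rather than cancel.
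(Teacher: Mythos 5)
Your proof is correct and takes essentially the same approach as the paper's: square the proposed right-hand side, use that $\sqrt{A}$ is even (hence central) to combine the cross terms into $\beta\Sigma$, and observe that the last term vanishes. You spell out a few extra details the paper leaves implicit (why $(\beta\Sigma)^2=0$, the binomial-series construction of $\sqrt{A}$, and the check that the body matches so that the positive branch is chosen), but the core computation is identical.
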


\begin{Remark}\rm
 Since $A$ is an even element, its positive square root is well-defined as the choice such that the body of $\sqrt{A}$ is the positive square root of the body of $A$.
\end{Remark}

\begin{proof}
 Squaring the right-hand side:
 \[\left(\sqrt{A}+\frac{\beta}{2\sqrt{A}}\Sigma\right)^2=A+2\sqrt{A}\cdot \frac{\beta}{2\sqrt{A}}\cdot \Sigma +\left(\frac{\beta}{2\sqrt{A}}\Sigma\right)^2=A+\beta\Sigma.\]
 This clearly equals the square of the left-hand side.
\end{proof}

Using Lemma~\ref{lemma:theta32}, we can rewrite the super Ptolemy relations in a more symmetrical form.
\begin{Proposition}
The super Ptolemy relations described in Figure~{\rm \ref{fig:super_ptolemy}} can be written as follows
\begin{gather}
\theta'\sqrt{ef} =	\theta\sqrt{bd}+\sigma \sqrt{ac},\label{eq:5}\\
\sigma'\sqrt{ef} =\sigma\sqrt{bd}-\theta\sqrt{ac}.\label{eq:6}
\end{gather}
\end{Proposition}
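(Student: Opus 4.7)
The plan is to apply Proposition~\ref{lemma:theta32} to the product $ef = ac+bd+\sqrt{abcd}\,\sigma\theta$ from equation~\eqref{eq:mutation}, taking $A = ac+bd$, $\beta = \sigma$, and $\Sigma = \sqrt{abcd}\,\theta$. This immediately yields
\[
\sqrt{ef} \;=\; \sqrt{ac+bd}\;+\;\frac{\sqrt{abcd}\,\sigma\theta}{2\sqrt{ac+bd}}.
\]
Then I would simply multiply the given expressions \eqref{eq:mu_mutation_neg} and \eqref{eq:mu_mutation_pos} for $\sigma'$ and $\theta'$ by this $\sqrt{ef}$.

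For the first identity, I compute
\[
\theta'\sqrt{ef} \;=\; \frac{\theta\sqrt{bd}+\sigma\sqrt{ac}}{\sqrt{ac+bd}}\left(\sqrt{ac+bd}+\frac{\sqrt{abcd}\,\sigma\theta}{2\sqrt{ac+bd}}\right)
\;=\; \bigl(\theta\sqrt{bd}+\sigma\sqrt{ac}\bigr)+\frac{\sqrt{abcd}\,(\theta\sqrt{bd}+\sigma\sqrt{ac})\sigma\theta}{2(ac+bd)}.
\]
The key observation is that the correction term vanishes identically using only the odd-variable relations $\theta^2=\sigma^2=0$: expanding, we have $\theta\sqrt{bd}\cdot\sigma\theta = \sqrt{bd}\,\theta\sigma\theta = -\sqrt{bd}\,\theta^2\sigma = 0$, and similarly $\sigma\sqrt{ac}\cdot\sigma\theta = \sqrt{ac}\,\sigma^2\theta = 0$. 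Thus equation~\eqref{eq:5} drops out. The calculation for $\sigma'\sqrt{ef}$ is the mirror image: the cross term $(\sigma\sqrt{bd}-\theta\sqrt{ac})\sigma\theta$ breaks into $\sqrt{bd}\,\sigma^2\theta - \sqrt{ac}\,\theta\sigma\theta$, both of which vanish, giving equation~\eqref{eq:6}.

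There is no real obstacle; the only thing to be careful about is sign-tracking when moving odd variables past one another (using $\sigma\theta = -\theta\sigma$) and remembering that even quantities like $\sqrt{ac}$, $\sqrt{bd}$ commute freely with the odd generators. The content of the proposition is essentially that the square-root identity of Proposition~\ref{lemma:theta32} converts the multiplicative super Ptolemy relation into a clean additive one, with the nilpotency of the $\mu$-invariants ensuring that no higher-order corrections appear.
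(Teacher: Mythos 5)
Your proof is correct and is essentially the same approach as the paper's: both arguments hinge on expanding $\sqrt{ef}$ via Proposition~\ref{lemma:theta32} and then killing the correction term by nilpotency of odd elements. The one minor difference is organizational: the paper first rewrites $\sqrt{abcd}\,\sigma\theta = \sqrt{abcd}\,\sigma'\theta'$ using equation~\eqref{eq:mu_cross} so that the outer factor $\theta'$ annihilates the correction in one stroke via $(\theta')^2 = 0$, then substitutes from~\eqref{eq:mu_mutation_pos}; you instead keep $\sigma\theta$ and expand the outer $\theta'$ by~\eqref{eq:mu_mutation_pos}, so the vanishing is split into the two checks $\theta\sigma\theta = 0$ and $\sigma^2\theta = 0$. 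Your route has the small advantage of not needing~\eqref{eq:mu_cross} at all, at the cost of slightly more bookkeeping; both are valid and lead to the same conclusion.
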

\begin{proof}We have
\begin{gather*}
\theta'\sqrt{ef} =\theta'\sqrt{ab+cd+\sqrt{abcd}\sigma\theta}
 =\theta'\sqrt{ab+cd+\sqrt{abcd}\sigma'\theta'}
 =\theta'\sqrt{ab+cd}
\end{gather*}
and
\begin{gather*}
	\theta' = {{ \theta\sqrt{bd}+\sigma \sqrt{ac}}\over{\sqrt{ac+bd}}},\qquad
	\theta'\sqrt{ac+bd} = \theta\sqrt{bd}+\sigma \sqrt{ac}.
\end{gather*}
	
Putting these two equations together gives equation~\eqref{eq:5}:
\begin{align*}
\theta'\sqrt{ef}= \theta\sqrt{bd}+\sigma \sqrt{ac}.
\end{align*}
Equation~\eqref{eq:6} can be derived in a similar way.
\end{proof}

\subsection{Proof of Theorem~\ref{thm:main} for a single fan}

For sake of readability, in the below, we use $\boxed{ijk}$ to denote the $\mu$-invariant associated to the triple of
ideal points $(i,j,k)$, subject to Remark~\ref{rem:mu-invariants}, while the $\lambda$-length of a pair $(i,j)$ will be
denoted $\lambda_{ij}$. We will also sometimes use $\boxed{ijk}$ to denote the internal vertex of the auxiliary graph associated to $(i,j,k)$, when talking about super $T$-paths.

First, we will prove the main theorem in the case of a single fan triangulation.

\begin{figure}[h] \centering
 \begin{tikzpicture}[decoration={
    markings,
    mark=at position 0.5 with {\arrow{>}}}
    ] 

\tikzstyle{every path}=[draw] 
		\path
    node[
      regular polygon,
      regular polygon sides=8,
      draw=none,
      inner sep=2.1cm,
    ] (T) {}
    %
    (T.corner 1) node[above] {$n$}
    (T.corner 2) node[above] {$1$}
    (T.corner 3) node[left]  {$2$}
    (T.corner 4) node[left]  {$3$}
    (T.corner 5) node[below] {$4$}
    (T.corner 6) node[below] {$5$}
    (T.corner 7) node[right] {}
    (T.corner 8) node[right] {$n-1$}
;

\draw [-] (T.corner 6) to (T.corner 5) to (T.corner 4) to (T.corner 3) to (T.corner 2) to (T.corner 1) to (T.corner 8);
\draw [dashed] (T.corner 8) to (T.corner 7) to (T.corner 6);
\foreach \i in {4,5,6,8}{
    \draw []  (T.corner 2) to (T.corner \i);
}
\draw [dashed] (T.corner 2) to (T.corner 7);

\foreach \x in {1,2,...,7}{
\draw (T.corner \x) node [fill,circle,scale=0.2] {};}

\coordinate (m1) at ($0.3*(T.corner 2)+0.3*(T.corner 4)+0.4*(T.corner 3)$);
\coordinate (m2) at ($0.33*(T.corner 5)+0.33*(T.corner 4)+0.33*(T.corner 2)$);
\coordinate (m3) at ($0.33*(T.corner 5)+0.33*(T.corner 2)+0.33*(T.corner 6)$);
\coordinate (m4) at ($0.33*(T.corner 7)+0.33*(T.corner 2)+0.33*(T.corner 6)$);
\coordinate (m5) at ($0.33*(T.corner 7)+0.33*(T.corner 2)+0.33*(T.corner 8)$);
\coordinate (m6) at ($0.4*(T.corner 1)+0.3*(T.corner 2)+0.3*(T.corner 8)$);

\draw (m1) node [fill,circle,scale=0.3] {};
\draw (m2) node [fill,circle,scale=0.3] {};
\draw (m3) node [fill,circle,scale=0.3] {};
\draw (m4) node [fill,circle,scale=0.3] {};
\draw (m5) node [fill,circle,scale=0.3] {};
\draw (m6) node [fill,circle,scale=0.3] {};

\draw (T.corner 2) -- (m1) node [near end, fill=white,inner sep=0.2] {$\sigma_1$};
\draw (T.corner 2) -- (m2) node [near end, fill=white,inner sep=0.2] {$\sigma_2$};
\draw (T.corner 2) -- (m3) node [near end, fill=white,inner sep=0.2] {$\sigma_3$};
\draw[dashed] (T.corner 2) -- (m4);
\draw[dashed] (T.corner 2) -- (m5);
\draw (T.corner 2) -- (m6) node [near end, fill=white,inner sep=0.2] {$\sigma_{n-2}$};

\end{tikzpicture}
 \caption{Proof of Theorem~\ref{thm:mu_single_fan}.} \label{fig:mu_single_fan}
\end{figure}
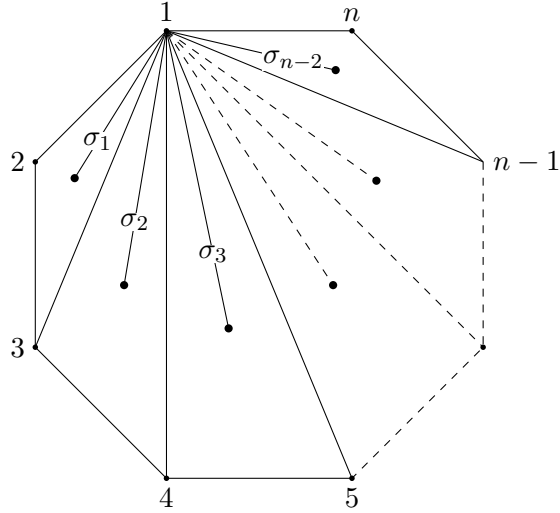

\begin{Theorem} \label{thm:mu_single_fan}
 Consider a single fan triangulation of an $n$-gon as depicted in Figure~{\rm \ref{fig:mu_single_fan}}. We continue our convention of using the default orientation,
 which for a fan triangulation means that arrows on all internal diagonals point away from the fan center.
 After performing the super Ptolemy relations for the flips of arcs $(1,3)$, $(1,4)$, $\dots$, $(1,k-1)$, we have
 \begin{enumerate}\itemsep=0pt
 \item[$(a)$] $\displaystyle \sqrt{\frac{\l2k}{\l12\l1k}} \; \boxed{12k} = \sum_{i=1}^{k-2}\wt(\sigma_i)$,
 \item[$(b)$] $\displaystyle \lambda_{2k} = \sum_{t \in \mathcal{T}_{2,k}} {\rm wt}(t)$,
 \end{enumerate}
 where ${\rm wt}(\sigma_i)$ and ${\rm wt}(t)$ are defined in Definition~{\rm \ref{def:weight}}, and as defined in Definition~{\rm \ref{def:super-T}}, $\mathcal{T}_{2,k}$ denotes the set of super $T$-paths from vertex $2$ to vertex~$k$.
\end{Theorem}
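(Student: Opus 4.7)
The plan is a simultaneous induction on $k$ for (a) and (b). For the base case $k=3$, no flips are performed, $(2,3)$ is a boundary edge, and there is only the triangle $(1,2,3)$: part~(a) is the defining equation $\sqrt{\lambda_{23}/(\lambda_{12}\lambda_{13})}\,\boxed{123}=\wt(\sigma_1)$ from Definition~\ref{def:weight}, and part~(b) is the tautology $\lambda_{23}=\lambda_{23}$, since the unique super $T$-path from $2$ to $3$ is the boundary edge.

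For the inductive step, assume (a) and (b) at step $k$ and perform one further flip of the arc $(1,k)$, producing $(2,k+1)$ inside the quadrilateral $\{1,2,k,k+1\}$. With $a=\lambda_{12}$, $b=\lambda_{2k}$, $c=\lambda_{k,k+1}$, $d=\lambda_{1,k+1}$, the default orientation assigns $\theta=\boxed{12k}$ and $\sigma=\boxed{1,k,k{+}1}$, so the super Ptolemy relations \eqref{eq:mutation} and \eqref{eq:5} apply directly. Dividing \eqref{eq:5} through by $\sqrt{\lambda_{12}\lambda_{1k}\lambda_{1,k+1}}$ produces
\[
\sqrt{\frac{\lambda_{2,k+1}}{\lambda_{12}\lambda_{1,k+1}}}\,\boxed{12(k{+}1)} \;=\; \sqrt{\frac{\lambda_{2k}}{\lambda_{12}\lambda_{1k}}}\,\boxed{12k} \;+\; \sqrt{\frac{\lambda_{k,k+1}}{\lambda_{1k}\lambda_{1,k+1}}}\,\boxed{1,k,k{+}1},
\]
and its right side equals $\sum_{i=1}^{k-2}\wt(\sigma_i)+\wt(\sigma_{k-1})=\sum_{i=1}^{k-1}\wt(\sigma_i)$ by the inductive hypothesis of (a) and Definition~\ref{def:weight}, giving (a) at $k+1$.

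For (b), the expanded Ptolemy relation \eqref{eq:mutation} reads
\[
\lambda_{2,k+1} \;=\; \frac{\lambda_{12}\lambda_{k,k+1}}{\lambda_{1k}} \;+\; \frac{\lambda_{1,k+1}}{\lambda_{1k}}\,\lambda_{2k} \;+\; \frac{\sqrt{\lambda_{12}\lambda_{k,k+1}\lambda_{2k}\lambda_{1,k+1}}}{\lambda_{1k}}\,\boxed{12k}\,\boxed{1,k,k{+}1}.
\]
The first summand is the weight of the ordinary $T$-path at $j=k$ from Lemma~\ref{lemma:Schiffler}. For the second, inserting (b) at $k$ and splitting $\mathcal{T}_{2,k}$ via Lemma~\ref{lm:super-path-single-fan} yields, after multiplying by $\lambda_{1,k+1}/\lambda_{1k}$, the ordinary $T$-path weights at $j=2,\dots,k-1$ together with the partial super sum $\lambda_{12}\lambda_{1,k+1}\sum_{1\le i<j\le k-2}\wt(\sigma_i)\wt(\sigma_j)$. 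The third summand is simplified by using (a) at $k$ in the form $\sqrt{\lambda_{2k}}\,\boxed{12k}=\sqrt{\lambda_{12}\lambda_{1k}}\sum_{i=1}^{k-2}\wt(\sigma_i)$ and then collapsing the remaining scalar factors against the definition of $\wt(\sigma_{k-1})$; this identifies it as $\lambda_{12}\lambda_{1,k+1}\sum_{i=1}^{k-2}\wt(\sigma_i)\wt(\sigma_{k-1})$. Combining with the partial super sum recovers $\lambda_{12}\lambda_{1,k+1}\sum_{1\le i<j\le k-1}\wt(\sigma_i)\wt(\sigma_j)$, which by Lemma~\ref{lm:super-path-single-fan} is exactly $\sum_{t\in\mathcal{T}^1_{2,k+1}}\wt(t)$; the total matches $\sum_{t\in\mathcal{T}_{2,k+1}}\wt(t)$.

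The delicate point will be sign bookkeeping. The super Ptolemy formulas carry signs fixed by the orientation conventions of Figure~\ref{fig:super_ptolemy}, so at every flip in the mutation sequence one must verify that the default orientation continues to place $\boxed{12k}$ and $\boxed{1,k,k{+}1}$ in the roles of $\theta$ and $\sigma$ with the correct signs (rather than with a swap or a negation), appealing to the spin-structure equivalence of Proposition~\ref{prop:equiv_rel}. Relatedly, one must check that the ordered products $\wt(\sigma_i)\wt(\sigma_{k-1})$ produced in the derivation already respect the positive ordering of Definition~\ref{def:pos_order} ensured by the counterclockwise convention around the fan center, so that no unintended sign appears when matching them to super $T$-path weights.
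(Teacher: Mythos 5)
Your proposal follows the same strategy as the paper's own proof: a simultaneous induction on $k$ for parts~(a) and~(b), applying the super Ptolemy relations \eqref{eq:5} and \eqref{eq:mutation} to the quadrilateral created by the next flip, dividing by the appropriate $\lambda$-lengths, and splitting the resulting sum into ordinary $T$-paths, super steps avoiding the newest triangle, and super steps using it (via Lemma~\ref{lemma:Schiffler} and Lemma~\ref{lm:super-path-single-fan}). The only structural difference is a harmless index shift: you flip $(1,k)$ to pass from $k$ to $k+1$, while the paper flips $(1,k-1)$ to pass from $k-1$ to $k$.

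One inconsistency is worth flagging. You state that, in the quadrilateral $\{1,2,k,k{+}1\}$, the default orientation assigns $\theta=\boxed{12k}$ and $\sigma=\boxed{1,k,k{+}1}$. With the convention of Figure~\ref{fig:super_ptolemy} ($\theta$ to the left, $\sigma$ to the right of the oriented flipped diagonal, cf.\ the paper's explicit remark in the zig-zag proof) and the diagonal $(1,k)$ oriented from $1$ to $k$ in a counterclockwise-labelled polygon, the triangle containing $k+1$ lies on the \emph{left} and the one containing $2$ on the \emph{right}. So the correct identification is $\theta=\boxed{1,k,k{+}1}$ and $\sigma=\boxed{12k}$, the opposite of what you wrote. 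This does not wreck your computation, because the formula you actually wrote for \eqref{eq:mutation} carries the product in the order $\boxed{12k}\,\boxed{1,k,k{+}1}$, which is $\sigma\theta$ under the \emph{corrected} identification and therefore agrees with the paper's $\lambda_{1,k-1}\lambda_{2k} = \lambda_{12}\lambda_{k-1,k} + \lambda_{1k}\lambda_{2,k-1} + \sqrt{\cdots}\,\boxed{1,2,k-1}\,\boxed{1,k-1,k}$ (after the index shift). But with the labels as you stated them, \eqref{eq:mutation} would instead produce $\boxed{1,k,k{+}1}\,\boxed{12k}$, which is the negative; so be careful that the labels you state and the formulas you write are consistent.

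On the ``delicate point'' you raise at the end: it resolves cleanly and should be spelled out rather than left as a concern. In the single-fan flip sequence, in each quadrilateral the edge whose orientation changes under the flip (edge $b$ in Figure~\ref{fig:flip_spin}) is the boundary edge joining the two consecutive fan vertices, whose orientation is ignored; the interior diagonals $(1,k+1),(1,k+2),\dots$ retain their default (away-from-center) orientation throughout the sequence. Likewise the positive-ordering check is short: the product $\wt(\sigma_i)\wt(\sigma_{k-1})$ has $i<k-1$, and the triangles $\theta_i,\theta_{k-1}$ appear in counterclockwise order around the fan center, matching Definition~\ref{def:pos_order}, which is precisely what the paper records.
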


\begin{proof}
 We will induct on $k$. We begin with the case of $k=3$ where statements (a) and (b) follow immediately since
 ${\rm wt}(\sigma_1) = \sqrt{\frac{\lambda_{23}}{\lambda_{12}\lambda_{13}}} \, \boxed{123}$\,, and the unique super $T$-path from vertex $2$ to vertex $3$
 is simply the ordinary $T$-path $t = (2,3)$.

 In general, for $k\geq 4$, after flipping arcs $(1,3),\dots,(1,k-2)$, the next flip of arc $(1,k-1)$ will be inside the following quadrilateral:
 \begin{center}
 \begin{tikzpicture}
 \draw (1,0) -- (0,1) -- (-1,0) -- (0,-1) -- cycle;
 \draw (-1,0) --node {\midarrow} (1,0);

 \draw (-1,0) node[left] {$1$};
 \draw (1,0) node[right] {$k-1$};
 \draw (0,-1) node[below] {$2$};
 \draw (0,1) node[above] {$k$};
 \end{tikzpicture}
 \end{center}
 Note that the $\mu$-invariant $\boxed{1,k-1,k}$ is in the initial triangulation, but $\boxed{1,2,k-1}$ is not\footnote{Except for the special case of $k=4$.}.
 However, we assume by induction that $\boxed{1,2,k-1}$ is given by part $(a)$.

 First we prove part (b).
 The super Ptolemy relation (equation~\eqref{eq:mutation}) says that $\lambda_{2k}$ is given by
 \[
 \lambda_{1,k-1}\lambda_{2k} = \lambda_{12}\lambda_{k-1,k} + \lambda_{1k}\lambda_{2,k-1}
 + \sqrt{\lambda_{12}\lambda_{2,k-1}\lambda_{k-1,k}\lambda_{1k}} \, \boxed{1,2,k-1} \, \boxed{1,k-1,k}\,.
 \]
 After dividing by $\lambda_{1,k-1}$, we get a formula for $\lambda_{2,k}$. On the right-hand side, the first term gives
 $\lambda_{21}\lambda_{1,k-1}^{-1}\lambda_{k-1,k}$, which is clearly an ordinary $T$-path. The second term, by induction, is
 \[ \sum_{t \in \mathcal{T}_{2,k-1}} {\rm wt}(t) \lambda_{k-1,1}^{-1} \lambda_{1k}. \]
 Taking the ordinary $T$-paths in $\mathcal{T}^0_{2,k-1}$, and appending the arcs $(k-1,1)$ and $(1,k)$ give the rest of the ordinary $T$-paths
 in $\mathcal{T}^0_{2k}$. (See Lemma~\ref{lemma:Schiffler} and note that appending arc $(k-1,1)$ as an even step may in fact yield a backtrack that cancels out the final step of an ordinary $T$-path in~$\mathcal{T}^0_{2k}$.) The terms coming from $\mathcal{T}^1_{2,k-1}$, multiplied by~$\frac{\lambda_{1k}}{\lambda_{1,k-1}}$, similarly give the super $T$-paths
 in~$\mathcal{T}^1_{2k}$ which do not involve the triangle~$(1,k-1,k)$ (see Lemma~\ref{lm:super-path-single-fan}).

 The last term, using part (a) and induction on $\boxed{1,2,k-1}$\,, is
 \begin{align*}
 \frac{\sqrt{\lambda_{12}\lambda_{2,k-1}\lambda_{k-1,k}\lambda_{1k}}}{\lambda_{1,k-1}} \, \boxed{1,2,k-1} \, \boxed{1,k-1,k}
 &= \lambda_{21} \sum_{i=1}^{k-3} {\rm wt}(\sigma_i) {\rm wt}(\sigma_{k-2}) \lambda_{1k}.
 \end{align*}
 These are the weights of the super $T$-paths which use the last triangle, namely $(1,k-1,k)$ (again, see Lemma~\ref{lm:super-path-single-fan}).

 Note that the product ${\rm wt}(\sigma_i) {\rm wt}(\sigma_{k-2})$ is in the positive ordering,
 since the two $\mu$-invariants appear in the counter-clockwise order around the fan center.

 Now, we examine part (a). Looking at the same quadrilateral as above, the super Ptolemy relation for $\mu$-invariants (equation~\eqref{eq:5}) says that
 \[ \boxed{12k} \, \sqrt{\lambda_{1,k-1}\lambda_{2k}} = \boxed{1,k-1,k} \, \sqrt{\lambda_{12}\lambda_{k-1,k}} + \boxed{1,2,k-1} \, \sqrt{\lambda_{1k}\lambda_{2,k-1}}. \]

 Dividing by $\sqrt{\l12\lambda_{1,k-1}\l1k}$, commuting the $\lambda$-lengths past the $\mu$-invariants, we get
 \[\sqrt{\l2k\over \l12\l1k }\, \boxed{12k}=\sqrt{\l k{,k-1} \over \l1{,k-1}\l1k }\,\boxed{1,k-1,k} + \sqrt{\l2{,k-1} \over\l12\l1{,k-1}}\,\boxed{1,2,k-1}\,.\]
 The first term on the right-hand side is simply ${\rm wt}(\sigma_{k-2})$. By induction, the second term is $\sum_{i=1}^{k-3} {\rm wt}(\sigma_i)$. Therefore we have
 \[\sqrt{\l2k\over \l12\l1k }\, \boxed{12k}=\wt(\sigma_{k-2}) + \left(\sum_{i=1}^{k-3}\wt(\sigma_i)\right)=\sum_{i=1}^{k-2}\wt(\sigma_{i})\]
 as desired.
\end{proof}

\subsection{Proof of Theorem~\ref{thm:main} for a zig-zag triangulation}

Next, we prove our main theorem for the case of a zig-zag triangulation.

\begin{Theorem} \label{thm:proof_zigzag}
 Consider a zigzag triangulation $T$ of an $n$-gon as depicted in Figure~{\rm \ref{fig:proof_zigzag}}. We consider all the vertices
 except for $1$ and $n$ to be fan centers, so that $c_{i}$ is labelled $i+1$ for $1\leq i\leq n-2$.

 After flipping the arcs $(n-1,n-2),(n-2,n-3),\dots,(k+2,k+1)$, we have
 \begin{enumerate}\itemsep=0pt
 \item[$(a)$] $\displaystyle \sqrt{\lambda_{kn}\lambda_{k+1,n}\over\lambda_{k,k+1}} \, \boxed{k,k+1,n} = \sum_{i=k}^{n-2} \sum_{t\in\mathcal T_{n,i+1}} \wt(t) \wt(\sigma_i)$,
 \item[$(b)$] $\displaystyle \lambda_{k,n} = \sum_{t \in \mathcal{T}_{kn}} {\rm wt}(t)$.
 \end{enumerate}
\end{Theorem}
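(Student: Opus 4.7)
My plan is to prove parts (a) and (b) simultaneously by downward induction on $k$, from the base case near $k = n-2$ down to $k = 1$, closely mirroring the strategy used in the single-fan case (Theorem~\ref{thm:mu_single_fan}). The base case is the smallest $k$ for which the arc $(k,n)$ can be reached by zero or one flip from the initial zigzag triangulation. Here part (b) is either immediate (the single ordinary $T$-path given by the arc itself) or a direct expansion of the super Ptolemy relation \eqref{eq:mutation} for one flip; part (a) reduces to recognizing the definition of $\wt(\sigma_i)$ from Definition~\ref{def:weight} in light of \eqref{eq:5} applied to that single flip.

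For the inductive step, assume (a) and (b) hold at level $k+1$ (and hence at all levels $>k$). The flip of the arc $(k+2,k+1)$ occurs in the quadrilateral with vertex set $\{k, k+1, k+2, n\}$, whose old diagonal is $(k+1,k+2)$ and whose new diagonal is $(k,n)$; its four sides are $(k,k+1)$, $(k+1,n)$, $(k+2,n)$, $(k,k+2)$, with the middle two obtained from earlier flips, and the two triangles flanking the old diagonal carry $\mu$-invariants $\boxed{k,k+1,k+2}$ and $\boxed{k+1,k+2,n}$. Applying \eqref{eq:mutation} and dividing by $\lambda_{k+1,k+2}$ yields three terms, each of which I will match to a family of super $T$-paths in $\mathcal{T}_{k,n}$:
\begin{gather*}
\text{(i)}\ \tfrac{\lambda_{k,k+1}\lambda_{k+2,n}}{\lambda_{k+1,k+2}},\qquad
\text{(ii)}\ \tfrac{\lambda_{k,k+2}\lambda_{k+1,n}}{\lambda_{k+1,k+2}},\qquad
\text{(iii)}\ \tfrac{\sqrt{\lambda_{k,k+1}\lambda_{k+1,n}\lambda_{k+2,n}\lambda_{k,k+2}}}{\lambda_{k+1,k+2}}\,\boxed{k,k+1,k+2}\,\boxed{k+1,k+2,n}.
\end{gather*}
Term (i), combined with the inductive formula for $\lambda_{k+2,n}$ from part (b) at level $k+2$, produces super $T$-paths prefixed by $(k,k+1)$ (odd) and $(k+1,k+2)$ (even/inverted) followed by any super $T$-path from $k+2$ to $n$. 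Term (ii), using the inductive formula for $\lambda_{k+1,n}$ at level $k+1$, similarly produces those prefixed by $(k,k+2)$ followed by $(k+2,k+1)$ and any super $T$-path from $k+1$ to $n$. Term (iii) is the key one: using the inductive part (a) at level $k+1$ to expand $\boxed{k+1,k+2,n}$, and recognizing the remaining prefactor $\sqrt{\lambda_{k,k+1}\lambda_{k,k+2}/\lambda_{k+1,k+2}}\cdot\boxed{k,k+1,k+2}$ as $\wt(\sigma_k)$ for the triangle $(k,k+1,k+2)$ in the auxiliary graph $\Gamma_T^{k,n}$, produces exactly the super $T$-paths in $\mathcal{T}_{k,n}$ which contain a super-step ending at the $\theta$-vertex of this triangle. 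Disjointness and completeness of these three families then give (b).

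For part (a) at level $k$, I apply the $\mu$-invariant mutation \eqref{eq:5} to the same quadrilateral:
\[
\boxed{k,k+1,n}\,\sqrt{\lambda_{k+1,k+2}\,\lambda_{k,n}} \;=\; \boxed{k,k+1,k+2}\,\sqrt{\lambda_{k,k+1}\,\lambda_{k+2,n}} \;+\; \boxed{k+1,k+2,n}\,\sqrt{\lambda_{k,k+2}\,\lambda_{k+1,n}}.
\]
After dividing through by $\sqrt{\lambda_{k+1,k+2}\,\lambda_{k,k+1}}$ and multiplying by $\sqrt{\lambda_{k,k+2}/\lambda_{k+1,k+2}}$ (commuting scalar factors past the odd $\mu$-invariants), the first term on the right becomes $\lambda_{k+2,n}\,\wt(\sigma_k)$ (matching the $i=k$ summand on the right-hand side of (a) via $\mathcal{T}_{n,k+1}$), while the second term, by the inductive formula for $\boxed{k+1,k+2,n}$, unfolds into $\sum_{i=k+1}^{n-2}\sum_{t\in\mathcal{T}_{n,i+1}}\wt(t)\wt(\sigma_i)$; assembling these recovers (a) at level $k$.

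The main obstacle is the bookkeeping in step (iii) and in the argument for (a): one must verify that the inductive expansion of $\boxed{k+1,k+2,n}$ gives precisely the super-steps appearing in super $T$-paths in $\mathcal{T}_{k,n}$, that the positive ordering of $\mu$-invariants (Definition~\ref{def:pos_order}) is compatible between levels $k+1$ and $k$ under the flip (so all coefficients remain positive under the default orientation), and that the default orientation is preserved by the sequence of flips so that Proposition~\ref{prop:equiv_rel} and Remark~\ref{rem:pos_order} continue to apply. Disjointness of families (i)--(iii) and their collective coverage of $\mathcal{T}_{k,n}$ is a purely combinatorial check based on classifying super $T$-paths by their initial segment in the triangle $(k,k+1,k+2)$.
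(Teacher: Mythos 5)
Your overall strategy — downward induction on $k$, flipping $(k+2,k+1)$ in the quadrilateral $\{k,k+1,k+2,n\}$, expanding by the super Ptolemy relations \eqref{eq:mutation} and \eqref{eq:5}, and matching the resulting terms to families of super $T$-paths — is exactly the paper's approach, and the decomposition into terms (i), (ii), (iii) mirrors the paper's four-part decomposition. However, there are several concrete slips and one genuinely missing idea.

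For part~(a), your application of \eqref{eq:5} pairs the $\mu$-invariants with the wrong $\lambda$-length products. With the arrow $k+1\to k+2$ and $a=\lambda_{k,k+1}$, $b=\lambda_{k,k+2}$, $c=\lambda_{k+2,n}$, $d=\lambda_{k+1,n}$, the relation is
\[
\boxed{k,k+1,n}\,\sqrt{\lambda_{k+1,k+2}\lambda_{k,n}} = \boxed{k,k+1,k+2}\,\sqrt{\lambda_{k,k+2}\,\lambda_{k+1,n}} + \boxed{k+1,k+2,n}\,\sqrt{\lambda_{k,k+1}\,\lambda_{k+2,n}},
\]
whereas you have the two products swapped. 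As written, your normalizing factor $\sqrt{\lambda_{k,k+2}/\lambda_{k+1,k+2}}\cdot\bigl(\sqrt{\lambda_{k+1,k+2}\lambda_{k,k+1}}\bigr)^{-1}$ does not convert the left side into $\sqrt{\lambda_{kn}\lambda_{k+1,n}/\lambda_{k,k+1}}\,\boxed{k,k+1,n}$; the correct factor is $\sqrt{\lambda_{k+1,n}/(\lambda_{k,k+1}\lambda_{k+1,k+2})}$, and the $i=k$ summand in (a) then comes out as $\lambda_{k+1,n}\wt(\sigma_k)$, not $\lambda_{k+2,n}\wt(\sigma_k)$ (your own identification via $\mathcal{T}_{n,k+1}$ forces $\lambda_{k+1,n}$, so your statement is internally inconsistent). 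Similarly in term (iii) of part (b), the prefactor $\sqrt{\lambda_{k,k+1}\lambda_{k,k+2}/\lambda_{k+1,k+2}}\,\boxed{k,k+1,k+2}$ is $\lambda_{k,k+1}\wt(\sigma_k)$, not $\wt(\sigma_k)$; the extra $\lambda_{k,k+1}$ is precisely the weight of the initial odd step $(k,k+1)$ of a super $T$-path that then enters a super step at $\theta_k$.

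The more substantive gap is in your treatment of terms (i) and (ii). You describe them as ``super $T$-paths prefixed by $(k,k+1),(k+1,k+2)$ followed by any super $T$-path from $k+2$ to $n$'' (and analogously for (ii)). This is not a bijection: if $t'\in\mathcal{T}_{k+2,n}$ itself begins with the edge $(k+1,k+2)$, the naive concatenation would use that arc twice, violating (T3); and conversely, some super $T$-paths in $\mathcal{T}_{kn}$ without a super step at $\theta_k$ do \emph{not} have $(k+1,k+2)$ as their second edge. These two phenomena are exactly the terms where the denominator $\lambda_{k+1,k+2}$ cancels against a matching factor in $\lambda_{k+1,n}$ or $\lambda_{k+2,n}$. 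The paper's proof addresses this explicitly (see the two cases in Figure~17: ``removing the first two edges'' versus ``replacing the first edge with $(k+1,k+2)$''). Your proposal relegates this to ``a purely combinatorial check,'' but the check is where the argument actually lives, and the naive prefix picture is false as stated.

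Finally, you flag but do not resolve the default-orientation/positive-ordering issue. The paper splits into two cases depending on the parity of $n-k$ (equivalently on which of the two quadrilateral orientations arises), which determines whether $\theta_k$ is appended at the beginning or the end of the positive ordering, and correspondingly whether parts (iii) and (iv) of the Ptolemy expansion appear as $\wt(\sigma_k)\wt(t)$ or $\wt(t)\wt(\sigma_k)$. This parity split is not optional decoration: it is what makes all coefficients come out $+1$ under the default orientation, and without it the statement of the theorem (no signs) would not follow.
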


\begin{figure}[h]\centering
\begin{tikzpicture}[decoration={
    markings,
    mark=at position 0.5 with {\arrow{>}}}
    ]

\tikzstyle{every path}=[draw]
		\path
    node[
      regular polygon,
      regular polygon sides=10,
      draw=none,
      inner sep=1.9cm,
    ] (T) {}
    %
    (T.corner 1) node[above] {$1$}
    (T.corner 2) node[above] {$2$}
    (T.corner 3) node[left] {$4$}
    (T.corner 4) node[left] {$6$}
    (T.corner 5) node[left] {$n-2$}
    (T.corner 6) node[below] {$n$}
    (T.corner 7) node[below] {$n-1$}
    (T.corner 8) node[right] {$7$}
    (T.corner 9) node[right] {$5$}
    (T.corner 10) node[right] {$3$}
;

\draw[-] (T.corner 4) to (T.corner 3) to (T.corner 2) to (T.corner 1) to (T.corner 10) to (T.corner 9) to (T.corner 8) ;
\draw[dashed] (T.corner 8) to (T.corner 7);
\draw[-] (T.corner 7) to (T.corner 6) to (T.corner 5);
\draw[dashed] (T.corner 5) to (T.corner 4);

\draw [postaction={decorate}] (T.corner 2) to node[xshift=0.5em,yshift=0.4em] {{\small $\theta_1$}} (T.corner 10) ;
\draw [postaction={decorate}] (T.corner 10) to node[xshift=-0.3em,yshift=0.9em] {{\small $\theta_2$}} (T.corner 3);
\draw [postaction={decorate}] (T.corner 3) to node[xshift=1em,yshift=0.8em] {{\small $\theta_3$}} (T.corner 9);
\draw [postaction={decorate}] (T.corner 9) to node[xshift=-1.2em,yshift=1em] {{\small $\theta_4$}} (T.corner 4);
\draw [postaction={decorate}] (T.corner 4) to node[xshift=1.5em,yshift=0.6em] {{\small $\theta_5$}} (T.corner 8);
\draw[dashed] (T.corner 8) to (T.corner 5);
\draw[postaction={decorate}] (T.corner 5) to node[xshift=0.2em,yshift=-0.7em] {{\small $\theta_{n-2}$}} (T.corner 7);

\foreach \x in {1,2,...,10}{
\draw (T.corner \x) node [fill,circle,scale=0.2] {};}
\end{tikzpicture}
\;\;\;
\begin{tikzpicture}[decoration={
    markings,
    mark=at position 0.5 with {\arrow{>}}}
    ]

\tikzstyle{every path}=[draw]
		\path
    node[
      regular polygon,
      regular polygon sides=10,
      draw=none,
      inner sep=1.9cm,
    ] (T) {}
    %
    (T.corner 1) node[above] {$1$}
    (T.corner 2) node[above] {$2$}
    (T.corner 3) node[left] {$4$}
    (T.corner 4) node[left] {$6$}
    (T.corner 5) node[left] {$n-2$}
    (T.corner 6) node[below] {$n$}
    (T.corner 7) node[below] {$n-1$}
    (T.corner 8) node[right] {$7$}
    (T.corner 9) node[right] {$5$}
    (T.corner 10) node[right] {$3$}
;

\draw[-] (T.corner 4) to (T.corner 3) to (T.corner 2) to (T.corner 1) to (T.corner 10) to (T.corner 9) to (T.corner 8) ;
\draw[dashed] (T.corner 8) to (T.corner 7);
\draw[-] (T.corner 7) to (T.corner 6) to (T.corner 5);
\draw[dashed] (T.corner 5) to (T.corner 4);

\draw [] (T.corner 2) to  (T.corner 10) ;
\draw [] (T.corner 10) to (T.corner 3);
\draw [] (T.corner 3) to  (T.corner 9);
\draw [] (T.corner 9) to  (T.corner 4);
\draw  (T.corner 4) to (T.corner 8);
\draw[dashed] (T.corner 8) to (T.corner 5);
\draw[] (T.corner 5) to (T.corner 7);

\coordinate (m1) at ($0.3*(T.corner 2)+0.4*(T.corner 1)+0.34*(T.corner 10)$);
\coordinate (m2) at ($0.33*(T.corner 2)+0.33*(T.corner 10)+0.33*(T.corner 3)$);
\coordinate (m3) at ($0.33*(T.corner 10)+0.33*(T.corner 9)+0.33*(T.corner 3)$);
\coordinate (m4) at ($0.33*(T.corner 9)+0.33*(T.corner 4)+0.33*(T.corner 3)$);
\coordinate (m5) at ($0.33*(T.corner 4)+0.33*(T.corner 8)+0.33*(T.corner 9)$);
\coordinate (m6) at ($0.33*(T.corner 4)+0.33*(T.corner 8)+0.33*(T.corner 5)$);
\coordinate (m7) at ($0.33*(T.corner 7)+0.33*(T.corner 8)+0.33*(T.corner 5)$);
\coordinate (m8) at ($0.3*(T.corner 7)+0.34*(T.corner 6)+0.4*(T.corner 5)$);

\foreach \t in {1,...,8}{
 \draw (m\t) node [fill,circle,scale=0.3] {};}

\draw (T.corner 2) -- (m1) node [near end, fill=white,inner sep=0.2] {$\sigma_{1}$};
\draw (T.corner 10)--(m2) node [near end, fill=white,inner sep=0.2] {$\sigma_{2}$};
\draw (T.corner 3)--(m3) node [near end, fill=white,inner sep=0.2] {$\sigma_{3}$};
\draw (T.corner 9)--(m4) node [near end, fill=white,inner sep=0.2] {$\sigma_{4}$};
\draw (T.corner 4)--(m5) node [near end, fill=white,inner sep=0.2] {$\sigma_{5}$};
\draw [dashed] (T.corner 8)--(m6);
\draw [dashed] (T.corner 5)--(m7);
\draw (T.corner 7) -- (m8) node [yshift=-0.14cm,xshift=0.5cm, fill=white,inner sep=0] {$\sigma_{n-2}$} ;

\foreach \x in {1,2,...,10}{
\draw (T.corner \x) node [fill,circle,scale=0.2] {};}
\end{tikzpicture}
\caption{Proof of Theorem~\ref{thm:proof_zigzag}.
{Left:} zig-zag triangulation with the default orientation.
{Right:} the corresponding auxiliary graph.}\label{fig:proof_zigzag}
\end{figure}
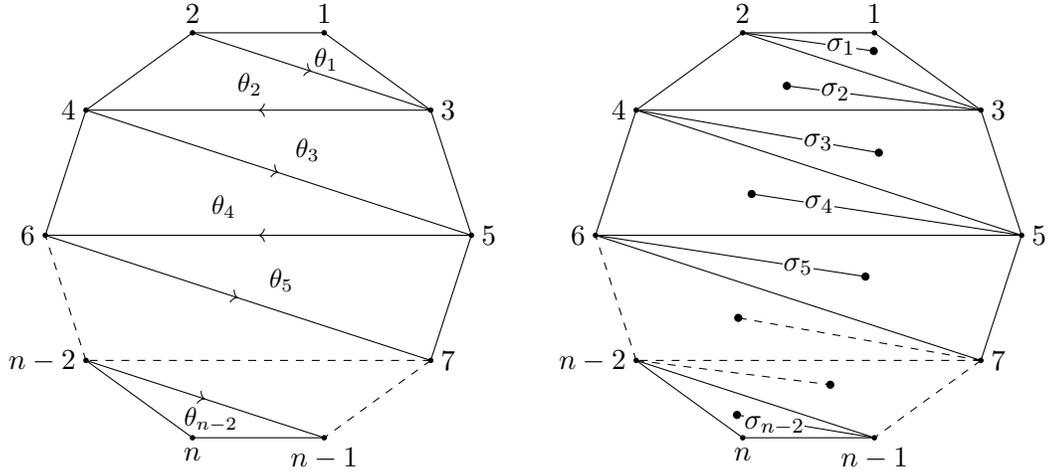

\begin{proof}
 We assume that vertices $n$, $n-1$, and $n-2$ are oriented as in Figure~\ref{fig:proof_zigzag}. The case that they are oriented oppositely is similar.

 We will induct (backwards) on $k$.
 The base case of the induction is the case of a single triangle, when $k=n-2$.
 Since the edge $(n-2,n)$ is already in the triangulation, $\lambda_{n-2,n}$ is already one of the generators. Clearly the only $T$-path
 in this case is the single edge $(n-2,n)$ (when zero flips have been performed). This establishes part (b) for the base case.
 For part (a), the left-hand side is
 \[ \sqrt{\frac{\lambda_{n-2,n}\lambda_{n-1,n}}{\lambda_{n-1,n-2}}} \, \boxed{n-2,n-1,n}\,. \]
 For the right-hand side, there is only a single term in this sum. This is because $i$ only takes the value $n-2$,
 and the only $T$-path from $n-1$ to $n$ is the single edge $(n-1,n)$. Thus the right-hand side is
 \begin{align*}
 \lambda_{n-1,n}   \wt(\sigma_{n-2}) &= \lambda_{n-1,n}   \sqrt{\frac{\lambda_{n-2,n}}{\lambda_{n-2,n-1}\lambda_{n-1,n}}} \, \boxed{n-2,n-1,n} \\
 &= \sqrt{\frac{\lambda_{n-2,n}\lambda_{n-1,n}}{\lambda_{n-2,n-1}}} \, \boxed{n-2,n-1,n}\,.
 \end{align*}
 This establishes part (a) for the base case. We now assume that $1 \leq k \leq n-3$.

 After flipping the arcs $(n-1,n-2)$, $(n-2,n-3), \dots, (k+3,k+2)$,\footnote{For the case of $k=n-3$, no arcs have yet been flipped.} we will have one of the two following quadrilaterals, depending on the parity of $n-k$:
 \begin {center}
 \begin {tikzpicture}
 \draw (1,0) -- (0,1) -- (-1,0) -- (0,-1) -- cycle;
 \draw (-1,0) --node {\midarrow} (1,0);
 \draw (-1,0) node[left] {$k+1$};
 \draw (1,0) node[right] {$k+2$};
 \draw (0,-1) node[below] {$n$};
 \draw (0,1) node[above] {$k$};
 \draw (3,0) node {or};
 \begin {scope}[shift={(6,0)}]
 \draw (1,0) -- (0,1) -- (-1,0) -- (0,-1) -- cycle;
 \draw (1,0) --node {\reflectbox{\midarrow}} (-1,0);
 \draw (-1,0) node[left] {$k+2$};
 \draw (1,0) node[right] {$k+1$};
 \draw (0,-1) node[below] {$n$};
 \draw (0,1) node[above] {$k$};
 \end {scope}
 \end {tikzpicture}
 \end {center}

 Now we flip the edge $(k+2, k+1)$ while applying the Ptolemy relation equation~\eqref{eq:5}. In both cases pictured above, the triangle $(k,k+1,n)$ will play
 the role of $\theta'$ (it will be on the left, looking in the direction of the arrow after the flip).
 And because of the opposite orientations, application of equation~\eqref{eq:5} in both pictures gives
 \begin{gather*}
 \sqrt{\lambda_{kn} \lambda_{k+1,k+2}} \, \boxed{k,k+1,n} \\
 \qquad{} = \sqrt{\lambda_{k+1,n} \lambda_{k,k+2}} \, \boxed{k,k+1,k+2} + \sqrt{\lambda_{k,k+1} \lambda_{k+2,n}} \, \boxed{k+1,k+2,n}\,.
 \end{gather*}

 Multiplying both sides by $\sqrt{\frac{\lambda_{k+1,n}}{\lambda_{k,k+1}\lambda_{k+1,k+2}}}$, we get
 \begin{gather*}
 \begin{split}
 & \sqrt{\frac{\lambda_{kn} \lambda_{k+1,n}}{\lambda_{k,k+1}}} \, \boxed{k,k+1,n}\\
& \qquad{}
 = \lambda_{k+1,n} \sqrt{\frac{\lambda_{k,k+2}}{\lambda_{k,k+1}\lambda_{k+1,k+2}}} \, \boxed{k,k+1,k+2}
 + \sqrt{\frac{\lambda_{k+1,n} \lambda_{k+2,n}}{\lambda_{k+1,k+2}}} \, \boxed{k+1,k+2,n}\,.
 \end{split}
 \end{gather*}

 First we examine the first term on the right-hand side. By induction, $\lambda_{k+1,n}$ is the weighted sum of super $T$-paths from $k+1$ to $n$.
 Notice that $\sqrt{\frac{\lambda_{k,k+2}}{\lambda_{k,k+1} \lambda_{k+1,k+2}}} \, \boxed{k,k+1,k+2}$ is the weight of the $\sigma$-step $\sigma_k$, going from
 vertex $k+1$ into the triangle labelled $\theta_k$.
 Therefore this first term is equal to
 \begin{equation} \label{eq:zigzag_1st_term}
 \sum_{t \in \mathcal{T}_{k+1,n}} \wt(t) \wt(\sigma_k)\,.
 \end{equation}

Next, we focus on the second term of the right-hand side: $\sqrt{\frac{\lambda_{k+1,n} \lambda_{k+2,n}}{\lambda_{k+1,k+2}}} \, \boxed{k+1,k+2,n}\,$.
 By induction, this is equal to
 \begin{equation*} 
 \sqrt{\frac{\lambda_{k+1,n} \lambda_{k+2,n}}{\lambda_{k+1,k+2}}} \, \boxed{k+1,k+2,n} = \sum_{i=k+1}^{n-2} \sum_{t \in \mathcal T_{n,i+1}} \wt(t) \wt(\sigma_i).
 \end{equation*}
 Adding the terms from equation~\eqref{eq:zigzag_1st_term} gives the result for part~(a).

 We should verify that the $\mu$-invariants in each product $\wt(t)\wt(\sigma_k)$ are in the correct positive ordering. {We use the same viewpoint as in Remark~\ref{rem:pos_order}.} By induction, the $\mu$-invariants
 in each term of $\wt(t)$ occur in the positive ordering of the smaller polygon which contains triangles $\theta_{k+1},\dots,\theta_{n-2}$.
 By construction, the positive ordering of the slightly larger polygon (which additionally contains $\theta_k$) is obtained from the smaller by
 placing $\theta_k$ either at the beginning or end of the previous order. But since $\wt(t)$ is an even element, it is central, and so
 $\wt(t)\wt(\sigma_k) = \wt(\sigma_k)\wt(t)$, and we may choose whichever one gives the correct positive ordering.

 Now we will prove part (b). Again after flipping the arcs $(n-1,n-2), (n-2,n-3), \dots$, $(k+3,k+2)$, we are in one of the
 two pictures from before. For the picture on the left, an application of equation~\eqref{eq:mutation}, followed by division on both sides by $\lambda_{k+1,k+2}$, gives
 \begin{gather*}
 \l kn =
 \underbrace{
 \frac{\lambda_{k,k+2} \lambda_{k+1,n}}{\lambda_{k+1,k+2}}}_{\text{part 1}}
 +
 \underbrace{\frac{\lambda_{k,k+1} \lambda_{k+2,n}}{\lambda_{k+1,k+2}}}_{\text{part 2}}\nonumber \\
 \hphantom{\l kn =}{}+
 \underbrace{\sqrt{\frac{\lambda_{k+1,n} \lambda_{k+2,n}}{\lambda_{k+1,k+2}}}\; \boxed{k+1,k+2,n}}_{\text{part 3}}
 \cdot
 \underbrace{\sqrt{\frac{\lambda_{k,k+1} \lambda_{k,k+2}}{\lambda_{k+1,k+2}}}\; \boxed{k,k+1,k+2}}_{\text{part 4}}.
 \end{gather*}
 Note that $\boxed{k+1,k+2,n}$ lies on the right of the oriented diagonal, and hence plays the role of~$\sigma$. Analogously,
 $\boxed{k,k+1,k+2}$ lies on the left and plays the role of $\theta$. Applying equation~\eqref{eq:mutation} to the picture on the right gives the same thing, except that parts~(3) and~(4) appear
 in the opposite order.

 First we will explain that the sum of parts~(1) and~(2) is the weighted sum of all super $T$-paths from $k$ to $n$
 that do not contain a $\tau$-edge which crosses $(k+1,k+2)$.

 Suppose a path $t \in \mathcal{T}_{kn}$ does not contain a $\tau$ edge crossing $(k+1,k+2)$ (i.e., does not have a super step starting with $\theta_k$).
 Then the first step of $t$ is either edge $(k,k+1)$ or $(k,k+2)$, and the remainder of $t$ lies in the smaller polygon below the diagonal $(k+1,k+2)$.
 There are two cases.

 The second edge might be $(k+1,k+2)$, in which case the remainder of $t$ (after removing the first two edges) is a super $T$-path
 in either $\mathcal{T}_{k+1,n}$ or $\mathcal{T}_{k+2,n}$. This is depicted in the left of Figure~\ref{fig:zigzag_cases}.
 Clearly all of these paths occur as terms in parts~(1) and~(2).

 However, there are more terms in parts (1) and~(2);
 namely those in which the denominator $\lambda_{k+1,k+2}$ cancels a contribution in the numerator. This is the
 second case, in which the second step in $t$ is \emph{not} the edge $(k+1,k+2)$. This is depicted in the right of Figure~\ref{fig:zigzag_cases}.
 In this case, replacing the first edge of~$t$ with $(k+1,k+2)$ gives
 a super $T$-path in either $\mathcal{T}_{k+1,n}$ or $\mathcal{T}_{k+2,n}$, and these are the remaining terms in parts~(1) and~(2) in which the denominator cancels.

 \begin{figure}[h]\centering
 \begin{tikzpicture}[decoration={markings,mark=at position 0.5 with {\arrow{>}}}, scale=0.8, every node/.style={scale=0.8}]

\tikzstyle{every path}=[draw] 
		\path
    node[
      regular polygon,
      regular polygon sides=10,
      draw=none,
      inner sep=2cm,
    ] (T) {}
    %
    (T.corner 1) node[above] {$1$}
    (T.corner 3) node[left]  {$k$}
    (T.corner 4) node[left]  {$k+2$}
    (T.corner 9) node[right] {$k+1$}
;

\draw[-] (T.corner 4) to (T.corner 3) to (T.corner 2) to (T.corner 1) to (T.corner 10) to (T.corner 9) to (T.corner 8) ;
\draw[-] (T.corner 8) to (T.corner 7);
\draw[-] (T.corner 7) to (T.corner 6) to (T.corner 5);
\draw[-] (T.corner 5) to (T.corner 4);

\draw [postaction={decorate}] (T.corner 2)  to  (T.corner 10);
\draw [postaction={decorate}] (T.corner 10) to  (T.corner 3);
\draw [postaction={decorate}] (T.corner 3)  to  (T.corner 9);
\draw [postaction={decorate}] (T.corner 9)  to  (T.corner 4);
\draw [postaction={decorate}] (T.corner 4)  to  (T.corner 8);
\draw [postaction={decorate}] (T.corner 8)  to  (T.corner 5);
\draw []                      (T.corner 8)  to  (T.corner 5);
\draw [postaction={decorate}] (T.corner 5)  to  (T.corner 7);

\foreach \x in {1,2,...,10}{
\draw (T.corner \x) node [fill,circle,scale=0.2] {};}

\coordinate (m1) at ($0.3*(T.corner 2)+0.4*(T.corner 1)+0.34*(T.corner 10)$);
\coordinate (m2) at ($0.33*(T.corner 2)+0.33*(T.corner 10)+0.33*(T.corner 3)$);
\coordinate (m3) at ($0.33*(T.corner 10)+0.33*(T.corner 9)+0.33*(T.corner 3)$);
\coordinate (m4) at ($0.33*(T.corner 9)+0.33*(T.corner 4)+0.33*(T.corner 3)$);
\coordinate (m5) at ($0.33*(T.corner 4)+0.33*(T.corner 8)+0.33*(T.corner 9)$);
\coordinate (m6) at ($0.33*(T.corner 4)+0.33*(T.corner 8)+0.33*(T.corner 5)$);
\coordinate (m7) at ($0.33*(T.corner 7)+0.33*(T.corner 8)+0.33*(T.corner 5)$);
\coordinate (m8) at ($0.3*(T.corner 7)+0.34*(T.corner 6)+0.4*(T.corner 5)$);

\draw (T.corner 2) -- (m1);
\draw (T.corner 10)--(m2);
\draw (T.corner 3)--(m3);
\draw (T.corner 9)--(m4);
\draw (T.corner 4)--(m5);
\draw (T.corner 8)--(m6);
\draw (T.corner 5)--(m7);
\draw (T.corner 7) -- (m8);

\foreach \t in {1,...,8}{
 \draw (m\t) node [fill,circle,scale=0.3] {};}

\draw [red, line width = 1.2]  (T.corner 3) -- (T.corner 9);
\draw [blue, line width = 1.2] (T.corner 9) -- (T.corner 4);
\draw [red, line width = 1.2]  (T.corner 4) -- (T.corner 8);
\draw [blue, line width = 1.2] (T.corner 8) -- (m6);
\draw [red, line width = 1.2]  (m6) -- (m7);
\draw [blue, line width = 1.2] (m7) -- (T.corner 5);
\draw [red, line width = 1.2]  (T.corner 5) -- (T.corner 6);

\end{tikzpicture}
\;\;\;
\begin{tikzpicture}[decoration={
    markings,
    mark=at position 0.5 with {\arrow{>}}}, scale=0.8, every node/.style={scale=0.8}
    ] 

\tikzstyle{every path}=[draw] 
		\path
    node[
      regular polygon,
      regular polygon sides=10,
      draw=none,
      inner sep=2cm,
    ] (T) {}
    %
    (T.corner 1) node[above] {$1$}
    (T.corner 3) node[left]  {$k$}
    (T.corner 4) node[left]  {$k+2$}
    (T.corner 9) node[right] {$k+1$}
;

\draw[-] (T.corner 4) to (T.corner 3) to (T.corner 2) to (T.corner 1) to (T.corner 10) to (T.corner 9) to (T.corner 8) ;
\draw[-] (T.corner 8) to (T.corner 7);
\draw[-] (T.corner 7) to (T.corner 6) to (T.corner 5);
\draw[-] (T.corner 5) to (T.corner 4);

\draw [postaction={decorate}] (T.corner 2)  to  (T.corner 10);
\draw [postaction={decorate}] (T.corner 10) to  (T.corner 3);
\draw [postaction={decorate}] (T.corner 3)  to  (T.corner 9);
\draw [postaction={decorate}] (T.corner 9)  to  (T.corner 4);
\draw [postaction={decorate}] (T.corner 4)  to  (T.corner 8);
\draw [postaction={decorate}] (T.corner 8)  to  (T.corner 5);
\draw []                      (T.corner 8)  to  (T.corner 5);
\draw [postaction={decorate}] (T.corner 5)  to  (T.corner 7);

\foreach \x in {1,2,...,10}{
\draw (T.corner \x) node [fill,circle,scale=0.2] {};}

\coordinate (m1) at ($0.3*(T.corner 2)+0.4*(T.corner 1)+0.34*(T.corner 10)$);
\coordinate (m2) at ($0.33*(T.corner 2)+0.33*(T.corner 10)+0.33*(T.corner 3)$);
\coordinate (m3) at ($0.33*(T.corner 10)+0.33*(T.corner 9)+0.33*(T.corner 3)$);
\coordinate (m4) at ($0.33*(T.corner 9)+0.33*(T.corner 4)+0.33*(T.corner 3)$);
\coordinate (m5) at ($0.33*(T.corner 4)+0.33*(T.corner 8)+0.33*(T.corner 9)$);
\coordinate (m6) at ($0.33*(T.corner 4)+0.33*(T.corner 8)+0.33*(T.corner 5)$);
\coordinate (m7) at ($0.33*(T.corner 7)+0.33*(T.corner 8)+0.33*(T.corner 5)$);
\coordinate (m8) at ($0.3*(T.corner 7)+0.34*(T.corner 6)+0.4*(T.corner 5)$);

\draw (T.corner 2) -- (m1);
\draw (T.corner 10)--(m2);
\draw (T.corner 3)--(m3);
\draw (T.corner 9)--(m4);
\draw (T.corner 4)--(m5);
\draw (T.corner 8)--(m6);
\draw (T.corner 5)--(m7);
\draw (T.corner 7) -- (m8);

\foreach \t in {1,...,8}{
 \draw (m\t) node [fill,circle,scale=0.3] {};}

\draw [red, line width = 1.2]  (T.corner 3) -- (T.corner 4);
\draw [blue, line width = 1.2] (T.corner 4) -- (m5);
\draw [red, line width = 1.2]  (m5) -- (m7);
\draw [blue, line width = 1.2] (m7) -- (T.corner 5);
\draw [red, line width = 1.2]  (T.corner 5) -- (T.corner 6);

\end{tikzpicture}
 \caption{{Left:} Removing the first two edges gives a path in $\mathcal{T}_{k+2,n}$.
 {Right:} Replacing the first edge $(k,k+2)$ with $(k+1,k+2)$ gives a path in $\mathcal{T}_{k+1,n}$.} \label{fig:zigzag_cases}
 \end{figure}
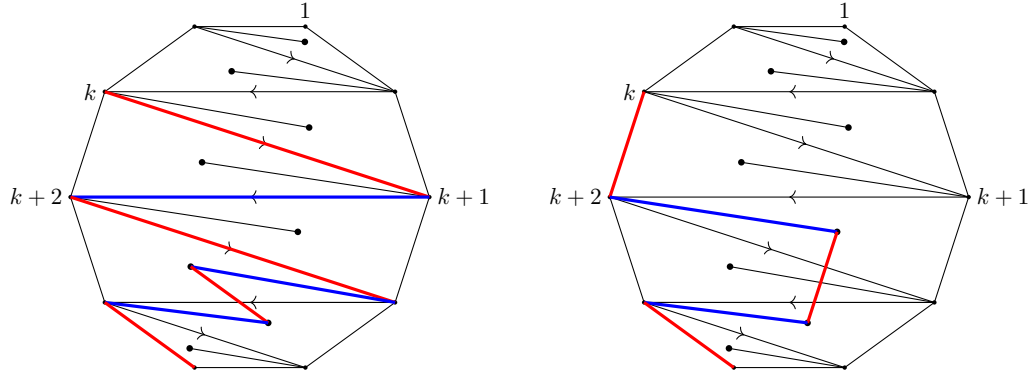

 Now we examine parts (3) and~(4). By induction, part (3) is given by the formula in part~(a):
 \[
 \sqrt{\frac{\lambda_{k+1,n} \lambda_{k+2,n}}{\lambda_{k+1,k+2}}}\, \boxed{k+1,k+2,n}
 = \sum_{i=k+1}^{n-2} \sum_{t \in \mathcal{T}_{i+1,n}} \wt(t) \wt(\sigma_i).
 \]
 Part (4) is equal to $\lambda_{k,k+1} \wt(\sigma_k)$, which is the weight of the first two steps of any super $T$-path $t \in \mathcal{T}_{kn}$ which \emph{does}
 contain a $\tau$-step crossing $(k+1,k+2)$. The terms of part~(3) are all possible ways to complete such a super $T$-path (after joining them by
 the appropriate $\tau$-step, which has weight~1).

\looseness=-1 Again, as in part~(a), we must check that these expressions are written in the correct positive ordering. As we noted above
 in the discussion of part~(a), the positive ordering of the smaller polygon (below the diagonal $(k+1,k+2)$) agrees with
 the positive ordering in the larger polygon. Therefore any factors appearing the formulas obtained above can be assumed to be in
 the correct positive ordering. In parts~(1) and~(2), two of the three factors are single edges in the triangulation, and the terms of the third factor
 (either $\lambda_{k+1,n}$ or $\lambda_{k+2,n}$) appear in positive order by induction. As was discussed in part $(a)$, the terms of
 part~(3) can be written as either $\wt(t)\wt(\sigma_i)$ or $\wt(\sigma_i)\wt(t)$, whichever is correct. Part~(4) only contains a single $\mu$-invariant.
 So all that needs to be checked is that parts~(3) and~(4) occur in the correct order, depending on whether $\theta_k$ comes first or last
 in the positive ordering. But this is precisely the difference between the left and right pictures above (depending on whether vertex~$k$ or~$n$ is on the left of the oriented edge $k+1 \to k+2$), and parts~(3) and~(4) are positioned differently in the two cases.
\end{proof}

\subsection{Proof of Theorem~\ref{thm:main} for generic triangulations}
By a generic triangulation, we mean a polygon in which every triangle has at least one boundary edge. This is the same hypothesis that appears in Proposition~\ref{prop:equiv_rel} because of Remark~\ref{rem:subtriang}. Given a generic triangulation $T$ with $n$ fans, we first apply the flip sequence in Theorem~\ref{thm:mu_single_fan} on each of the fans,
which result in a zig-zag (sub)triangulation $T'$ whose vertices are the fan centers of~$T$ (including $c_0$ and $c_{N+1}$). See Figure~\ref{fig:main_flip_sequence}.

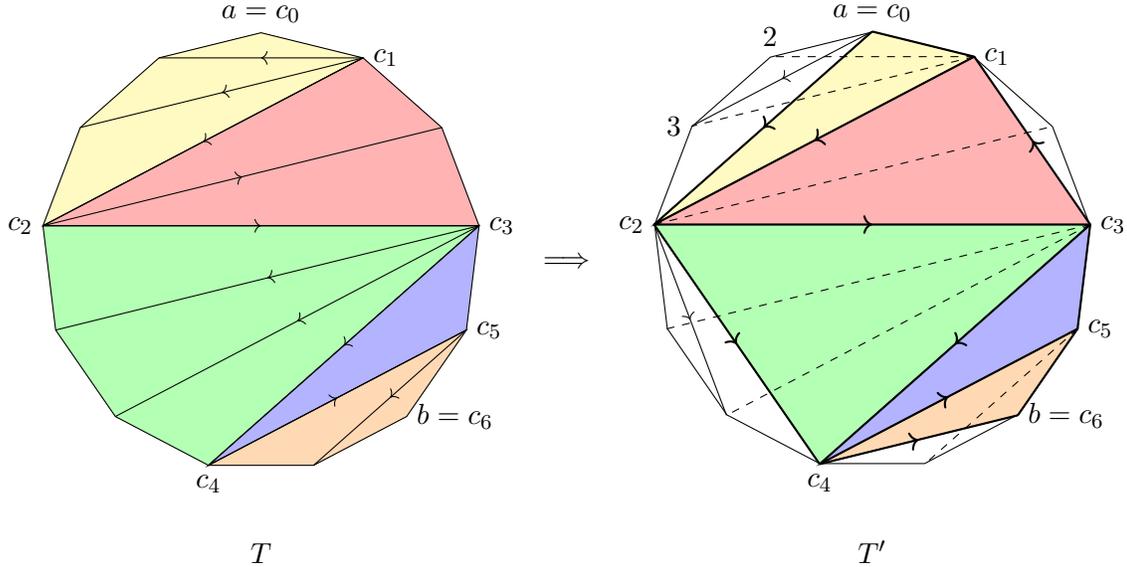
\begin{figure}[h]
\centering
\begin{tikzpicture}[decoration={
    markings,
    mark=at position 0.5 with {\arrow{>}}}
    ] 

\tikzstyle{every path}=[draw] 
		\path
    node[
      regular polygon,
      regular polygon sides=13,
      draw,
      inner sep=2cm,
    ] (T) {}
    %
    (T.corner 1) node[above] {$a=c_0$}
    (T.corner 2) node[above] {}
    (T.corner 3) node[left] {}
    (T.corner 4) node[left] {$c_2$}
    (T.corner 5) node[below] {}
    (T.corner 6) node[left] {}
    (T.corner 7) node[below] {$c_4$}
    (T.corner 8) node[below] {}
    (T.corner 9) node[right] {$b=c_6$}
    (T.corner 10) node[right] {$c_5$}
    (T.corner 11) node[right] {$c_3$}
    (T.corner 12) node[right] {}
    (T.corner 13) node[right] {$c_1$}
;

\coordinate (c0) at (T.corner 1);
\coordinate (c1) at (T.corner 13);
\coordinate (c2) at (T.corner 4);
\coordinate (c3) at (T.corner 11);
\coordinate (c4) at (T.corner 7);
\coordinate (c5) at (T.corner 10);
\coordinate (c6) at (T.corner 9);

\draw[fill=yellow!30!white] (c1)--(T.corner 4)--(T.corner 3)--(T.corner 2)--(T.corner 1)--cycle;
\draw[fill=red!30!white] (T.corner 13)--(T.corner 4)--(T.corner 11)--(T.corner 12)--cycle;

\draw[fill=green!30!white] (T.corner 11)-- (T.corner 4)--(T.corner 5)--(T.corner 6)--(T.corner 7)--cycle;
\draw[fill=blue!30!white] (T.corner 10)--(T.corner 7)--(T.corner 11)--cycle;   
\draw[fill=orange!30!white] (T.corner 10)--(T.corner 9)--(T.corner 8)--(T.corner 7)--cycle;

\draw[postaction={decorate}] (c1)--(T.corner 4);
\draw[postaction={decorate}] (c1)--(T.corner 2);
\draw[postaction={decorate}] (c1)--(T.corner 3);

\draw[postaction={decorate}](c2)--(T.corner 12);
\draw[postaction={decorate}](c2)--(T.corner 11);

\draw[postaction={decorate}](c3)--(T.corner 5);
\draw[postaction={decorate}](c3)--(T.corner 6);
\draw[postaction={decorate}](c3)--(T.corner 7);

\draw[postaction={decorate}](c4)--(c5);

\draw[postaction={decorate}] (T.corner 10)--(T.corner 8);

\node at (0,-4) {$T$};
\end{tikzpicture}
\begin{tikzpicture}
	\node at (0,0) {$\Longrightarrow$};
	\node at (0,-4) {};
\end{tikzpicture}
\begin{tikzpicture}[decoration={
    markings,
    mark=at position 0.5 with {\arrow{>}}}
    ] 

\tikzstyle{every path}=[draw] 
		\path
    node[
      regular polygon,
      regular polygon sides=13,
      draw,
      inner sep=2cm,
    ] (T) {}
    %
    (T.corner 1) node[above] {$a=c_0$}
    (T.corner 2) node[above] {2}
    (T.corner 3) node[left] {3}
    (T.corner 4) node[left] {$c_2$}
    (T.corner 5) node[below] {}
    (T.corner 6) node[left] {}
    (T.corner 7) node[below] {$c_4$}
    (T.corner 8) node[below] {}
    (T.corner 9) node[right] {$b=c_6$}
    (T.corner 10) node[right] {$c_5$}
    (T.corner 11) node[right] {$c_3$}
    (T.corner 12) node[right] {}
    (T.corner 13) node[right] {$c_1$}
;

\coordinate (c0) at (T.corner 1);
\coordinate (c1) at (T.corner 13);
\coordinate (c2) at (T.corner 4);
\coordinate (c3) at (T.corner 11);
\coordinate (c4) at (T.corner 7);
\coordinate (c5) at (T.corner 10);
\coordinate (c6) at (T.corner 9);

\draw[fill=yellow!30!white] (c0)--(c1)--(c2)--(c0);
\draw[fill=red!30!white] (c1)--(c2)--(c3)--(c1);
\draw[fill=green!30!white] (c2)--(c3)--(c4)--(c2);
\draw[fill=orange!30!white] (c4)--(c5)--(c6)--(c4);

\draw[fill=blue!30!white] (T.corner 10)--(T.corner 7)--(T.corner 11)--cycle; 

\draw[thick](c0)--(c1);
\draw[thick](c3)--(c5)--(c6);

\draw[postaction={decorate},thick] (c1)--(c2);
\draw[dashed] (c1)--(T.corner 2);
\draw[dashed] (c1)--(T.corner 3);

\draw[postaction={decorate}] (c0)--(T.corner 3);
\draw[postaction={decorate},thick] (c0)--(c2);

\draw[dashed](c2)--(T.corner 12);
\draw[postaction={decorate},thick](c2)--(c3);
\draw[postaction={decorate},thick](c3)--(c1);

\draw[dashed](c3)--(T.corner 5);
\draw[dashed](c3)--(T.corner 6);
\draw[postaction={decorate},thick](c3)--(c4);

\draw[postaction={decorate}] (c2)--(T.corner 6);
\draw[postaction={decorate},thick](c2)--(c4);

\draw[postaction={decorate},thick] (c4)--(c5);

\draw[dashed] (c5)--(T.corner 8);

\draw[postaction={decorate},thick] (c4)--(c6);

\node at (0,-4) {$T'$};
\end{tikzpicture}
\caption{Flipping edges in each fan, to turn a generic triangulation $T$ into a zig-zag triangulation $T'$.}\label{fig:main_flip_sequence}
\end{figure}

We will then derive our ultimate formula for $\lambda_{ab}$ in $T$ via a combination of Theorem~\ref{thm:mu_single_fan} and Theorem~\ref{thm:proof_zigzag}.
Using Theorem~\ref{thm:proof_zigzag}, we can express $\lambda_{ab}$ in terms of super $T$-paths on $T'$.
This expression uses certain $\lambda$-lengths and $\mu$-invariants that are not in $T$, so we will substitute their expansion
in terms of $T$ using Theorem~\ref{thm:mu_single_fan}.

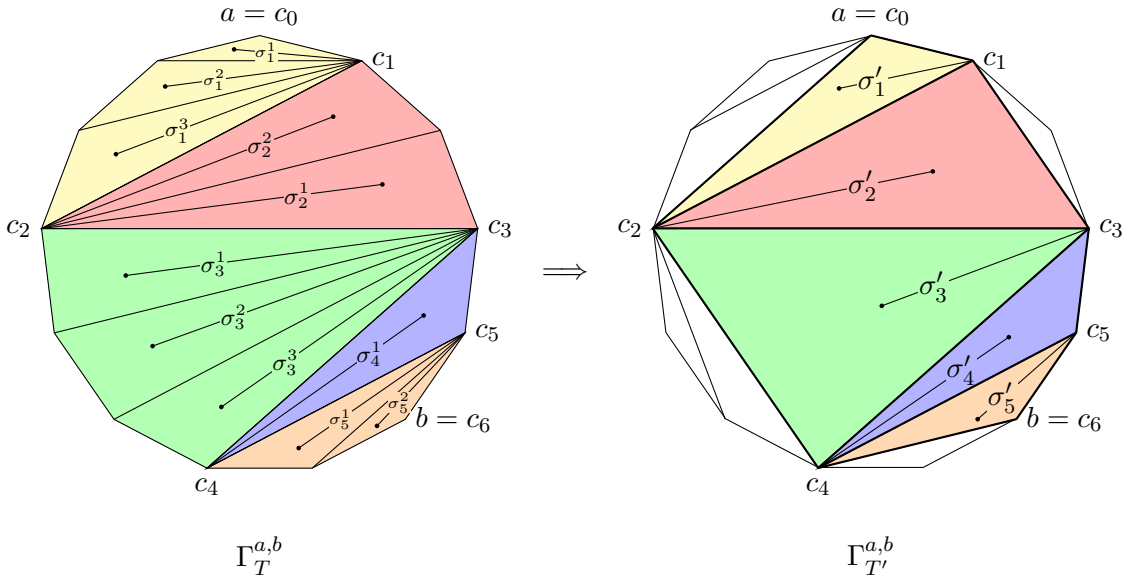
\begin{figure}[h]
\begin{tikzpicture}[decoration={
    markings,
    mark=at position 0.5 with {\arrow{>}}}
    ]

\tikzstyle{every path}=[draw]
		\path
    node[
      regular polygon,
      regular polygon sides=13,
      draw,
      inner sep=2cm,
    ] (T) {}
    %
    (T.corner 1) node[above] {$a=c_0$}
    (T.corner 2) node[above] {}
    (T.corner 3) node[left] {}
    (T.corner 4) node[left] {$c_2$}
    (T.corner 5) node[below] {}
    (T.corner 6) node[left] {}
    (T.corner 7) node[below] {$c_4$}
    (T.corner 8) node[below] {}
    (T.corner 9) node[right] {$b=c_6$}
    (T.corner 10) node[right] {$c_5$}
    (T.corner 11) node[right] {$c_3$}
    (T.corner 12) node[right] {}
    (T.corner 13) node[right] {$c_1$}
;

\coordinate (c0) at (T.corner 1);
\coordinate (c1) at (T.corner 13);
\coordinate (c2) at (T.corner 4);
\coordinate (c3) at (T.corner 11);
\coordinate (c4) at (T.corner 7);
\coordinate (c5) at (T.corner 10);
\coordinate (c6) at (T.corner 9);

\draw[fill=yellow!30!white] (c1)--(T.corner 4)--(T.corner 3)--(T.corner 2)--(T.corner 1)--cycle;
\draw[fill=red!30!white] (T.corner 13)--(T.corner 4)--(T.corner 11)--(T.corner 12)--cycle;

\draw[fill=green!30!white] (T.corner 11)-- (T.corner 4)--(T.corner 5)--(T.corner 6)--(T.corner 7)--cycle;
\draw[fill=blue!30!white] (T.corner 10)--(T.corner 7)--(T.corner 11)--cycle;
\draw[fill=orange!30!white] (T.corner 10)--(T.corner 9)--(T.corner 8)--(T.corner 7)--cycle;

\draw[] (c1)--(T.corner 4);
\draw[] (c1)--(T.corner 2);
\draw[] (c1)--(T.corner 3);

\draw[](c2)--(T.corner 12);
\draw[](c2)--(T.corner 11);

\draw[](c3)--(T.corner 5);
\draw[](c3)--(T.corner 6);
\draw[](c3)--(T.corner 7);

\draw[](c4)--(c5);

\draw[] (T.corner 10)--(T.corner 8);

\coordinate (m11) at ($0.45*(c0)+0.15*(c1)+0.4*(T.corner 2)$);
\coordinate (m52) at ($0.45*(c6)+0.15*(c5)+0.4*(T.corner 8)$);
\coordinate (m12) at ($0.18*(c1)+0.37*(T.corner 3)+0.45*(T.corner 2)$);
\coordinate (m13) at ($0.18*(c1)+0.37*(c2)+0.45*(T.corner 3)$);
\coordinate (m22) at ($0.18*(c2)+0.37*(T.corner 12)+0.45*(c1)$);
\coordinate (m21) at ($0.18*(c2)+0.45*(T.corner 12)+0.37*(c3)$);

\coordinate (m31) at ($0.18*(c3)+0.45*(T.corner 5)+0.37*(c2)$);
\coordinate (m32) at ($0.18*(c3)+0.45*(T.corner 5)+0.37*(T.corner 6)$);
\coordinate (m33) at ($0.18*(c3)+0.45*(c4)+0.37*(T.corner 6)$);

\coordinate (m41) at ($0.18*(c4)+0.4*(c3)+0.42*(c5)$);

\coordinate (m51) at ($0.18*(c5)+0.4*(c4)+0.44*(T.corner 8)$);

\draw (c1)--(m11) node [near end, fill=yellow!30!white,inner sep=0] {\tiny$\sigma_{1}^{1}$};
\draw (c1)--(m12) node [near end, fill=yellow!30!white,inner sep=0.2] {\tiny$\sigma_{1}^{2}$};
\draw (c1)--(m13) node [near end, fill=yellow!30!white,inner sep=0.2] {\scriptsize$\sigma_{1}^{3}$};
\draw (c2)--(m21) node [near end, fill=red!30!white,inner sep=0.2] {\footnotesize$\sigma_{2}^{1}$};
\draw (c2)--(m22) node [near end, fill=red!30!white,inner sep=0.2] {\footnotesize$\sigma_{2}^{2}$};

\draw (c3)--(m31) node [near end, fill=green!30!white,inner sep=0.2] {\footnotesize$\sigma_{3}^{1}$};
\draw (c3)--(m32) node [near end, fill=green!30!white,inner sep=0.2] {\footnotesize$\sigma_{3}^{2}$};
\draw (c3)--(m33) node [near end, fill=green!30!white,inner sep=0.2] {\footnotesize$\sigma_{3}^{3}$};

\draw (c4)--(m41) node [near end, fill=blue!30!white,inner sep=0] {\footnotesize$\sigma_{4}^{1}$};

\draw (c5)--(m51) node [near end, fill=orange!30!white,inner sep=0] {\tiny$\sigma_{5}^{1}$};

\draw (c5)--(m52) node [near end, fill=orange!30!white,inner sep=0] {\tiny$\sigma_{5}^{2}$};

\foreach \t in {11,12,13,21,22,31,32,33,41,51,52}{\draw (m\t) node [fill,circle,scale=0.17]{};}

\node at (0,-4) {$\Gamma_T^{a,b}$};
\end{tikzpicture}
\begin{tikzpicture}
	\node at (0,0) {$\Longrightarrow$};
	\node at (0,-4) {};
\end{tikzpicture}
\begin{tikzpicture}[decoration={
    markings,
    mark=at position 0.5 with {\arrow{>}}}
    ]

\tikzstyle{every path}=[draw]
		\path
    node[
      regular polygon,
      regular polygon sides=13,
      draw,
      inner sep=2cm,
    ] (T) {}
    %
    (T.corner 1) node[above] {$a=c_0$}
    (T.corner 2) node[above] {}
    (T.corner 3) node[left] {}
    (T.corner 4) node[left] {$c_2$}
    (T.corner 5) node[below] {}
    (T.corner 6) node[left] {}
    (T.corner 7) node[below] {$c_4$}
    (T.corner 8) node[below] {}
    (T.corner 9) node[right] {$b=c_6$}
    (T.corner 10) node[right] {$c_5$}
    (T.corner 11) node[right] {$c_3$}
    (T.corner 12) node[right] {}
    (T.corner 13) node[right] {$c_1$}
;

\coordinate (c0) at (T.corner 1);
\coordinate (c1) at (T.corner 13);
\coordinate (c2) at (T.corner 4);
\coordinate (c3) at (T.corner 11);
\coordinate (c4) at (T.corner 7);
\coordinate (c5) at (T.corner 10);
\coordinate (c6) at (T.corner 9);

\draw[fill=yellow!30!white] (c0)--(c1)--(c2)--(c0);
\draw[fill=red!30!white] (c1)--(c2)--(c3)--(c1);
\draw[fill=green!30!white] (c2)--(c3)--(c4)--(c2);
\draw[fill=orange!30!white] (c4)--(c5)--(c6)--(c4);
\draw[fill=blue!30!white] (T.corner 10)--(T.corner 7)--(T.corner 11)--cycle;

\coordinate (m1) at ($0.3*(c2)+0.33*(c1)+0.43*(c0)$) ;
\coordinate (m2) at ($0.3*(c2)+0.33*(c1)+0.43*(c3)$) ;
\coordinate (m3) at ($0.3*(c2)+0.33*(c4)+0.43*(c3)$) ;
\coordinate (m4) at ($0.3*(c5)+0.33*(c4)+0.43*(c3)$) ;
\coordinate (m5) at ($0.3*(c5)+0.33*(c4)+0.43*(c6)$) ;

\draw (c1)--(m1) node [near end, fill=yellow!30!white, inner sep=0.27] {$\sigma_1'$};
\draw (c2)--(m2) node [near end, fill=red!30!white, inner sep=0.27] {$\sigma_2'$};
\draw (c3)--(m3) node [near end, fill=green!30!white, inner sep=0.27] {$\sigma_3'$};
\draw (c4)--(m4) node [near end, fill=blue!30!white, inner sep=0.27] {$\sigma_4'$};
\draw (c5)--(m5) node [near end, fill=orange!30!white, inner sep=0.27] {$\sigma_5'$};

\foreach \t in {1,...,5}{\draw (m\t) node [fill,circle,scale=0.17]{};}

\node at (0,-4) {$\Gamma_{T'}^{a,b}$};

\draw[] (c2)--(T.corner 6);
\draw[thick](c2)--(c4);

\draw[thick] (c4)--(c5);

\draw[thick] (c4)--(c6);
\draw[thick](c0)--(c1);
\draw[thick](c3)--(c5)--(c6);

\draw[thick] (c1)--(c2);

\draw[] (T.corner 3)--(c0);
\draw[thick] (c2)--(c0);

\draw[thick](c2)--(c3);
\draw[thick](c3)--(c1);

\draw[thick](c3)--(c4);

\end{tikzpicture} 
\caption{The auxiliary graphs for $T$ and $T'$, with $\tau$-edges omitted.}\label{fig:main_flip_sequence_labelled}
\end{figure}

We consider the fans of $T$ as subtriangulations, and denote them as $F_1,\dots,F_N$.
We denote the $\mu$-invariants of $T'$ by $\theta_1',\dots,\theta_N'$ and their corresponding $\sigma$-edges to be $\sigma_1',\dots,\sigma_N'$.
We denote the $\mu$-invariants of the $i$-th fan of $T$ by $\theta_i^1,\theta_i^2,\dots$ (ordered counterclockwise around the fan center),
and the corresponding $\sigma$-edges $\sigma_i^1,\sigma_i^2,\dots$. See Figure~\ref{fig:main_flip_sequence_labelled}.

When we substitute super $T$-path expressions for the fans in $T$ into $T'$, we only need to consider two cases:
(1) substitute a boundary edge $(c_{i-1},c_{i+1})$ for $1\leqslant i\leqslant n$,
and (2) substitute a super-step $(\dots,\sigma_i',\tau_{ij},\sigma_j',\dots\,|\,\dots)$,
because every super $T$-path is a concatenation of super steps and complete ordinary $T$-paths.

\begin{enumerate} \itemsep=0pt
 \item[(1)] Suppose we were to replace $\lambda_{c_{i-1},c_{i+1}}$ with its super $T$-path expansion in the $i$-th fan.
 In the super $T$-path of $T'$, the edge $(c_{i-1},c_{i+1})$ must be an odd step because it does not cross the arc $(a,b)$.
 Therefore when we replace it with a super $T$-path in~$F_i$, the indexing agrees up to parity.
Moreover, if an edge crosses the arc $(c_{i-1},c_{i+1})$ in $F_i$, then it must also cross the arc $(a,b)$ in the bigger triangulation~$T$.
 Therefore the axiom (T5) is satisfied, and it is straightforward to verify that all other axioms will follow.

 \item[(2)] We observe that the left-hand side of Theorem~\ref{thm:mu_single_fan}(a)
 equals $\wt(\sigma_i^\prime)$ with respect to $T^\prime$. Using the rest of Theorem~\ref{thm:mu_single_fan}(a), we get the equality
 \[\wt(\sigma_i^\prime)=\sum_{j}\wt\big(\sigma_i^j\big),\]
 i.e., we have that the weight of $\sigma_i^\prime$ in $T^\prime$ is equal to the weighted sum of all $\sigma$-edges in the fan~$F_i$.
 This means that a super step $(\dots,\sigma_i^\prime,\tau_{ij},\sigma_j^\prime,\dots)$ will be expanded into the sum of all
 super steps from a face of $F_i$ to a face of $F_j$. This clearly preserves all axioms of super $T$-paths.
\end{enumerate}

For the converse, we need to prove that every super $T$-path in $T$ can be obtained by such a~substitution.
First, this is clearly true for ordinary $T$-paths, or an ordinary sub-path of a super $T$-path. Therefore we only need to consider the super steps.
Suppose we have a super-step using two $\sigma$-steps $\sigma^*$ and $\sigma^\bullet$, if $\sigma^*$ and $\sigma^\bullet$ are in the same fan,
say $F_i$, then the super step is part of the expansion of $\lambda$-length of $(c_{i-1},c_{i+1})$.
If the $\sigma^*\in F_i$ and $\sigma^\bullet\in F_j$ are in different fans, then the super step came from the super-step
$(\dots,\sigma_i^\prime,\tau_{ij},\sigma_j^\prime,\dots)$ in $T'$.

\section[Super-friezes from super lambda-lengths and mu-invariants]{Super-friezes from super $\boldsymbol{\lambda}$-lengths and $\boldsymbol{\mu}$-invariants}\label{sec:super-friezes}

In this section, we use our formulas for super $\lambda$-lengths and $\mu$-invariants to construct arrays that are variants
of the super-friezes appearing in work of Morier-Genoud, Ovsienko, and Tabachnikov~\cite{M-GOT15}.
Morier-Genoud et al.\ defined a \emph{super-frieze}
to be an array, whose rows alternate so that one row is all even elements, the next is all odd elements, etc. Consider a part of the array,
called an ``\emph{elementary diamond}'', of the form
\[\begin{array}{ccccc}
 & & B & & \\
 & \s{\Xi} & & \s{\Psi} & \\
 A & & & & D \\
 & \s{\Phi} & & \s{\Sigma} & \\
 & & C & &
 \end{array}
\]

Here, Roman letters are even elements and Greek letters are odd elements. The super-frieze rules are
\begin{gather}
AD-BC =1+ \s{\Sigma\Xi},\label{eq:frieze1}\\
AD-BC =1 + \s{\Psi}\s{\Phi},\label{eq:frieze1.5}\\
A\s{\Sigma }-C \s{\Xi} =\s{\Phi},\label{eq:frieze2}\\
B\s{\Sigma }-D \s{\Xi} =\s{\Psi},\label{eq:frieze3}\\
B\s{\Phi}-A\s{\Psi}=\s{\Xi},\label{eq:frieze4}\\
D\s{\Phi}-C\s{\Psi}=\s{\Sigma}.\label{eq:frieze5}
\end{gather}
To define a super-frieze, we only need equation~\eqref{eq:frieze1} or equation~\eqref{eq:frieze1.5} and any two of the four equations \eqref{eq:frieze2}--\eqref{eq:frieze5}. In other words, any two of equations \eqref{eq:frieze2}--\eqref{eq:frieze5} implies the other two, and utilizing these two equations, either of equation~\eqref{eq:frieze1} or equation~\eqref{eq:frieze1.5} implies the other.

We will now observe how the $\lambda$-lengths and $\mu$-invariants satisfy a modified version of these relations.
Put the $\lambda$-lengths in an array so that moving left-to-right along a~row rotates a~diagonal of the polygon
by shifting indices of both endpoints up by 1, and diagonals of the array going south-east have a common first endpoint.
In between these ordinary entries, we put a $\mu$-invariant multiplied by the square-root its two adjacent $\lambda$-lengths, so that $\tilde \mu_{ijk} = \sqrt{\l i j\l j k}\ \boxed{ijk}$ goes in between $\lambda_{ij}$ and $\lambda_{jk}$.
With these conventions, an elementary diamond looks as follows:
\[
 \begin{array}{ccccc}
 & & b & & \\
 & \tilde\theta & & \tilde\sigma' & \\
 e & & & & f \\
 & \tilde\sigma & & \tilde\theta' & \\
 & & d & &
 \end{array}
 =
 \begin{array}{ccccc}
 & & \lambda_{i+1,j} & & \\
 & \tilde \mu_{i,i+1,j} & & \tilde \mu_{i+1,j,j+1} & \\
 \lambda_{ij} & & & & \lambda_{i+1,j+1} \\
 &\tilde \mu_{i,j,j+1} & & \tilde\mu_{i,i+1,j+1} & \\
 & & \lambda_{i,j+1} & &
 \end{array}
\]
\begin{Proposition}\label{prop:frieze-mutation}
Every elementary diamond corresponds to a Ptolemy relation of a quadri\-la\-teral, with two boundary edges having $\lambda$-length $1$.	
\end{Proposition}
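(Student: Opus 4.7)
The plan is to explicitly identify, for each elementary diamond, the quadrilateral whose super Ptolemy relation reproduces it. Consider the diamond centered at indices $(i,j)$; I would take the quadrilateral with vertices $i, i+1, j, j+1$ arranged cyclically on the polygon (assuming $i+1 \leq j-1$ so that the four vertices are distinct). Its two diagonals are $(i,j)$ and $(i+1,j+1)$, matching the left and right entries $e = \lambda_{ij}$ and $f = \lambda_{i+1,j+1}$ of the diamond. Of its four sides, $(i,i+1)$ and $(j,j+1)$ connect consecutive polygon vertices and hence are polygon boundary edges, so by convention their $\lambda$-lengths are $1$; the other two sides, $(i+1,j)$ and $(i,j+1)$, are the top and bottom entries $b = \lambda_{i+1,j}$ and $d = \lambda_{i,j+1}$.

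Next, I would match the four $\tilde\mu$-entries in the diamond with the $\mu$-invariants of the four triangles arising from the two possible triangulations of this quadrilateral. The ``before flip'' triangulation with diagonal $(i,j)$ gives $\theta = \boxed{i,i+1,j}$ and $\sigma = \boxed{i,j,j+1}$, while the ``after flip'' triangulation with diagonal $(i+1,j+1)$ gives $\theta' = \boxed{i,i+1,j+1}$ and $\sigma' = \boxed{i+1,j,j+1}$. Applying the definition $\tilde\mu_{ijk} = \sqrt{\lambda_{ij}\lambda_{jk}}\,\boxed{ijk}$ to these four $\mu$-invariants recovers exactly the labels $\tilde\theta, \tilde\sigma', \tilde\sigma, \tilde\theta'$ placed in the diamond.

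With these identifications in place, each of the elementary diamond relations \eqref{eq:frieze1}--\eqref{eq:frieze5} becomes the super Ptolemy relation for this quadrilateral, specialized to $a = c = 1$. For example, equation~\eqref{eq:mutation} collapses to $ef - bd = 1 + \sqrt{bd}\,\sigma\theta$, which has the shape $AD - BC = 1 + (\text{odd bilinear})$ of \eqref{eq:frieze1}--\eqref{eq:frieze1.5}, and the four identities involving individual $\tilde\mu$'s follow by substituting the tilded expressions into \eqref{eq:5}--\eqref{eq:6} (equivalently into \eqref{eq:mu_mutation_neg}--\eqref{eq:mu_mutation_pos}).

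The step I expect to require the most care is matching the Grassmann bilinears on the right-hand sides of the diamond equations with the prescribed products of the $\tilde\mu$'s. Because the four $\tilde\mu$-entries are normalized by different pairs of adjacent $\lambda$-lengths (two involve the polygon boundary sides of the triangle, and two involve a diagonal $e$ or $f$), converting the super Ptolemy expressions into the required products of two $\tilde\mu$'s will rely on Proposition~\ref{lemma:theta32} together with the cross identity $\sigma\theta = \sigma'\theta'$ from equation~\eqref{eq:mu_cross} to pass between the $(\sigma,\theta)$ and $(\sigma',\theta')$ descriptions.
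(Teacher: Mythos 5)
Your proposal is correct and matches the paper's argument: identify the quadrilateral on vertices $i$, $i+1$, $j$, $j+1$, declare its two boundary edges to have $\lambda$-length $1$ (so $a=c=1$ in Figure~\ref{fig:super_ptolemy}), and check that the super Ptolemy relations under the tilde normalization reproduce equation~\eqref{eq:frieze1.5} and two of equations~\eqref{eq:frieze2}--\eqref{eq:frieze5}, which by the paper's earlier remark suffices. The one place to be careful is the normalization: each entry pairs a diagonal with a quadrilateral side, $\tilde\theta=\theta\sqrt{be}$, $\tilde\sigma=\sigma\sqrt{ed}$, $\tilde\theta'=\theta'\sqrt{df}$, $\tilde\sigma'=\sigma'\sqrt{bf}$ (the unit boundary edges never enter, contrary to your parenthetical), and the even relation follows most directly by substituting $\theta = \sigma\sqrt{bd}-\sigma'\sqrt{ef}$ from equation~\eqref{eq:6} and using $\sigma^2=0$, without needing Proposition~\ref{lemma:theta32}.
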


\begin{proof}Consider the super flip in the following diagram, where $a=c=1$:
\begin{center}
\begin{tikzpicture}[scale=0.7, baseline, thick]
 \draw (0,0) -- (3,0) -- (60:3) -- cycle;
 \draw (0,0) -- (3,0) -- (-60:3) -- cycle;

 \draw (0,0) -- node{\midarrow} (3,0);

 \draw node[above] at (70:1.5){$a$};
 \draw node[above] at (30:2.8){$b$};
 \draw node[below] at (-30:2.8){$c$};
 \draw node[below=-0.1] at (-70:1.5){$d$};
 \draw node[above] at (1,-0.12){$e$};

 \draw node[left] at (0,0) {};
 \draw node[above] at (60:3) {};
 \draw node[right] at (3,0) {};
 \draw node[below] at (-60:3) {};

 \draw node at (1.5,1){$\theta$};
 \draw node at (1.5,-1){$\sigma$};
\end{tikzpicture}
\begin{tikzpicture}[baseline]
 \draw[->, thick](0,0)--(1,0);
 \node[above] at (0.5,0) {};
\end{tikzpicture}
\begin{tikzpicture}[scale=0.7, baseline, thick,every node/.style={sloped,allow upside down}]
 \draw (0,0)--(60:3)--(-60:3)--cycle;
 \draw (3,0)--(60:3)--(-60:3)--cycle;

 \draw node[above] at (70:1.5) {$a$};
 \draw node[above] at (30:2.8) {$b$};
 \draw node[below] at (-30:2.8) {$c$};
 \draw node[below=-0.1] at (-70:1.5) {$d$};
 \draw node[left] at (1.7,1) {$f$};

 \draw (1.5,-2) --node {\midarrow} (1.5,2);

 \draw node[left] at (0,0) {};
 \draw node[above] at (60:3) {};
 \draw node[right] at (3,0) {};
 \draw node[below] at (-60:3) {};

 \draw node at (0.8,0){$\theta'$};
 \draw node at (2.2,0){$\sigma'$};
\end{tikzpicture}
\end{center}
In the super-diamond, we set $\tilde\theta=\theta\sqrt{be}$, $\tilde\sigma=\sigma\sqrt{ed}$, $\tilde\theta'=\theta'\sqrt{df}$, and $\tilde\sigma' = \sigma'\sqrt{bf}$.
Now, the super Ptolemy relation equation~\eqref{eq:mutation} is
\[ef=1+bd+\sqrt{bd}\sigma\theta.\]
Using equation~\eqref{eq:6}, we substitute $\theta$ with $\sigma\sqrt{bd}-\sigma'\sqrt{ef}$:
\[ef=1+bd+\sqrt{bd}\sigma\big(\sigma\sqrt{bd}-\sigma'\sqrt{ef}\big).\]
Then $\sigma$ squares to zero, so we have
\[ef=1+bd+\sqrt{bdef}\sigma'\sigma.\]
This is exactly equation~\eqref{eq:frieze1.5} in terms of these super-frieze entries, i.e.,
\[ef=1+bd+\tilde\sigma'\tilde\sigma.\]

The other two Ptolemy relations give rise to the desired super-frieze relations as well. The second Ptolemy relation equation~\eqref{eq:5} is \[\theta'\sqrt{ef}=\theta\sqrt{bd}+\sigma. \]
Substitute $\theta$'s with the $\tilde\theta$'s:
\begin{gather*}\tilde\theta'\frac{1}{\sqrt{df} }\sqrt{ef}=\tilde\theta\frac{1}{\sqrt{be}}\sqrt{bd}+\tilde\sigma\frac{1}{\sqrt{de}},\qquad
 \tilde\theta'\sqrt{\frac{e}{d}}= \tilde\theta\sqrt{d\over e}+\tilde\sigma\frac{1}{\sqrt{de}}.\end{gather*}
Then multiply by $\sqrt{de}$ to get
$\tilde\theta'e= \tilde\theta d+\tilde\sigma$.
The third Ptolemy relation equation~\eqref{eq:5} is
\[\sigma'\sqrt{ef}=\sigma\sqrt{bd}-\theta.\]
A similar calculation shows that this is equivalent to
\[\tilde\sigma'e=\tilde\sigma b-\tilde\theta.\]
These relations on the `modified' $\mu$-invariants are exactly the super-frieze relations.
\end{proof}

\begin{Theorem}
 Every super-frieze pattern comes from a decorated super-Teichm\"uller space of a~marked disk.
\end{Theorem}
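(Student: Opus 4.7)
The plan is to construct, from an arbitrary super-frieze pattern, a triangulated marked disk equipped with $\lambda$-lengths (with all boundary edges normalized to $1$) and $\mu$-invariants whose super $T$-path expansions, via Theorem~\ref{thm:main}, recover every entry of the frieze. This is the super analogue of the construction used to prove Conway--Coxeter's theorem in the classical setting.

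First, I would interpret the structural data of the pattern. The width of a super-frieze determines the number of marked points, so a pattern of the appropriate width corresponds to an $(n+3)$-gon $P$. Choose a fan triangulation $T$ of $P$ with center $c$, and identify a SE-diagonal of the pattern starting at an entry in the top row of $1$'s. The even-indexed entries along this diagonal are declared to be the $\lambda$-lengths of the arcs emanating from $c$ (with the two boundary edges meeting $c$ receiving $\lambda$-length $1$), and the odd-indexed entries are declared to be $\tilde{\mu}_{ijk} := \sqrt{\lambda_{ij}\lambda_{jk}}\,\boxed{ijk}$ for the triangles of $T$. All remaining boundary edges of $P$ receive $\lambda$-length $1$. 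This specifies a well-defined point of the decorated super-Teichm\"uller space of $P$.

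Second, I would define $\lambda$-lengths and scaled $\mu$-invariants for all other arcs and triangles of $P$ using the super Ptolemy expansions (equivalently, the expansion formulas of Theorem~\ref{thm:main} and Corollary~\ref{cor:laurent}), and arrange them into an array using the same conventions (diagonals corresponding to fixed endpoints, shifted indices along each row). By Proposition~\ref{prop:frieze-mutation}, every elementary diamond in this array automatically satisfies the super-frieze relations \eqref{eq:frieze1}--\eqref{eq:frieze5}, so the resulting array is itself a super-frieze. The two arrays then share their initial SE-diagonal by construction. Since any super-frieze is determined by such a diagonal, as each successive entry can be recovered inductively from the entries above-and-to-the-left via a single elementary diamond relation, the two patterns must coincide globally.

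The main obstacle is handling the signs of the $\mu$-invariants. As emphasized in Remark~\ref{rem:mu-invariants}, $\mu$-invariants are only well-defined up to sign depending on the choice of representative orientation of the spin structure, whereas super-frieze entries are fixed elements. Matching them requires selecting the default orientation of Definition~\ref{def:pos_order} adapted to the chosen fan, and verifying that the sign conventions of the elementary diamond equations \eqref{eq:frieze2}--\eqref{eq:frieze5} are consistent with the super Ptolemy signs in equations \eqref{eq:mu_mutation_neg}--\eqref{eq:mu_mutation_pos} at every propagation step in the frieze; if needed, Corollary~\ref{cor:main} is invoked to absorb a global sign change into the orientation. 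Once this sign matching is established, the rest of the argument is a direct comparison using Proposition~\ref{prop:frieze-mutation} together with the well-definedness of super $\lambda$-lengths stated in Remark~\ref{rem:lambda}.
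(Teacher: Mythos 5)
Your overall strategy is the same as the paper's: take a SE-diagonal of the super-frieze as the initial data of a fan triangulation, give it geometric meaning as $\lambda$-lengths and (scaled) $\mu$-invariants, invoke Proposition~\ref{prop:frieze-mutation} to match each elementary diamond with a super Ptolemy flip, and propagate through the frieze inductively. However, the step you label as the ``main obstacle'' and then defer --- the sign consistency of the $\mu$-invariants across the entire pattern --- is in fact the substantive content of the proof, and your sketch does not resolve it.

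Concretely, here is what is missing. After flipping the $n$ interior arcs $x_1,\dots,x_n$ of the fan, one obtains a fan centered at the \emph{next} vertex of the polygon, but the orientation carried along by the super Ptolemy relations is not the default orientation for that new fan: the last flip leaves one edge pointing the wrong way. To restore the default orientation (and hence to make the next application of Proposition~\ref{prop:frieze-mutation} produce the next diagonal of the frieze, rather than a diagonal with a spurious sign), one must reverse the arrows around a single triangle and negate its $\mu$-invariant, per Remark~\ref{rem:mu-invariants}. This is not a single ``global'' correction; it happens once per pass and accumulates. The paper verifies that after $n$ passes all boundary orientations are reversed, hence \emph{all} $\mu$-invariants are negated, and identifies this with the glide symmetry of super-frieze patterns. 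Matching that against the actual sign behavior of a super-frieze also requires a specific structural fact proved in the Morier-Genoud--Ovsienko--Tabachnikov paper (their Proposition 2.3.1): the top-most nontrivial odd row repeats every other entry, while the bottom-most alternates in sign. Your proposal makes no contact with either the per-pass orientation correction or this structural property, so the inductive propagation cannot be completed as stated.

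Finally, invoking Corollary~\ref{cor:main} to ``absorb a global sign change'' is not the right tool here. That corollary corrects the \emph{$\lambda$-length} expansion for a non-default orientation by weighting each super $T$-path with $(-1)^{\inv(t)}$; it says nothing about re-aligning the signs of individual $\mu$-invariants with the fixed odd entries of a super-frieze, which is what is actually needed. The correct mechanism is the triangle-reversal move of Proposition~\ref{prop:equiv_rel} and Remark~\ref{rem:mu-invariants}, applied once per flip-pass, as above.
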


\begin{proof}
 Take a diagonal of even and odd variables from a super-frieze, and we declare it to be the $\lambda$-lengths
 of a fan triangulation with the default orientation (see Figure~\ref{fig:frieze-triangulation}).

 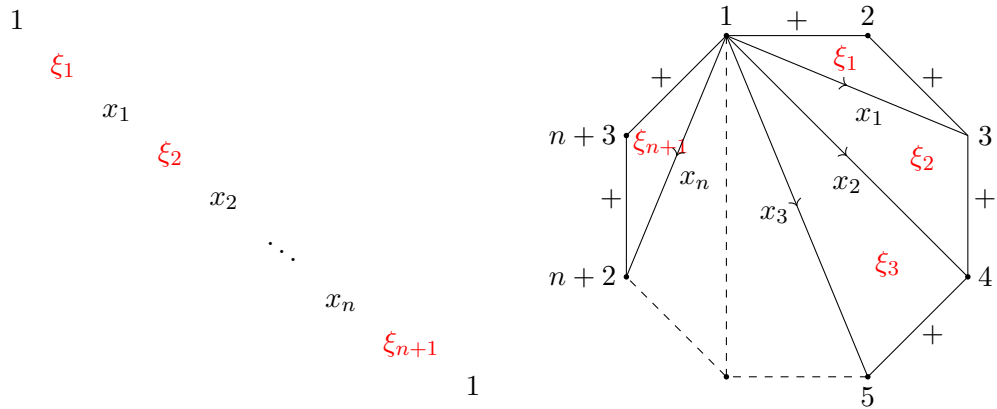
\begin{figure}[h]
 \centering
 \begin{tikzpicture}
 \node at (0,0) {$\displaystyle
 \begin{array}{ccccccccccc}
 1\\[3pt]
 &{\color{red}\xi_1}&&\\[3pt]
 &&x_1\\[3pt]
 &&&{\color{red}\xi_2}&&\\[3pt]
 &&&&x_2&&\\[3pt]
 &&&&&\ddots&&&&\\[3pt]
 &&&&&&x_n\\[3pt]
 &&&&&&&{\color{red}\xi_{n+1}}&&\\[3pt]
 &&&&&&&&1
 \end{array}
 $};
 \end{tikzpicture}
 \begin{tikzpicture}[decoration={
 markings,
 mark=at position 0.5 with {\arrow{>}}}
 ]

 \tikzstyle{every path}=[draw]
 \path
 node[
 regular polygon,
 regular polygon sides=8,
 draw=none,
 inner sep=1.6cm,
 ] (T) {}
 %
 (T.corner 1) node[above] {$2$}
 (T.corner 2) node[above] {$1$}
 (T.corner 3) node[left] {$n+3$}
 (T.corner 4) node[left] {$n+2$}
 (T.corner 5) node[below] {}
 (T.corner 6) node[below] {$5$}
 (T.corner 7) node[right] {$4$}
 (T.corner 8) node[right] {$3$}
 ;
 \draw [-] (T.corner 4) to (T.corner 3) to node[label={[xshift=-0.2cm, yshift=-1.2cm]$\color{red} \xi_{n+1}$}] {} (T.corner 2) to node[label={[xshift=0.65cm, yshift=-0.74cm]$\color{red} \xi_1$}] {} (T.corner 1) to (T.corner 8) to node[label={[xshift=-0.6cm, yshift=0.2cm]$\color{red} \xi_2$}] {} (T.corner 7) to node[label={[xshift=-0.4cm, yshift=0.4cm]$\color{red} \xi_3$}] {} (T.corner 6);
 \draw [dashed] (T.corner 6) to (T.corner 5) to (T.corner 4);

 \draw [postaction={decorate}] (T.corner 2) to node[label={[xshift=-0.3cm, yshift=-0.5cm]$x_3$}] {}(T.corner 6);

 \draw [postaction={decorate}] (T.corner 2) to node[label= below:$x_2$] {} (T.corner 7);
 \draw [postaction={decorate}] (T.corner 2) to node[label={[xshift=0.25cm, yshift=-0.7cm]$x_n$}] {} (T.corner 4);
 \draw [dashed] (T.corner 5) to (T.corner 2);

 \foreach \x in {1,2,...,7}{
 \draw (T.corner \x) node [fill,circle,scale=0.2] {};}

 \draw[postaction={decorate}] (T.corner 12)--(T.corner 4);

 \draw [postaction={decorate}] (T.corner 2) to node[label={[xshift=0.3cm, yshift=-0.8cm]$x_1$}] {}(T.corner 8);

 \draw [-,draw=none] (T.corner 4) to node [label={[xshift=-0.2cm, yshift=-0.3cm]$+$}] {} (T.corner 3) to node [label={[xshift=-0.2cm, yshift=-0.3cm]$+$}] {} (T.corner 2) to node [label={[xshift=0cm, yshift=-0.2cm]$+$}] {} (T.corner 1) to
 node [label={[xshift=0.2cm, yshift=-0.3cm]$+$}] {}
 (T.corner 8) to
 node [label={[xshift=0.23cm, yshift=-0.3cm]$+$}] {} (T.corner 7) to
 node [label={[xshift=0.2cm, yshift=-0.5cm]$+$}] {} (T.corner 6);
 \end{tikzpicture}
 \caption{{Left:} Diagonal of a super-frieze. {Right:} Initial fan triangulation, the $+$ signs records the initial orientation of the spin structure.} \label{fig:frieze-triangulation}
 \end{figure}

 Using Proposition~\ref{prop:frieze-mutation}, the next diagonal to the right corresponds to the triangulation obtained by flipping the edges $x_1,x_2,\dots,x_n$.
 Suppose the next diagonal of the frieze is as follows:\footnote{Using~\cite[Proposition~2.3.1]{M-GOT15}, the top-most non-trivial row of odd variables
 repeats every-other entry while the bottom-most non-trivial row of odd variables alternates in sign every-other entry.}

 \begin{tikzpicture}
 \node at (0,0) {$\begin{array}{cccccccccccccccccc}
 {\color{white}\xi_{n+1}}&{\color{white}\xi_{n+1}}&{\color{white}\xi_{n+1}}&{\color{white}\xi_{n+1}}&{\color{white}\xi_{n+1}}&{\color{white}\xi_{n+1}}&{\color{white}\xi_{n+1}}&{\color{white}\xi_{n+1}}&{\color{white}\xi_{n+1}}&{\color{white}\xi_{n+1}}&{\color{white}\xi_{n+1}}
 \\
 1&&&&1\\[8pt]
 &{\color{red}\xi_1}&&\s{\xi_1}&&\s{\theta_1}\\[8pt]
 &&x_1&&&& y_1\\[8pt]
 &&&{\color{red}\xi_2}&&\s{*}&&\s{\theta_2}\\[8pt]
 &&&&x_2&&&&\ddots\\[8pt]
 &&&&&\ddots&&&&\s{\theta_n}\\[8pt]
 &&&&&&x_n&&&&y_n\\[8pt]
 &&&&&&&{\color{red}\xi_{n+1}}&&\s{\theta_{n+1}}&&\s{-\theta_{n+1}}\\[8pt]
 &&&&&&&&1&&&&1
 \end{array}
 $};

 \draw [color=blue,rounded corners=15] (1.75,0.8) to (-1.8,3.6) to (-2.8,3) to (3.6,-2) to (1.5,-3.5) to (2.5,-4.1) to (3.9,-3.05);
 \draw[color=blue,rounded corners=15] (3.9,-3.05) to (5.3,-2) to (1.75,0.8);

 \draw [color=red,rounded corners=15] (-3.05,2.95) -- (6.6,-4.5) -- (7.6,-3.5) -- (-1.6,3.8) -- cycle;
 \end{tikzpicture}

 \looseness=1 Meanwhile, applying super-flips on the edges $x_1,\dots,x_n$ gives us the following
 triangulation:\footnote{Notice that here we apply the clockwise flip sequence, as opposed to the counterclockwise one in Theorem~\ref{thm:mu_single_fan}.
 The reason of switching the convention here is to have the `wrong' arrow on the edge $y_n$ to match up the frieze relations of the last quiddity row.}

 \begin{center}
 \begin{tikzpicture}[decoration={
 markings,
 mark=at position 0.5 with {\arrow{>}}}
 ]

 \tikzstyle{every path}=[draw]
 \path
 node[
 regular polygon,
 regular polygon sides=8,
 draw=none,
 inner sep=1.8cm,
 ] (T) {}
 %
 (T.corner 1) node[above] {$2$}
 (T.corner 2) node[above] {$1$}
 (T.corner 3) node[left] {$n+3$}
 (T.corner 4) node[left] {$n+2$}
 (T.corner 5) node[below] {}
 (T.corner 6) node[below] {$5$}
 (T.corner 7) node[right] {$4$}
 (T.corner 8) node[right] {$3$}
 ;
 \draw [-] (T.corner 4) to (T.corner 3) to node[label={[xshift=0cm, yshift=-1.5cm]$\color{red} \theta_{n}$}] {} (T.corner 2) to node[label={[xshift=-0.6 cm, yshift=-0.8cm]$\color{red} \theta_{n+1}$}] {} (T.corner 1) to (T.corner 8) to node[label={[xshift=-0.3cm, yshift=0.2cm]$\color{red} \theta_1$}] {} (T.corner 7) to node[label={[xshift=0cm, yshift=0.6cm]$\color{red} \theta_2$}] {} (T.corner 6);
 \draw [dashed] (T.corner 6) to (T.corner 5) to (T.corner 4);

 \draw [postaction={decorate}] (T.corner 1) to node[label={[xshift=-0.1cm, yshift=-0.8cm]$y_1$}] {}(T.corner 7);
 \draw [postaction={decorate}] (T.corner 1) to node[label={[xshift=-0.3cm, yshift=-0.8cm]$y_2$}] {}(T.corner 6);
 \draw [dashed] (T.corner 1) to (T.corner 5);
 \draw [postaction={decorate}] (T.corner 1) to node[label={[xshift=0cm, yshift=-1cm]$y_{n-1}$}] {}(T.corner 4);
 \draw [postaction={decorate}] (T.corner 3) to node[label={[xshift=-0.2cm, yshift=-0.8cm]$y_{n}$}] {}(T.corner 1);

 \draw [-,draw=none] (T.corner 4) to node [label={[xshift=-0.2cm, yshift=-0.3cm]$+$}] {} (T.corner 3) to node [label={[xshift=-0.2cm, yshift=-0.3cm]$+$}] {} (T.corner 2) to node [label={[xshift=0cm, yshift=-0.2cm]$+$}] {} (T.corner 1) to
 node [label={[xshift=0.2cm, yshift=-0.3cm]$-$}] {}
 (T.corner 8) to
 node [label={[xshift=0.23cm, yshift=-0.3cm]$+$}] {} (T.corner 7) to
 node [label={[xshift=0.2cm, yshift=-0.5cm]$+$}] {} (T.corner 6);
 \end{tikzpicture}
 \end{center}

 This corresponds to the blue-circled entries of the above frieze pattern. To obtain the next diagonal,
 we reverse all arrows on the `last' triangle (the triangle corresponding to $\theta_{n+1}$), and negate the $\mu$-invariant.
 This gives us the following triangulation, which corresponds to the red-circled entries of the above frieze:

 \begin{center}
 \begin{tikzpicture}[decoration={
 markings,
 mark=at position 0.5 with {\arrow{>}}}
 ]

 \tikzstyle{every path}=[draw]
 \path
 node[
 regular polygon,
 regular polygon sides=8,
 draw=none,
 inner sep=1.8cm,
 ] (T) {}
 %
 (T.corner 1) node[above] {$2$}
 (T.corner 2) node[above] {$1$}
 (T.corner 3) node[left] {$n+3$}
 (T.corner 4) node[left] {$n+2$}
 (T.corner 5) node[below] {}
 (T.corner 6) node[below] {$5$}
 (T.corner 7) node[right] {$4$}
 (T.corner 8) node[right] {$3$}
 ;
 \draw [-] (T.corner 4) to (T.corner 3) to node[label={[xshift=0cm, yshift=-1.5cm]$\color{red} \theta_{n}$}] {} (T.corner 2) to node[label={[xshift=-0.6 cm, yshift=-0.8cm]$\color{red}-\theta_{n+1}$}] {} (T.corner 1) to (T.corner 8) to node[label={[xshift=-0.3cm, yshift=0.2cm]$\color{red} \theta_1$}] {} (T.corner 7) to node[label={[xshift=0cm, yshift=0.6cm]$\color{red} \theta_2$}] {} (T.corner 6);
 \draw [dashed] (T.corner 6) to (T.corner 5) to (T.corner 4);

 \draw [postaction={decorate}] (T.corner 1) to node[label={[xshift=-0.1cm, yshift=-0.8cm]$y_1$}] {}(T.corner 7);
 \draw [postaction={decorate}] (T.corner 1) to node[label={[xshift=-0.3cm, yshift=-0.8cm]$y_2$}] {}(T.corner 6);
 \draw [dashed] (T.corner 1) to (T.corner 5);
 \draw [postaction={decorate}] (T.corner 1) to node[label={[xshift=0cm, yshift=-1cm]$y_{n-1}$}] {}(T.corner 4);
 \draw [postaction={decorate}] (T.corner 1) to node[label={[xshift=-0.2cm, yshift=-0.8cm]$y_{n}$}] {}(T.corner 3);

 \draw [-,draw=none] (T.corner 4) to node [label={[xshift=-0.2cm, yshift=-0.3cm]$+$}] {} (T.corner 3) to node [label={[xshift=-0.2cm, yshift=-0.3cm]$-$}] {} (T.corner 2) to node [label={[xshift=0cm, yshift=-0.2cm]$-$}] {} (T.corner 1) to
 node [label={[xshift=0.2cm, yshift=-0.3cm]$-$}] {}
 (T.corner 8) to
 node [label={[xshift=0.23cm, yshift=-0.3cm]$+$}] {} (T.corner 7) to
 node [label={[xshift=0.2cm, yshift=-0.5cm]$+$}] {} (T.corner 6);
 \end{tikzpicture}
 \end{center}

 Now the interior arrows are exactly the `same' as before: all arrows are directed away from the (new) fan center.
 Therefore, inductively, flipping the edges $y_1,\dots,y_n$ and negating the last triangle will give us the next diagonal.

Applying the above operations $n$ times will take us back to the original triangulation, but with different boundary orientations.
 In particular, all boundary arrows are reversed, which is equivalent to reversing the orientation of all triangles and negating all the $\mu$-invariants.

 \begin{center}
 \begin{tikzpicture}
	\node at (0,0) {$\displaystyle
 \begin{array}{ccccccccccc}
 1\\[3pt]
 &{\color{red}-\xi_1}&&\\[3pt]
 &&x_1\\[3pt]
 &&&{\color{red}-\xi_2}&&\\[3pt]
 &&&&x_2&&\\[3pt]
 &&&&&\ddots&&&&\\[3pt]
 &&&&&&x_n\\[3pt]
 &&&&&&&{\color{red}-\xi_{n+1}}&&\\[3pt]
 &&&&&&&&1
 \end{array}
 $};
 \end{tikzpicture}
 \begin{tikzpicture}[decoration={
 markings,
 mark=at position 0.5 with {\arrow{>}}}
 ]

 \tikzstyle{every path}=[draw]
		\path
 node[
 regular polygon,
 regular polygon sides=8,
 draw=none,
 inner sep=1.6cm,
 ] (T) {}
 %
 (T.corner 1) node[above] {$2$}
 (T.corner 2) node[above] {$1$}
 (T.corner 3) node[left] {$n+3$}
 (T.corner 4) node[left] {$n+2$}
 (T.corner 5) node[below] {}
 (T.corner 6) node[below] {$5$}
 (T.corner 7) node[right] {$4$}
 (T.corner 8) node[right] {$3$}
 ;
 \draw [-] (T.corner 4) to (T.corner 3) to node[label={[xshift=-0.2cm, yshift=-1.2cm]$\color{red} -\xi_{n+1}$}] {} (T.corner 2) to node[label={[xshift=0.65cm, yshift=-0.74cm]$\color{red} -\xi_1$}] {} (T.corner 1) to (T.corner 8) to node[label={[xshift=-0.6cm, yshift=0.2cm]$\color{red} -\xi_2$}] {} (T.corner 7) to node[label={[xshift=-0.4cm, yshift=0.4cm]$\color{red} -\xi_3$}] {} (T.corner 6);
 \draw [dashed] (T.corner 6) to (T.corner 5) to (T.corner 4);

 \draw [postaction={decorate}] (T.corner 2) to node[label={[xshift=-0.3cm, yshift=-0.5cm]$x_3$}] {}(T.corner 6);

 \draw [postaction={decorate}] (T.corner 2) to node[label= below:$x_2$] {} (T.corner 7);
 \draw [postaction={decorate}] (T.corner 2) to node[label={[xshift=0.25cm, yshift=-0.7cm]$x_n$}] {} (T.corner 4);
 \draw [dashed] (T.corner 5) to (T.corner 2);

 \foreach \x in {1,2,...,7}{
 \draw (T.corner \x) node [fill,circle,scale=0.2] {};}

 \draw[postaction={decorate}] (T.corner 12)--(T.corner 4);

 \draw [postaction={decorate}] (T.corner 2) to node[label={[xshift=0.3cm, yshift=-0.8cm]$x_1$}] {}(T.corner 8);

 \draw [-,draw=none] (T.corner 4) to node [label={[xshift=-0.2cm, yshift=-0.3cm]$-$}] {} (T.corner 3) to node [label={[xshift=-0.2cm, yshift=-0.3cm]$-$}] {} (T.corner 2) to node [label={[xshift=0cm, yshift=-0.2cm]$-$}] {} (T.corner 1) to
 node [label={[xshift=0.2cm, yshift=-0.3cm]$-$}] {}
 (T.corner 8) to
 node [label={[xshift=0.23cm, yshift=-0.3cm]$-$}] {} (T.corner 7) to
 node [label={[xshift=0.2cm, yshift=-0.5cm]$-$}] {} (T.corner 6);
 \end{tikzpicture}
 \end{center}

 Therefore the $n$-th diagonal will have the same even entries and negative odd entries as the first diagonal.
 This explains the glide symmetry of super-frieze patterns.
\end{proof}

\section{Conclusions and future directions}

\subsection[Expansion formulas for mu-invariants]{Expansion formulas for $\boldsymbol{\mu}$-invariants}

In Theorems~\ref{thm:mu_single_fan} and~\ref{thm:proof_zigzag}, we gave formulas for certain types of $\mu$-invariants. However,
these only applied to a subset of such triangles which have at least one side being an arc of the triangulation. The proofs depended on this assumption, and a specific flip sequence,
in order to apply the super Ptolemy relations. This begs the following question, subject to the ambiguity of Remark~\ref{rem:mu-invariants}, and with a specific flip sequence in mind.

\begin{Question}
 What is the correct formula for a $\mu$-invariant of a triangle which has no sides belonging to the triangulation?
\end{Question}

Looking more broadly, we can consider a study of $\lambda$-lenghths and $\mu$-invariants, subject to super Ptolemy relations, for other surfaces.
\begin{Question}
 What is the correct formula for a $\lambda$-length $($or a $\mu$-invariant$)$ for an arc $($resp.\ a triangle$)$ on an annlus, torus, or other surfaces with boundary?
\end{Question}

Note that for the special case of a once-punctured torus with no boundaries, such super structures were studied in~\cite{mcshane}.
The cases of a three-punctured sphere and of a~once-punctured torus were also investigated in \cite[Appendix~B]{ip2018n}, but differed from our setup.
Therein, they had two odd variables (rather than one) for each triangle{, along with an extra family of even variables associated to the edges}.

\subsection{Connections to super cluster algebras and super-friezes}\label{sec:subfrieze}

As we have demonstrated, we have been able to use Penner and Zeitlin's development of super $\lambda$-lengths for decorated super-Teichm\"uller space to obtain explicit formulas for super $\lambda$-lengths on marked disks which involves a construction of super $T$-paths.
In analogy with the classical case, as in \cite{schiffler_08} where weighted generating functions of $T$-paths correspond to cluster variables in cluster algebras of type $A_n$, we wish to investigate how our formulas for super $\lambda$-lengths could aid in the development of \emph{super} cluster algebras (of type $A_n$). Steps towards defining super cluster algebras appeared in work of Ovsienko~\cite{o_15} and separately in the work of Li, Mixco, Ransingh, and Srivastava~\cite{lmrs_17}. These initial steps were followed up by related work such as~\cite{OS19,OT18,sv_19}.

In particular, in \cite{OS19}, Ovsienko and Shapiro define a type of super cluster algebra, motivated by super-frieze patterns. In
their setup, some of the frozen vertices of the quiver correspond to odd variables $\theta_1,\dots,\theta_m$.
There can be paths of length 2 connecting the $\theta_i$ passing through the ``ordinary'' vertices: $\theta_i \to x_k \to \theta_j$.
The mutation rules are the same for all ordinary arrows, and additionally, when mutating at $x_k$,
\begin{enumerate}\itemsep=0pt
 \item[1)] for $\theta_i \to x_k \to \theta_j$, and for $x_k \to x_\ell$, add a 2-path $\theta_i \to x_\ell \to \theta_j$,
 \item[2)] reverse all 2-paths through $x_k$,
 \item[3)] cancel oppositely oriented 2-paths through $x_k$.
\end{enumerate}
The mutation formula for even variables is given by
\[  \mu_k(x_k) =\frac{1}{x_k}\bigg( \prod_{ x_k\to x_\ell} x_\ell+ \prod_{\theta_i \to x_k \to \theta_j} (1 + \theta_i \theta_j) \prod_{x_\ell \to x_k} x_\ell \bigg).\]

Ovsienko and Shapiro noticed that starting with certain initial data, one could construct a~quiver with odd vertices such that
all entries of the super-frieze can be obtained by mutations. Their choice of initial data
consists of all even entries along a NW-SE diagonal, along with the odd entries in the neighboring diagonal. This is pictured below:

\begin{center}
\(\quad\displaystyle \begin{array}{cccccccccccccccccc}
{\color{white}\xi_{n+1}}&{\color{white}\xi_{n+1}}&{\color{white}\xi_{n+1}}&{\color{white}\xi_{n+1}}&{\color{white}\xi_{n+1}}&{\color{white}\xi_{n+1}}&{\color{white}\xi_{n+1}}&{\color{white}\xi_{n+1}}&{\color{white}\xi_{n+1}}&{\color{white}\xi_{n+1}}&{\color{white}\xi_{n+1}}
\\
1&&&&\\[8pt]
&{\color{red}*}&&\s{\theta_1}&&\\[8pt]
&&x_1&&&&\\[8pt]
&&&{\color{red}*}&&\s{\theta_2}&&\\[8pt]
&&&&x_2&&\s{\ddots} &&\\[8pt]
&&&&&\ddots &&\s{\theta_m}&&\\[8pt]
&&&&&&x_m&&&&\\[8pt]
&&&&&&&{\color{red}*}&&\s{\theta_{m+1}}&&\\[8pt]
&&&&&&&&1&&&&
\end{array}
\)
\end{center}
To this set of initial data, they assign the following quiver:
$$
\begin{tikzpicture}
 \draw (0,0) node {$x_1$};
 \draw (2,0) node {$x_2$};
 \draw (4,0) node {$x_3$};
 \draw (6,0) node {$\cdots$};
 \draw (8,0) node {$x_m$};
 \draw[-latex] (0.5,0) -- (1.5,0);
 \draw[-latex] (2.5,0) -- (3.5,0);
 \draw (-1,1) node {\color{red}$\theta_1$};
 \draw (1, 1) node {\color{red}$\theta_2$};
 \draw (3, 1) node {\color{red}$\theta_3$};
 \draw (5, 1) node {\color{red}$\cdots$};
 \draw (7, 1) node {\color{red}$\theta_m$};
 \draw (9, 1) node {\color{red}$\theta_{m+1}$};
 \draw[-latex] (0.8,0.8) -- (0.2,0.2);
 \draw[-latex] (-0.2,0.2) -- (-0.8,0.8);
 \draw[-latex] (2.8,0.8) -- (2.2,0.2);
 \draw[-latex] (1.8,0.2) -- (1.2,0.8);
 \draw[-latex] (3.8,0.2) -- (3.2,0.8);
 \draw[-latex] (8.8,0.8) -- (8.2,0.2);
 \draw[-latex] (7.8,0.2) -- (7.2,0.8);
 \draw[-latex] (-0.7,0.9) -- (1.7,0.1);
 \draw[-latex] (1.3, 0.9) -- (3.7,0.1);
\end{tikzpicture}
$$

\begin{Remark}\rm
In Section \ref{sec:super-friezes}, we described how a super-frieze corresponds to the $\lambda$-lengths and $\mu$-invariants of a triangulated polygon.

Following our construction for the case of an initial fan triangulation, we see that the super-frieze that we construct compares with the construction of Ovsienko and Shapiro via a quiver of even and odd variables as follows:
The initial data of Ovsienko and Shapiro consists of the following:
\begin{itemize}\itemsep=0pt
 \item for even variables, the $\lambda$-lengths of all the diagonals of a fan triangulation,
 \item for odd variables, the $($modified$)$ $\mu$-invariants
 \[ \sqrt{\lambda_{13}} \; \boxed{123}, \sqrt{\lambda_{14}\lambda_{24}} \, \boxed{124},
 \sqrt{\lambda_{15}\lambda_{25}} \, \boxed{125}, \dots, \sqrt{\lambda_{2n}} \, \boxed{12n}\,.
 \]
\end{itemize}
Note that {unlike our usage of collections of $\mu$-invariants}, these $\mu$-invariants correspond to triangles that \emph{do not} belong to the same triangulation.
\end{Remark}

The results of \cite{OS19} therefore show that all $\lambda$-lengths can be expressed in terms of this initial data using sequences of mutations.

On the other hand, our main theorem (Theorem~\ref{thm:main}) shows that all $\lambda$-lengths can be expressed in terms of initial data, where all $\mu$-invariants
come from the same initial triangulation. From the point of view of cluster algebras, it is more natural to have all initial cluster variables coming from
the same triangulation. This leads to the following open question:

\begin{Question}
 Does there exist some modification of the extended quiver mutation from {\rm \cite{OS19}} which realizes the super Ptolemy transformations?
 This would entail $($at least$)$ the following:
 \begin{enumerate}\itemsep=0pt
 \item[$1.$] Specifying which $2$-paths to include for a given triangulation.
 \item[$2.$] Restricting the $2$-path mutation to be compatible with this choice.
 \item[$3.$] Odd variables must change when mutating at an even vertex.
 \end{enumerate}
\end{Question}

Section~5 of~\cite{sv_19} provides yet another conjectural connection to super cluster algebras. In particular, they consider the coordinate ring of the super Grassmannians $G_{r}\big(\mathbb{R}^{n|m}\big)$ of $r$-planes in $(n+m)$-super-space. In the case of $r=2$, $n=4$ or $5$, and $m=1$, these yield a collection of even variables $T^{ab}$ and odd variables $\theta^c$ where $a$ and $b$ can be identified as vertices of an $n$-gon, and the $\theta^c$'s can be identified as the possible triangles inside a quadrilateral or pentagon, respectively. This yields super-Pl\"ucker relations relating these even and odd variables to one another.

\begin{Question}
Is there an algebraic transformation that relates Shemyakova--Voronov's $T^{ab}$'s and $\theta^c$'s of~{\rm \cite{sv_19}} to our $\lambda_{ab}$'s and $\mu$-invariants so that the super-Pl\"ucker relations are satisfied?
\end{Question}

\appendix
\section{The pentagon relation} \label{appendix}

As we highlighted in Remark~\ref{rem:lambda}, under the use of super Ptolemy relations, the $\lambda$-lengths associated to arcs in a polygon are well-defined. To see this, it is sufficient to (1) observe that two consecutive applications of the super Ptolemy relation yields the original arc, and (2) show that the pentagon relation of five alternating applications of the super Ptolemy relation yields the original two arcs.

For (1), we consider the quadrilateral as in Figure~\ref{fig:super_ptolemy}, and flip the arc $e$.
Note that \[ef = ac + bd + \sqrt{abcd} \sigma \theta.\]
After this flip, if we apply the super Ptolemy relation to the diagonal $f$ in the resulting quadrilateral (and denote the resulting diagonal as $g$), we would have
\[fg = bd + ac + \sqrt{abcd} \sigma' \theta'.\]
However, using relation equation~\eqref{eq:mu_cross}, we can rewrite the latter as
\[fg = bd + ac + \sqrt{abcd} \sigma \theta,\] and solving for $g$, we obtain $g = e$ as desired.

The calculations to prove (2) are considerably more involved, and more easily shown using the notation of $\widetilde{\theta}_i$'s
that appeared in Corollary~\ref{cor:laurent}.

We will examine the sequence of flips illustrated in Figure~\ref{fig:pentagon_flips},
and apply the super Ptolemy relations one-by-one.
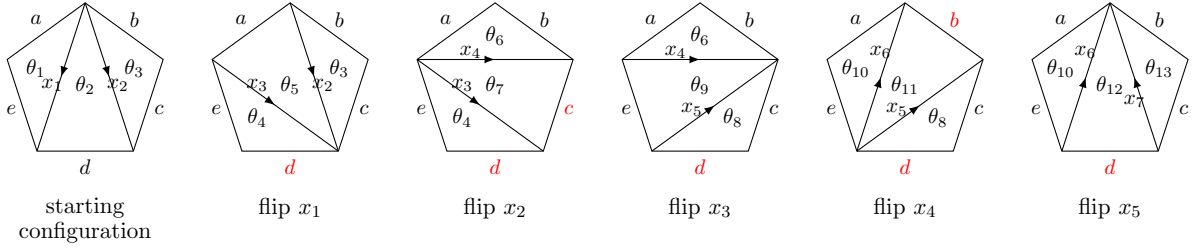
\begin {figure}[h!]
\centering
\begin {tikzpicture}[scale=1.09, every node/.style={scale=0.8}]
 \coordinate (v1) at (0,1);
 \coordinate (v2) at ({cos(pi/2 r + 1*2*pi/5 r)}, {sin(pi/2 r + 1*2*pi/5 r)});
 \coordinate (v3) at ({cos(pi/2 r + 2*2*pi/5 r)}, {sin(pi/2 r + 2*2*pi/5 r)});
 \coordinate (v4) at ({cos(pi/2 r + 3*2*pi/5 r)}, {sin(pi/2 r + 3*2*pi/5 r)});
 \coordinate (v5) at ({cos(pi/2 r + 4*2*pi/5 r)}, {sin(pi/2 r + 4*2*pi/5 r)});

 \draw (v1) -- (v2) -- (v3) -- (v4) -- (v5) -- cycle;

 \draw[->-] (v1) -- (v3);
 \draw[->-] (v1) -- (v4);

 \draw (-0.6,0.8) node {\small $a$};
 \draw (0.6,0.8) node {\small $b$};
 \draw (0.9,-0.3) node {\small $c$};
 \draw (0,-1) node {\small $d$};
 \draw (-0.9,-0.3) node {\small $e$};

 \draw (-0.4,0) node {\small $x_1$};
 \draw (0.4,0) node {\small $x_2$};

 \draw (-0.6,0.2) node {\small $\theta_1$};
 \draw (0,0) node {\small $\theta_2$};
 \draw (0.6,0.2) node {\small $\theta_3$};

 \draw (0,-1.5) node{starting};
 \draw (0,-1.8) node{configuration};


 \begin {scope}[shift={(2.5,0)}]
 \coordinate (v1) at (0,1);
 \coordinate (v2) at ({cos(pi/2 r + 1*2*pi/5 r)}, {sin(pi/2 r + 1*2*pi/5 r)});
 \coordinate (v3) at ({cos(pi/2 r + 2*2*pi/5 r)}, {sin(pi/2 r + 2*2*pi/5 r)});
 \coordinate (v4) at ({cos(pi/2 r + 3*2*pi/5 r)}, {sin(pi/2 r + 3*2*pi/5 r)});
 \coordinate (v5) at ({cos(pi/2 r + 4*2*pi/5 r)}, {sin(pi/2 r + 4*2*pi/5 r)});

 \draw (v1) -- (v2) -- (v3) -- (v4) -- (v5) -- cycle;

 \draw[->-] (v2) -- (v4);
 \draw[->-] (v1) -- (v4);

 \draw (-0.6,0.8) node {\small $a$};
 \draw (0.6,0.8) node {\small $b$};
 \draw (0.9,-0.3) node {\small $c$};
 \draw[red] (0,-1) node {\small $d$};
 \draw (-0.9,-0.3) node {\small $e$};

 \draw (-0.4,0) node {\small $x_3$};
 \draw (0.4,0) node {\small $x_2$};

 \draw (-0.4,-0.4) node {\small $\theta_4$};
 \draw (0,0) node {\small $\theta_5$};
 \draw (0.6,0.2) node {\small $\theta_3$};

 \draw (0,-1.5) node{flip $x_1$};
 \end {scope}


 \begin {scope}[shift={(5,0)}]
 \coordinate (v1) at (0,1);
 \coordinate (v2) at ({cos(pi/2 r + 1*2*pi/5 r)}, {sin(pi/2 r + 1*2*pi/5 r)});
 \coordinate (v3) at ({cos(pi/2 r + 2*2*pi/5 r)}, {sin(pi/2 r + 2*2*pi/5 r)});
 \coordinate (v4) at ({cos(pi/2 r + 3*2*pi/5 r)}, {sin(pi/2 r + 3*2*pi/5 r)});
 \coordinate (v5) at ({cos(pi/2 r + 4*2*pi/5 r)}, {sin(pi/2 r + 4*2*pi/5 r)});

 \draw (v1) -- (v2) -- (v3) -- (v4) -- (v5) -- cycle;

 \draw[->-] (v2) -- (v4);
 \draw[->-] (v2) -- (v5);

 \draw (-0.6,0.8) node {\small $a$};
 \draw (0.6,0.8) node {\small $b$};
 \draw[red] (0.9,-0.3) node {\small $c$};
 \draw[red] (0,-1) node {\small $d$};
 \draw (-0.9,-0.3) node {\small $e$};

 \draw (-0.4,0) node {\small $x_3$};
 \draw (-0.3,0.4) node {\small $x_4$};

 \draw (-0.4,-0.4) node {\small $\theta_4$};
 \draw (0,0) node {\small $\theta_7$};
 \draw (0,0.6) node {\small $\theta_6$};

 \draw (0,-1.5) node{flip $x_2$};
 \end {scope}


 \begin {scope}[shift={(7.5,0)}]
 \coordinate (v1) at (0,1);
 \coordinate (v2) at ({cos(pi/2 r + 1*2*pi/5 r)}, {sin(pi/2 r + 1*2*pi/5 r)});
 \coordinate (v3) at ({cos(pi/2 r + 2*2*pi/5 r)}, {sin(pi/2 r + 2*2*pi/5 r)});
 \coordinate (v4) at ({cos(pi/2 r + 3*2*pi/5 r)}, {sin(pi/2 r + 3*2*pi/5 r)});
 \coordinate (v5) at ({cos(pi/2 r + 4*2*pi/5 r)}, {sin(pi/2 r + 4*2*pi/5 r)});

 \draw (v1) -- (v2) -- (v3) -- (v4) -- (v5) -- cycle;

 \draw[->-] (v3) -- (v5);
 \draw[->-] (v2) -- (v5);

 \draw (-0.6,0.8) node {\small $a$};
 \draw (0.6,0.8) node {\small $b$};
 \draw (0.9,-0.3) node {\small $c$};
 \draw[red] (0,-1) node {\small $d$};
 \draw (-0.9,-0.3) node {\small $e$};

 \draw (-0.1,-0.3) node {\small $x_5$};
 \draw (-0.3,0.4) node {\small $x_4$};

 \draw (0.4,-0.4) node {\small $\theta_8$};
 \draw (0,0) node {\small $\theta_9$};
 \draw (0,0.6) node {\small $\theta_6$};

 \draw (0,-1.5) node{flip $x_3$};
 \end {scope}


 \begin {scope}[shift={(10,0)}]
 \coordinate (v1) at (0,1);
 \coordinate (v2) at ({cos(pi/2 r + 1*2*pi/5 r)}, {sin(pi/2 r + 1*2*pi/5 r)});
 \coordinate (v3) at ({cos(pi/2 r + 2*2*pi/5 r)}, {sin(pi/2 r + 2*2*pi/5 r)});
 \coordinate (v4) at ({cos(pi/2 r + 3*2*pi/5 r)}, {sin(pi/2 r + 3*2*pi/5 r)});
 \coordinate (v5) at ({cos(pi/2 r + 4*2*pi/5 r)}, {sin(pi/2 r + 4*2*pi/5 r)});

 \draw (v1) -- (v2) -- (v3) -- (v4) -- (v5) -- cycle;

 \draw[->-] (v3) -- (v5);
 \draw[->-] (v3) -- (v1);

 \draw (-0.6,0.8) node {\small $a$};
 \draw[red] (0.6,0.8) node {\small $b$};
 \draw (0.9,-0.3) node {\small $c$};
 \draw[red] (0,-1) node {\small $d$};
 \draw (-0.9,-0.3) node {\small $e$};

 \draw (-0.1,-0.3) node {\small $x_5$};
 \draw (-0.3,0.4) node {\small $x_6$};

 \draw (0.4,-0.4) node {\small $\theta_8$};
 \draw (0,0) node {\small $\theta_{11}$};
 \draw (-0.6,0.2) node {\small $\theta_{10}$};

 \draw (0,-1.5) node{flip $x_4$};
 \end {scope}

 \begin{scope}[shift={(12.5,0)}]
 	\coordinate (v1) at (0,1);
 \coordinate (v2) at ({cos(pi/2 r + 1*2*pi/5 r)}, {sin(pi/2 r + 1*2*pi/5 r)});
 \coordinate (v3) at ({cos(pi/2 r + 2*2*pi/5 r)}, {sin(pi/2 r + 2*2*pi/5 r)});
 \coordinate (v4) at ({cos(pi/2 r + 3*2*pi/5 r)}, {sin(pi/2 r + 3*2*pi/5 r)});
 \coordinate (v5) at ({cos(pi/2 r + 4*2*pi/5 r)}, {sin(pi/2 r + 4*2*pi/5 r)});

 \draw (v1) -- (v2) -- (v3) -- (v4) -- (v5) -- cycle;

 \draw[->-] (v3)--(v1);
 \draw[->-] (v4)--(v1);

 \draw (-0.6,0.8) node {\small $a$};
 \draw (0.6,0.8) node {\small $b$};
 \draw (0.9,-0.3) node {\small $c$};
 \draw (0,-1) node {\small ${\color{red} d}$};
 \draw (-0.9,-0.3) node {\small $e$};

 \draw (-0.3,0.4) node {\small $x_6$};
 \draw (0.3,-0.2) node {\small $x_7$};

 \draw (-0.6,0.2) node {\small $\theta_{10}$};
 \draw (0,0) node {\small $\theta_{12}$};
 \draw (0.6,0.2) node {\small $\theta_{13}$};

 \draw (0,-1.5) node{flip $x_5$};
 \end{scope}

\end{tikzpicture}
\caption{Flip sequence to verify the pentagon relation. Red labels indicate reversed orientation.}\label{fig:pentagon_flips}
\end{figure}

Define the ``\emph{modified}'' $\mu$-invariants (as in Corollary~\ref{cor:laurent})
\[ \til_1 = \sqrt{\frac{e}{ax_1}} \theta_1, \qquad \til_2 = \sqrt{\frac{d}{x_1x_2}} \theta_2, \qquad \til_3 =\sqrt{\frac{c}{bx_2}} \theta_3. \]

\textbf{First flip:} After flipping $x_1$, we get $x_3$:
\begin{align*}
 x_3 = \frac{ad + ex_2}{x_1} + \frac{\sqrt{adex_2}}{x_1} \theta_1 \theta_2
 = \frac{ad+ex_2}{x_1} + ax_2 \widetilde{\theta}_1 \widetilde{\theta}_2
\end{align*}
and the new $\theta$'s are
\begin{align*}
 \theta_4 = \frac{\sqrt{ad} \theta_1 - \sqrt{ex_2} \, \theta_2}{\sqrt{x_1x_3}}
 = \frac{1}{\sqrt{dex_3}} \big( ad \widetilde{\theta}_1 - ex_2 \widetilde{\theta}_2 \big)
\end{align*}
and
\begin{align*}
 \theta_5 = \frac{\sqrt{ad} \theta_2 + \sqrt{ex_2} \theta_1}{\sqrt{x_1x_3}}
 = \sqrt{\frac{ax_2}{x_3}} \big( \widetilde{\theta}_1 + \widetilde{\theta}_2 \big).
\end{align*}

\textbf{Second flip:} Flip $x_2$ to get $x_4$:
\begin{align*}
 x_4 = \frac{ac + bx_3}{x_2} + \frac{\sqrt{acbx_3}}{x_2} \theta_5 \theta_3
 = \frac{acx_1+abd+bex_2}{x_1x_2} + ab \big( \widetilde{\theta}_1 \widetilde{\theta}_2 + \widetilde{\theta}_1 \widetilde{\theta}_3 + \widetilde{\theta}_2 \widetilde{\theta}_3 \big).
\end{align*}
The new $\theta$'s are
\begin{align*}
 \theta_6 = \frac{\sqrt{ac} \theta_3 + \sqrt{bx_3} \theta_5}{\sqrt{x_2x_4}}
 = \sqrt{\frac{ab}{x_4}} \big( \widetilde{\theta}_1 + \widetilde{\theta}_2 + \widetilde{\theta}_3 \big)
\end{align*}
and
\begin{align*}
 \theta_7 = \frac{\sqrt{ac} \theta_5 - \sqrt{bx_3} \theta_3}{\sqrt{x_2x_4}}
 = \frac{1}{\sqrt{cx_3x_4}} \big( ac\big(\widetilde{\theta}_1 + \widetilde{\theta}_2\big) - bx_3 \widetilde{\theta}_3 \big).
\end{align*}

{\allowdisplaybreaks \textbf{Third flip:} Flip $x_3$ to get $x_5$:
\begin{align*}
 x_5={} & \frac{ce + dx_4}{x_3} + \frac{\sqrt{cdex_4}}{x_3} \, \theta_4 \theta_7 \\
={} & \frac{cex_1 x_2 + dac x_1 + d a b d + dbex_2}{x_1 x_2 x_3} +
 \frac{dab}{x_3}( \widetilde{\theta}_1\widetilde{\theta}_2 + \widetilde{\theta}_1\widetilde{\theta}_3 +
 \widetilde{\theta}_2 \widetilde{\theta}_3) \\
& {}+ { \frac{1}{x_3^2} (ad \widetilde{\theta}_1 - ex_2 \widetilde{\theta}_2)\left( ac(\widetilde{\theta}_1 + \widetilde{\theta}_2)
 - bx_3 \widetilde{\theta}_3\right) } \\
={} & \left(\frac{ad + ex_2}{x_3}\right)\left(\frac{bd+cx_1}{x_1x_2}\right)
 + \left(\frac{dabx_3 +adac + ex_2 ac}{x_3^2}\right)\widetilde{\theta}_1 \widetilde{\theta}_2 \\
 & {}
 + \left(\frac{dab}{x_3} - \frac{adb}{x_3}\right) \widetilde{\theta}_1 \widetilde{\theta}_3
 + \left(\frac{dab + ex_2 b}{x_3}\right) \widetilde{\theta}_2 \widetilde{\theta}_3 \\
={} &{ \left(\frac{ad + ex_2}{x_3}\right)\left(\frac{bd+cx_1}{x_1x_2}\right)
 + \left(\frac{dab + acx_1}{x_3}\right)\widetilde{\theta}_1 \widetilde{\theta}_2 
 + 
 b x_1 \widetilde{\theta}_2 \widetilde{\theta}_3 }\footnotemark \\
={} & { \left(\frac{ad + ex_2 + a x_1x_2 \widetilde{\theta}_1 \widetilde{\theta}_2}{x_3}\right)\left(\frac{bd+cx_1}{x_1x_2}\right)
 + 
 b x_1 \widetilde{\theta}_2 \widetilde{\theta}_3 } \\
={} & \frac{bd+cx_1}{x_2} + bx_1 \widetilde{\theta}_2 \widetilde{\theta}_3{}^{\ref{footnote1}}
\end{align*}
\footnotetext{\label{footnote1}Here we used $\big(ad + ex_2 + a x_1x_2 \widetilde{\theta}_1\widetilde{\theta}_2\big)/x_3 = x_1$.}
$\!\!$with new $\theta$'s
\begin{align*}
 \theta_8 = \frac{\sqrt{ce} \theta_4 - \sqrt{dx_4} b \theta_7}{\sqrt{x_3x_5}} b
 = \frac{1}{\sqrt{cdx_5}} \left( \frac{-c(ad+ex_2)}{x_3} b \widetilde{\theta}_2 + bd b \widetilde{\theta}_3 \right)
 = \frac{1}{\sqrt{cdx_5}} \big( bd \widetilde{\theta}_3 - cx_1 \widetilde{\theta}_2 \big)
\end{align*}
and
\begin{align*}
 \theta_9 &= \frac{\sqrt{ce} \theta_7 + \sqrt{dx_4} \theta_4}{\sqrt{x_3x_5}}
 = \frac{1}{x_3 \sqrt{ex_4x_5}} \big( a(ec+dx_4) {\widetilde{\theta}_1} + e(ac-x_2x_4) \widetilde{\theta}_2 - be x_3 \widetilde{\theta}_3 \big) \\
 &= {\frac{1}{x_3 \sqrt{ex_4x_5}} \big( \big(a x_3x_5 - aex_2b \til_2\til_3 \big) \widetilde{\theta}_1 - bex_3 \big(\widetilde{\theta}_2 + \widetilde{\theta}_3\big) + eabx_2 \widetilde{\theta}_1 \widetilde{\theta}_2 \widetilde{\theta}_3 \big) }\footnotemark \\
 &= {\frac{1}{\sqrt{ex_4x_5}} \big( a x_5 \widetilde{\theta}_1 - be \big(\widetilde{\theta}_2 + \widetilde{\theta}_3\big) \big). }{}^{\ref{footnote2}}
\end{align*}
\footnotetext{\label{footnote2}Here we used $x_2x_4 = ac + bx_3 + abx_2 \big(\widetilde{\theta}_1+\widetilde{\theta}_2\big)\widetilde{\theta}_3$ and
$x_3x_5 = ce + dx_4 + \big( ac (ad + ex_2)\widetilde{\theta}_1\widetilde{\theta}_2 - ad b x_3 \widetilde{\theta}_1\widetilde{\theta}_1 + ex_2bx_3 \widetilde{\theta}_1\widetilde{\theta}_3\big)/x_3$.}

\textbf{Fourth flip:} Finally, flip $x_4$ to get $x_6$:
\begin{align*}
 x_6 &= \frac{be + ax_5}{x_4} + \frac{\sqrt{abex_5}}{x_4} \theta_9 \theta_6
 \\&= \frac{bex_2 + abd + acx_1}{x_2 x_4}+\frac{abx_1}{x_4} \widetilde{\theta}_2 \widetilde{\theta}_3
 + \frac{ab}{x_4^2} \big( a x_5 \widetilde{\theta}_1 - be \big(\widetilde{\theta}_2 + \widetilde{\theta}_3\big)\big)
 \big(\widetilde{\theta}_1 + \widetilde{\theta}_2 + \widetilde{\theta}_3\big) \\
 &={ \frac{bex_2 + abd + acx_1}{x_2 x_4} + \frac{abx_1}{x_4} \widetilde{\theta}_2 \widetilde{\theta}_3
 + \frac{ab}{x_4} \left( \frac{a x_5 + be}{x_4}\right) \big(\widetilde{\theta}_1 \widetilde{\theta}_2 + \widetilde{\theta}_1 \widetilde{\theta}_3\big) } \\
 &= { x_1 - \frac{abx_1}{x_4} \big(\widetilde{\theta}_1 \widetilde{\theta}_2 + \widetilde{\theta}_1 \widetilde{\theta}_3 + \widetilde{\theta}_2 \widetilde{\theta}_3 \big) + \frac{abx_1}{x_4} \widetilde{\theta}_2 \widetilde{\theta}_3
 + \frac{ab}{x_4} \left( \frac{a x_5 + be}{x_4}\right)
 \big(\widetilde{\theta}_1 \widetilde{\theta}_2 + \widetilde{\theta}_1 \widetilde{\theta}_3\big)} \footnotemark \\
 &= { x_1 - \frac{abx_1}{x_4} \big(\widetilde{\theta}_1 \widetilde{\theta}_2 + \widetilde{\theta}_1 \widetilde{\theta}_3 + \widetilde{\theta}_2 \widetilde{\theta}_3 \big) + \frac{abx_1}{x_4} \widetilde{\theta}_2 \widetilde{\theta}_3} \\
 &\phantom{=} \ {} + { \frac{ab}{x_4} \left(x_1 - \frac{abx_1}{x_4} (\widetilde{\theta}_1 \widetilde{\theta}_2 + \widetilde{\theta}_1 \widetilde{\theta}_3 + \widetilde{\theta}_2 \widetilde{\theta}_3) + \frac{abx_1}{x_4} \widetilde{\theta}_2 \widetilde{\theta}_3
\right)
 \big(\widetilde{\theta}_1 \widetilde{\theta}_2 + \widetilde{\theta}_1 \widetilde{\theta}_3\big)} {}^{\ref{footnote3}} \\
 &= {x_1 - \frac{abx_1}{x_4} \big(\widetilde{\theta}_1 \widetilde{\theta}_2 + \widetilde{\theta}_1 \widetilde{\theta}_3 + \widetilde{\theta}_2 \widetilde{\theta}_3 \big) + \frac{abx_1}{x_4} \widetilde{\theta}_2 \widetilde{\theta}_3
 + \frac{abx_1 }{x_4} \big(\widetilde{\theta}_1 \widetilde{\theta}_2 + \widetilde{\theta}_1 \widetilde{\theta}_3\big)} \\
 &= x_1.
\end{align*}
\footnotetext{\label{footnote3}Here we used $x_1 = {(acx_1+abd+bex_2)}/{x_2x_4} + {ab x_1}\big( \widetilde{\theta}_1 \widetilde{\theta}_2 + \widetilde{\theta}_1 \widetilde{\theta}_3 + \widetilde{\theta}_2 \widetilde{\theta}_3 \big)/{x_4} $.}
$\!\!$The new $\theta$'s are
\begin{align*}
\theta_{10}&=\frac{\theta_6\sqrt{be}+\theta_9\sqrt{ax_5}}{\sqrt{x_4x_6}}\\
&=\sqrt{be\over x_4x_6}\cdot\sqrt{ab\over x_4}\big(\til_1+\til_2+\til_3\big)+\sqrt{ax_5\over x_4x_6}\cdot\sqrt{1\over{ex_4x_5}}\big(ax_5\til_1-be\til_2-be\til_3\big)\\
&=\sqrt{a\over ex_6}\left(be+ax_5\over x_4\right)\til_1\footnotemark
\\
&=\sqrt{a\over ex_6}x_6\til_1\\
&=\sqrt{ax_1\over e}\til_1\footnotemark\\
&=\theta_1
\end{align*}
and
\addtocounter{footnote}{-1}
\footnotetext{Here we used the non-obvious fact that $\theta_9\theta_6\theta_1=0$.}
\addtocounter{footnote}{+1}
\footnotetext{We use the equality $x_6=x_1$ here.}
\begin{align*}
\theta_{11}&=\frac{\theta_9\sqrt{be}-\theta_6\sqrt{ax_5}}{\sqrt{x_4x_6}}\\
&={\sqrt{b}\over x_4\sqrt{x_5x_6}}\big( a x_5 \widetilde{\theta}_1 - be \big(\widetilde{\theta}_2 + \widetilde{\theta}_3\big) \big)-{a\sqrt{bx_5}\over x_4\sqrt{x_6}}\big(\til_1+\til_2+\til_3\big)\\
&=-\sqrt{b\over x_5x_6}\left(be+ax_5\over x_4\right)\big(\til_2+\til_3\big)\footnotemark\\
&=-\sqrt{b x_6\over x_5}\big(\til_2+\til_3\big).
\end{align*}
\footnotetext{Similar to above, here we made use of $\theta_9\theta_6(\theta_2+\theta_3)=0$.}

\textbf{Fifth flip:} Finally, we flip $x_5$ to $x_7$ and get back to the original triangulation (with different orientation):
\begin{align*}
x_7&=	\frac{bd+cx_6}{x_5} + \frac{\sqrt{bcdx_6}}{x_5}\theta_8\theta_{11}\\
&=\frac{bd+cx_6}{x_5}+{\sqrt{bcdx_6}\over x_5}\cdot
\frac{1}{\sqrt{cdx_5}} \big( bd \widetilde{\theta}_3 - cx_1 \widetilde{\theta}_2 \big)\left(-\sqrt{b x_6\over x_5}\big(\til_2+\til_3\big)\right)\\
&=\frac{bd+cx_1}{x_5}+{bx_1\over x_5}\left(\frac{bd+cx_1}{x_5}\right)\til_2\til_3\\
&=\left(\frac{bd+cx_1}{x_5}\right)\left(1+{bx_1\over x_5}\til_2\til_3\right)\\
&=\left(x_2-\frac{bx_1x_2}{x_5}\til_2\til_3\right)\left(1+{bx_1\over x_5}\til_2\til_3\right)\footnotemark\\
&=x_2+\left(-{bx_1x_2\over x_5}+{bx_1x_2\over x_5}\right)\til_2\til_3\\
&=x_2.
\end{align*}
\footnotetext{\label{fn4}We use $x_2x_5=bd+cx_1+bx_1x_2x_5\til_2\til_3$ here.}
$\!\!$Finally, the new $\theta$'s are
\begin{align*}
\theta_{12}&={\theta_{11}\sqrt{bd}+\theta_8\sqrt{cx_1}\over {\sqrt{x_5x_2}}}\\
&=-\left({bd+cx_1}\over{x_5} \right)\sqrt{x_6\over x_7d}\ \til_2\\	
&=-\left(x_2-\frac{bx_1x_2}{x_5}\til_2\til_3\right)\sqrt{x_1\over x_2d}\ \til_2 {}^{\ref{fn4},\ref{fn5} }	\\
&=-\sqrt{x_2x_1\over d}\ \til_2\\
&=-\theta_2,
\\
\theta_{13}&={\theta_{8}\sqrt{bd}-\theta_{11}\sqrt{cx_1}\over {\sqrt{x_5x_2}}}\\
&=\frac{\sqrt{b}}{x_5\sqrt{cx_7}}\big(bd\til_3-cx_1\til_2\big)+\frac{x_1\sqrt{bc}}{x_5\sqrt{x_7}}\big(\til_2+\til_3\big)\\
&=\left(bd+cx_1\over x_5\right)\sqrt{b\over cx_2}\ \til_3 \footnotemark\\
&=\left(x_2-\frac{bx_1x_2}{x_5}\til_2\til_3\right)\sqrt{b\over cx_2}\ \til_3{}^{\ref{fn4} }\\
&=\sqrt{x_2b\over c}\ \til_3\\
&=\theta_3.
\end{align*}
\footnotetext{Here we use $x_7=x_2$.\label{fn5}}

We see in the end that $x_6 = x_1$, $x_7=x_2$. Note also that the edge $x_6$, $x_7$ are oriented opposite of $x_1$, $x_2$, and so the $\lambda_{ij}$'s
are independent of the orientations. We also see that we get the same $\mu$-invariants (up to sign), as $\theta_{10} = \theta_1$,
$\theta_{12} = -\theta_2$, and $\theta_{13} = \theta_3$.

}

\subsection*{Acknowledgements}

The authors would like to thank the support of the NSF grant DMS-1745638 and the University of Minnesota UROP program. We would also like to thank Misha Shapiro and Leonid Chekhov for inspiring conversations, as well as the anonymous referees for their helpful feedback.

\pdfbookmark[1]{References}{ref}
\LastPageEnding

\end{document}